\newdimen\myMargin
\newcommand{\A}{{\cal A}}
\newcommand{\K}{\mathbb K}
\newcommand{\F}{\mathbb F}
\newcommand{\Z}{\mathbb Z}
\newcommand{\chara}{\mathrm{char}}
\newtheorem{theorem}{Theorem}
\newtheorem{lemma}[theorem]{Lemma}
\newtheorem{corollary}[theorem]{Corollary}
\newtheorem{proposition}[theorem]{Proposition}
\newtheorem{definition}[theorem]{Definition}
\newtheorem{algorithm}{Algorithm}
\newtheorem{fact}[theorem]{Fact}
\theoremstyle{remark}
\theoremstyle{remark}\newtheorem{remark}[theorem]{Remark}
\title{Explicit equivalence of quadratic forms over $\mathbb{F}_q(t)$}
\author{\normalsize
 \begin{minipage}{0.3\linewidth}
    \large
    G\'abor Ivanyos \\
    \footnotesize
Institute for Computer Science and Control,
Hungarian Acad. Sci. \\
    \texttt{Gabor.Ivanyos@sztaki.mta.hu} \\
    \normalsize
  \end{minipage}
  \qquad
 \begin{minipage}{0.3\linewidth}
    \large
    P\'eter Kutas \\
    \footnotesize
Institute for Computer Science and Control,
Hungarian Acad. Sci. and
    Central European University, Department of Mathematics and its Applications \\
    \texttt{Kutas\_Peter@phd.ceu.edu} \\
    \normalsize
  \end{minipage}
  \qquad
  \begin{minipage}{0.3\linewidth}
    \large
    Lajos R\'onyai \\
    \footnotesize
     Institute for Computer Science and Control,
     Hungarian Acad. Sci. \\
    Dept. of Algebra, Budapest
    Univ. of Technology and Economics \\
    \texttt{lajos@ilab.sztaki.hu}
    \normalsize
  \end{minipage}
}
\begin{document}

\maketitle

\begin{abstract}
We propose a randomized polynomial time algorithm for computing non-trivial
zeros of quadratic forms in 4 or more variables over $\mathbb{F}_q(t)$, where
$\mathbb{F}_q$ is a finite field of odd characteristic. The algorithm is based
on a suitable splitting of the form into two forms and finding a common value
they both represent. We make use of an effective formula for the number of
fixed degree irreducible polynomials in a given residue class. We apply our
algorithms for computing a Witt decomposition of a quadratic form, for
computing an explicit isometry between quadratic forms and finding zero
divisors in quaternion algebras over quadratic extensions of $\F_q(t)$. 
\end{abstract}

\bigskip
\noindent
{\textbf{ Keywords}:} Quadratic forms, Function field,
Polynomial time algorithm.

\bigskip
\noindent
{\textbf{Mathematics Subject Classification:} 68W30, 11E12, 11E20.
\section{Introduction}

In this paper we consider algorithmic questions concerning quadratic forms
over $\mathbb{F}_q(t)$ where $q$ denotes an odd prime power. The main focus is
on the problem of finding a non-trivial zero of a quadratic form. The
complexity of the problem of finding non-trivial zeros of quadratic forms in
three variables has already been considered in (\cite{Cremona1},\cite{IKR}).
However the same problem concerning quadratic forms of higher dimensions
remained open.

Similarly, in the the case of quadratic forms over $\mathbb{Q}$, the
algorithmic problem of finding non-trivial zeros of 3-dimensional forms was
considered in several papers (\cite{CR},\cite{IS}) and afterwards Simon and
Castel proposed an algorithm for finding non-trivial zeros of quadratic forms
of higher dimensions (\cite{Simon},\cite{Castel}). The algorithms for the
low-dimensional cases (dimension 3 and 4) run in polynomial time if one is
allowed to call oracles for integer factorization. Surprisingly, the case
where the quadratic form is of dimension at least 5, Castel's algorithm runs
in polynomial time without the use of oracles. Note that, by the classical
Hasse-Minkowski theorem, a 5-dimensional quadratic form over $\mathbb{Q}$ is
always isotropic if it is indefinite.

Here we consider the question of isotropy of quadratic forms in 4 or more
variables over $\mathbb{F}_q(t)$. The main idea of the algorithm is to split
the form into two forms and find a common value they both represent. Here we
apply two important facts. There is an effective bound on the number of
irreducible polynomials in an arithmetic progression of a given degree. An
asymptotic formula, which is effective for large $q$, was proven by Kornblum
\cite{Kornblum}, but for our purposes, we apply a version with a much better
error term, due to Rhin \cite[Chapter 2, Section 6, Theorem 4]{Rhin}. However,
that statement is slightly more general; hence we cite a specialized version
from \cite{Wan}. A short survey on the history of this result can be found in
\cite[Section 5.3.]{EH}. The other fact we use is the local-global principle
for quadratic forms over $\F_q(t)$ due to Rauter \cite{Rauter}. 

Finally we solve these two equations separately using the algorithm from
\cite{Cremona1} and our Algorithm 1 in the 5-variable case. In the
4-dimensional case we are also able to detect if a quadratic form is
anisotropic; note that a 5-dimensional form over $\mathbb{F}_q(t)$ is always
isotropic. The algorithms are randomized and run in polynomial time. We also
give several applications of these algorithms. Most importantly, we propose an
algorithm which computes a transition matrix of two equivalent quadratic
forms.

The paper is divided into five sections. Section 2 provides theoretical and
algorithmic results concerning quadratic forms over fields. Namely, we give a
general introduction over arbitrary fields and then over $\mathbb{F}_q(t)$,
which is followed by a version of the Gram-Schmidt orthogonalization
procedure which gives control of the size of the output. 


In Section 3 we present the crucial ingredients of our algorithms. In Section
4 we describe the main algorithms and analyze their running time and the size
of their output. In Section 5 we use the main algorithms to compute explicit
equivalence of quadratic forms. In the final section we apply our results to
find zero divisors in quaternion algebras over quadratic extensions of
$\mathbb{F}_q(t)$ or, equivalently, to find zeros of ternary quadratic forms
over quadratic extensions of $\mathbb{F}_q(t)$. The material of this part is
the natural analogue of that presented in \cite{K} over quadratic number
fields.

\section{Preliminaries}
This section is divided into five parts. 
The first recalls the basics of the algebraic 
theory of quadratic forms and quadratic spaces over an arbitrary field
 of characteristic different from 2. In the second part we give a brief overview of
valuations of the field $\mathbb{F}_q(t)$ where $q$ denotes an odd prime power.
The third part is devoted to some results about quadratic forms over 
$\mathbb{F}_q(t)$ that we will use later on. It is followed by a discussion
of a version of the Gram-Schmidt orthogonalization procedure over $\mathbb{F}_q(t)$ with
complexity analysis. The section is concluded
with some known algorithmic results about finding non-trivial zeros of binary and ternary quadratic 
forms over $\mathbb{F}_q(t)$.

\subsection{Quadratic forms over fields}

\label{subsec:qf-gen}

This subsection is based on Chapter I of \cite{Lam}. 
Here $\F$ will denote a field such that $\chara~ \F\neq 2$.

A {\em quadratic form} over $\F$ is a homogeneous polynomial $Q$ of degree two
in $n$ variables  $x_1,\ldots,x_n$ for some $n$. Two quadratic forms
are called {\em equivalent} if they can be obtained from each other by a
homogeneous linear change of the variables. By such a change we mean 
that each variable $x_j$ is substituted by
the polynomial $\sum_{i=1}^nb_{ij}x_i$ ($j=1,\ldots,n$). 
The $n\times n$ matrix $B=(b_{ij})$ over $\F$ has to be invertible 
as otherwise there is no appropriate substitution in the reverse
direction. The {\em matrix} of $Q$ is the unique symmetric
$n$ by $n$ matrix $A=(a_{ij})$ with
$Q(x_1,\ldots,x_n)=\sum_{i=1}^n\sum_{j=1}^n a_{i,j}x_ix_j$. We will also refer to this as the Gram matrix of the quadratic form. The {\em determinant} of a quadratic form is the determinant of its matrix. 
We call $Q$ {\em regular} if its matrix has non-zero determinant
and {\em diagonal} if its matrix is diagonal. We say that
$Q$ is {\em isotropic} if the equation $Q(x_1,\ldots,x_n)=0$ admits
a non-trivial solution and {\em anisotropic} otherwise. Two quadratic forms 
with Gram matrices $A_1$ and $A_2$ are then {\em equivalent} if 
and only if there exists an invertible $n$ by $n$ matrix $B\in M_n(\F)$, 
such that $A_2=B^TA_1B$; equivalently,
$A_1={B^{-1}}^TA_2B^{-1}$. Here $B$ is just the matrix of the change of
variables defined above. We will use the term {\em transition matrix}
for such a $B$. Two regular unary quadratic forms $ax^2$ and $bx^2$
are equivalent if and only if $a/b$ is a square in $\F^*$. In other 
words, equivalence classes of regular unary quadratic forms correspond
to the elements of the factor group $\F^*/(\F^*)^2$.

Every quadratic form is equivalent to a diagonal one, see the
discussion of Gram-Schmidt orthogonalization in the context
of quadratic spaces below and in 
Subsection~\ref{subsec:Gram-Schmidt}. A regular
diagonal quadratic form $Q(x_1,x_2)=a_1x_1^2+a_2x_2^2$ is isotropic
if and only if $-a_2/a_1$ is a square in $\F^*$. Binary quadratic
forms that are regular and isotropic at the same time are called
{\em hyperbolic}. If $(\beta_1,\beta_2)$ is a non-trivial
zero of $Q$ then $\gamma=2(a_1\beta_1^2-a_2\beta_2^2)$ is non-zero
and the substitution $x_1\leftarrow \beta_1x_1+\frac{\beta_1}{\gamma}x_2$,
$x_2\leftarrow \beta_2x_1-\frac{\beta_2}{\gamma}x_2$ provides an equivalence
of $Q$ with the form $x_1x_2$. Another, diagonal standard hyperbolic
is $x_1^2-x_2^2$. The standard forms $x_1x_2$ and $x_1^2-x_2^2$
are equivalent via the substitution  
$x_1\leftarrow \frac{1}{2}x_1 + \frac{1}{2}x_2$,
$x_2\leftarrow \frac{1}{2}x_1 - \frac{1}{2}x_2$; the inverse of this substitution is $x_1\leftarrow x_1+x_2$,
$x_2\leftarrow x_1-x_2$.

A regular ternary quadratic form is equivalent to a diagonal
form $c(ax_1^2+bx_2^2-abx_3^2)$ for some $a,b,c\in \F^*$.
To see this, note that the diagonal form 
$a_1x_1^2+a_2x_2^2+
a_3x_3^2$ is equivalent to 
$$
-a_1a_2a_3(
\frac{-a_1}{a_1a_2a_3}x_1^2+
\frac{-a_2}{a_1a_2a_3}x_2^2-
\frac{a_1a_2}{(a_1a_2a_3)^2}x_3^2)=
a_1x_1^2+a_2x_2^2+\frac{1}{a_3}x_3^2
$$
via the substitution $x_3\rightarrow \frac{1}{a_3}x_3$.
A related object is the {\em quaternion algebra} $H_\F(a,b)$ with $a,b\in\mathbb{F}^*$.
This is the associative algebra over $\F$ with identity element, generated by
$u$ and $v$ with defining relations $u^2=a$, $v^2=b$, $uv=-vu$. 
It can be readily seen that $H_\F(a,b)$ is a four-dimensional
algebra over $\F$ with basis $1,u,v,uv$ whose center is the subalgebra
consisting of the multiples of $1$. It is also known that 
$H_\F(a,b)$ is either a division algebra or it is isomorphic
to the full $2$ by $2$ matrix algebra over $\F$. Any non-zero element $z$
of $H_\F(a,b)$ with $z^2=0$ can be written as a linear
combination of $u,v$ and $uv$. Also, 
$(\alpha_1u+\alpha_2v+\alpha_3uv)^2=(a\alpha_1^2+b\alpha_2^2-
ab\alpha_3^2)1$, where $\alpha_1,\alpha_2,\alpha_3\in\F$. Hence finding a 
non-zero nilpotent element $z$ of
$H_\F(a,b)$ is equivalent to computing a non-trivial
zero of the quadratic form $ax_1^2+bx_2^2-abx_3^2$.
In particular, isotropy of $ax_1^2+bx_2^2-abx_3^2$
is equivalent to $H_\F(a,b)$ being isomorphic to a full
matrix algebra. 

It will be convenient to present certain parts of this paper in the framework
of {\em quadratic spaces}. These offer a coordinate-free approach to quadratic
forms. A quadratic space over $\F$ is a pair $(V,h)$ consisting of a vector
space $V$ over $\F$ and a symmetric bilinear function $h:V\times V\rightarrow
\F$. Throughout this paper all vector spaces will be finite dimensional. To a
quadratic form $Q$ having Gram matrix $A$ the associated bilinear function $h$ is 
$h(u,v)=u^TAv$ for $u,v\in\F^n$. Conversely, if $(V,h)$ is an $n$-dimensional
quadratic space, then, for any basis $v_1,\ldots,v_n$, we can define its {\it
Gram matrix} $A=(a_{ij})$ with respect to the given basis by putting
$a_{ij}=h(v_i,v_j)$. Then $Q(x_1,\ldots,x_n)={\underline x}^TA{\underline x}$
is a quadratic form where $\underline x$ stands for the column vector
$(x_1,\ldots,x_n)^T$. The quadratic form obtained from $h$ using
another basis will be a form equivalent to $Q$. Let $(V,h)$ and $(V',h')$ be
quadratic spaces. Then a linear bijection $\phi:V\rightarrow V'$ is an {\em
isometry} if $h'(\phi(v_1),\phi(v_2))=h(v_1,v_2)$ for every $v_1,v_2\in V$. We
say that $(V,h)$ and $(V',h')$ are {\it isometric} if there is an isometry
$\phi:V\rightarrow V'$. Equivalent quadratic forms give isometric quadratic
spaces and to isometric quadratic spaces equivalent quadratic forms are
associated. Moreover, the following holds. Let $(V,h)$ and $(V',h')$ be
quadratic spaces. Let $v_1,\dots,v_n$ be a basis of $V$ and let
$v_1',\dots,v_n'$ be a basis of $V'$. Suppose that $\phi$ is an isometry
between $V$ and $V'$. Then $\phi(v_i)=\sum_{j=1}^n b_{ij}v_j'$ where
$b_{ij}\in\F$. Let $A$ be the Gram matrix of $h$ in the basis $v_1,\dots,v_n$
and let $A'$ be the Gram matrix of $h'$ in the basis $v_1',\dots,v_n'$. If
$B\in M_n(\F)$ is equal to the matrix $(b_{ij})$ then $A=B^TA'B$. 


Let $(V,h)$ be a quadratic space. We say that two vectors $u$ and $v$ from $V$
are {\em orthogonal} if $h(u,v)=0$. An orthogonal basis is a basis consisting
of pairwise orthogonal vectors. The well-known Gram-Schmidt orthogonalization
procedure provides an algorithm for constructing orthogonal bases. We will
discuss some details in the context of quadratic spaces over $\F_q(t)$ in
Subsection~\ref{subsec:Gram-Schmidt}. With respect to an orthogonal basis, the
Gram matrix is diagonal. Therefore the Gram-Schmidt procedure gives a way of
computing diagonal forms equivalent to given quadratic forms. The {\em
orthogonal complement} of a subspace $U\leq V$ is the subspace
$$U^\perp=\{v:h(u,v)=0\mbox{~for every~}u\in U\}.$$ The subspace $V^\perp$ is
called the {\em radical} of $(V,h)$; here, $(V,h)$ is called regular if its radical
is zero. A quadratic space is regular if and only if at least one of, or
equivalently, each of the quadratic forms associated to it using various bases
is regular. 

The {\em orthogonal sum} of $(V,h)$ and $(V',h')$ is the quadratic space
$(V\oplus V', h\oplus h')$ where $h\oplus
h'((v_1,v_1'),(v_2,v_2'))=h(v_1,v_2)+h'(v_1',v_2')$; here, $v_1,v_2\in V$ and
$v_1',v_2'\in V'$. The inner version of this is a decomposition of $V$ into
the direct sum of two subspaces $V$ and $V'$ with $V\leq V'^\perp$ and $V'\leq
V^\perp$. An orthogonal basis gives a decomposition into the orthogonal sum of
one-dimensional quadratic spaces.

A non-zero vector in a quadratic space is called {\em isotropic} if it is
orthogonal to itself. Isotropic vectors correspond to non-trivial zeros of
quadratic forms. A quadratic space is isotropic if it admits isotropic vectors
and {\em anisotropic} otherwise. A quadratic space $(V,h)$ is {\em totally
isotropic} if $h$ is identically zero on $V\times V$. This is equivalent to
that every non-zero vector in $V$ is isotropic; here, $\chara~\F\neq 2$.
Every subspace $U\leq V$ in a quadratic space $(V,h)$ is also a quadratic
space with the restriction of $h$ to $U$. A subspace of $V$ is called
isotropic, anisotropic, totally isotropy if it is isotropic,
anisotropic, totally isotropic as a quadratic space with the restriction of
$h$. A quadratic space can be decomposed as an orthogonal sum of a totally
isotropic subspace, necessarily the radical of the whole space, and a
regular space, which can actually be any of the direct complements of the
radical. A two-dimensional quadratic space is called a {\em hyperbolic plane}
if it is regular and isotropic. Such spaces correspond to hyperbolic binary
forms.

\begin{theorem}[Witt]\label{Witt}
Let $(V,h)$ be a quadratic space over $\F$. Then $V$ can be decomposed as the
orthogonal sum of $V_0$, a totally isotropic space, $V_h$, which is an
orthogonal sum of hyperbolic planes, and an anisotropic space $V_a$. Such a
decomposition is called a Witt decomposition of $(V,h)$ and the number
$\frac{1}{2}\dim(V_h)$ is called the Witt index of $(V,h)$. Here $V_0$ is the
radical. The Witt index and the isometry class of the anisotropic part $V_a$
do not depend on the particular Witt decomposition. In turn, two quadratic
spaces are isometric if and only if their radicals have the same dimension,
their Witt indices coincide and their anisotropic parts are isometric.
\end{theorem}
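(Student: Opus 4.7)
The plan is to handle existence and uniqueness separately; existence is constructive, while uniqueness reduces to a cancellation lemma for regular quadratic spaces.

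For existence I would first set $V_0 = V^\perp$, the radical, which is canonically totally isotropic. Pick any direct complement $W$ of $V_0$ in $V$; the discussion earlier in this subsection shows that $(W, h|_W)$ is regular and $V = V_0 \oplus W$ as an orthogonal sum. Now induct on $\dim W$ to peel off hyperbolic planes. If $W$ is anisotropic, set $V_a = W$ and $V_h = 0$. Otherwise take an isotropic $v \in W$; by regularity of $W$ some $u \in W$ satisfies $h(u,v) \neq 0$. The plane $H = \langle u, v\rangle$ has Gram matrix of determinant $-h(u,v)^2 \neq 0$, so $H$ is regular, and containing the isotropic $v$ it is a hyperbolic plane. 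Regularity of $H$ gives $W = H \oplus (H^\perp \cap W)$ with $H^\perp \cap W$ again regular, and the induction hypothesis finishes the existence half.

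For uniqueness I plan to establish Witt's cancellation: if $U \oplus W_1$ is isometric to $U \oplus W_2$ with $U$ regular, then $W_1 \cong W_2$. Decomposing $U$ via Gram–Schmidt into one-dimensional orthogonal summands reduces this to $\dim U = 1$, say $U = \langle v\rangle$ with $h(v,v) \neq 0$. Given an isometry $\phi : \langle v\rangle \oplus W_1 \to \langle v\rangle \oplus W_2$, put $w = \phi(v)$, so $h(w,w) = h(v,v)$. The key subclaim is that there is an isometry of the codomain sending $w$ to $v$: the identity $h(v-w,v-w) + h(v+w,v+w) = 4 h(v,v) \neq 0$ forces one of $v \pm w$ to be non-isotropic, and a suitable product of the reflections $\tau_x(y) = y - 2\frac{h(y,x)}{h(x,x)}x$ through $v-w$, $v+w$, and $v$ produces the required isometry. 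Composing with $\phi$ yields an isometry $\psi$ fixing $v$, which restricts to an isometry $W_1 \to W_2$ because isometries preserve orthogonal complements.

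Given cancellation, uniqueness of the Witt decomposition follows formally: two decompositions of $V$ have the same $V_0$ (it equals the radical); after cancelling $V_0$ and then the common number of hyperbolic planes, one compares further copies of hyperbolic planes orthogonally summed with an anisotropic space to a purely anisotropic space, which forces the remaining hyperbolic part to be trivial. Both Witt indices and the isometry classes of $V_a$ and $V_a'$ thus agree. The isometry criterion in the last sentence of the theorem is then immediate in both directions: isometric spaces have the same invariants by what was just proved, while conversely, given matching radicals, Witt indices and anisotropic parts, one assembles an isometry by orthogonal summation. The principal obstacle is the subclaim inside cancellation, namely building an isometry of the whole space that exchanges two vectors of equal non-zero norm; the reflection argument requires the case split on which of $v \pm w$ is non-isotropic and, in one case, an additional reflection $\tau_v$ to correct the sign, and this is where the use of $\chara\,\F \neq 2$ (implicit in the definition of reflections) is essential.
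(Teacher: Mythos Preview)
Your argument is correct and is essentially the standard textbook proof of Witt's theorem via reflections and cancellation. Note, however, that the paper does not supply its own proof of this statement at all: it simply cites \cite[Chapter~I, Theorem~4.1]{Lam}. Your existence argument (split off the radical, then inductively peel off hyperbolic planes from a regular complement) and your uniqueness argument (reduce Witt cancellation to the one-dimensional case and use the reflection $\tau_{v\pm w}$, with an extra $\tau_v$ in the $v+w$ case) are precisely the approach taken in Lam's book, so your proposal is in line with the reference the paper defers to. The only phrasing to tighten is ``cancelling $V_0$'': since $V_0$ is the radical and hence not regular, Witt cancellation does not apply to it directly; what you actually use is that any two linear complements of the radical are isometric (both carry the induced form on $V/V_0$), after which cancellation of hyperbolic planes in the resulting regular space goes through as you describe.
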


A proof of this theorem can be found in \cite[Chapter I, Theorem 4.1.]{Lam}.
There is another interpretation of the Witt index concerning totally isotropic
subspaces.

\begin{proposition}
Let $(V,h)$ be a regular quadratic space with Witt index $m$. 
Then the dimension of every maximal totally isotropic subspace is $m$.
\end{proposition}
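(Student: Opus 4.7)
The plan is to establish both $\dim W \leq m$ for every totally isotropic subspace $W$ and, separately, that any totally isotropic $W$ with $\dim W < m$ fails to be maximal. Both directions hinge on the fact that every nonzero isotropic vector of a regular quadratic space lies in a hyperbolic plane, together with its iterated version: every $k$-dimensional totally isotropic subspace is contained in an orthogonal sum of $k$ mutually orthogonal hyperbolic planes inside $V$.

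I would prove the iterated version as a lemma, by induction on $k=\dim W$. For $k=1$, given a nonzero isotropic $w_1$, regularity of $V$ provides some $u\in V$ with $h(w_1,u)\neq 0$; after rescaling so that $h(w_1,u)=1$ and replacing $u$ by $u-\tfrac{1}{2}h(u,u)w_1$, the pair $w_1,u$ spans a hyperbolic plane $H_1$. For the inductive step, pick a basis $w_1,\ldots,w_k$ of $W$, form $H_1=\langle w_1,u_1\rangle$ as just described, and adjust each $w_i$ ($i\geq 2$) to $w_i'=w_i-h(w_i,u_1)w_1$; one verifies that the $w_i'$ remain linearly independent, lie in $H_1^\perp$, and still pairwise satisfy $h(w_i',w_j')=0$, so they span a totally isotropic $W'\leq H_1^\perp$ of dimension $k-1$. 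Since $H_1$ is regular we have $V=H_1\oplus H_1^\perp$ orthogonally, and $H_1^\perp$ is regular; applying the induction hypothesis to $W'\leq H_1^\perp$ yields hyperbolic planes $H_2,\ldots,H_k$ inside $H_1^\perp$ whose orthogonal sum contains $W'$, so $H_1,\ldots,H_k$ collectively contain $W$.

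With the lemma in hand, the upper bound is immediate: given an orthogonal sum $H_1\oplus\cdots\oplus H_k$ of hyperbolic planes containing $W$, decomposing its orthogonal complement via Theorem~\ref{Witt} produces a Witt decomposition of $V$ with at least $k$ hyperbolic planes, so the invariance of the Witt index forces $k\leq m$. For the other direction, selecting one isotropic line from each of the $m$ hyperbolic planes in any fixed Witt decomposition $V=V_h\oplus V_a$ produces a totally isotropic subspace of dimension exactly $m$, which is maximal by the bound just proved. Finally, if $W$ is totally isotropic of dimension $k<m$, the lemma embeds $W$ into $H_1\oplus\cdots\oplus H_k$, whose orthogonal complement in $V$ is regular with Witt index $m-k>0$ (again by uniqueness); this complement therefore contains a nonzero isotropic vector $w'$, and $W+\langle w'\rangle$ is a strictly larger totally isotropic subspace, so $W$ was not maximal. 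The main technical obstacle is the inductive step of the lemma, where one must simultaneously control orthogonality to $H_1$, total isotropy of the modified vectors, and their linear independence; everything else reduces cleanly to Witt's theorem.
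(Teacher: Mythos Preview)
Your argument is correct and is essentially the standard textbook proof: embed a $k$-dimensional totally isotropic subspace into an orthogonal sum of $k$ hyperbolic planes by the inductive ``peel off a hyperbolic plane'' construction, then invoke the uniqueness part of Witt's theorem to compare with the Witt index. The paper does not give its own proof of this proposition; it simply cites \cite[Chapter~I, Corollary~4.4]{Lam}, and your argument is exactly the kind of proof one finds there, so there is nothing to contrast.
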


The proof of this proposition can be found in \cite[Chapter I, Corollary
4.4.]{Lam}. By the following fact, the Witt decomposition has implications to
equivalence of quadratic forms. 

\begin{proposition}
Two regular quadratic spaces $(V,h)$ and $(V',h')$ 
having the same dimension are isometric
if and only if the orthogonal sum of $(V,h)$ and $(V',-h')$
can be decomposed as an orthogonal sum of hyperbolic planes.
\end{proposition}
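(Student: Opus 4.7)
The plan is to rely on Theorem~\ref{Witt} and handle the two directions separately.

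For the forward implication, assume $(V,h)\cong(V',h')$. Then $(V,h)\oplus(V',-h')\cong(V,h)\oplus(V,-h)$, so it suffices to decompose $(V,h)\oplus(V,-h)$ into hyperbolic planes. Pick an orthogonal basis $v_1,\ldots,v_n$ of $(V,h)$ (available via Gram-Schmidt, since $(V,h)$ is regular), and set $a_i:=h(v_i,v_i)\neq 0$. In $(V,h)\oplus(V,-h)$ the two-dimensional subspace spanned by $(v_i,0)$ and $(0,v_i)$ has Gram matrix $\mathrm{diag}(a_i,-a_i)$, so it is regular, and it contains the isotropic vector $(v_i,v_i)$; therefore it is a hyperbolic plane. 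These $n$ planes are pairwise orthogonal and span the whole $2n$-dimensional space.

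For the converse, assume $(V,h)\oplus(V',-h')$ is an orthogonal sum of hyperbolic planes. Write $(V,h)=H_1\oplus V_a$ and $(V',h')=H_2\oplus V_a'$ as Witt decompositions (with no radical, since both spaces are regular), where $H_1,H_2$ are orthogonal sums of hyperbolic planes and $V_a,V_a'$ are anisotropic. Then
$$(V,h)\oplus(V',-h')\;\cong\;\bigl(H_1\oplus(-H_2)\bigr)\oplus\bigl(V_a\oplus(-V_a')\bigr),$$
and $-H_2$ is still a sum of hyperbolic planes because negating the form preserves regularity and isotropic vectors. By the uniqueness part of Theorem~\ref{Witt}, and since the left-hand side is hyperbolic, the summand $V_a\oplus(-V_a')$ must also be hyperbolic. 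Granting for the moment that this forces $V_a\cong V_a'$, the equality $\dim V=\dim V'$ then yields matching Witt indices, and Theorem~\ref{Witt} delivers $(V,h)\cong(V',h')$.

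The main obstacle is the remaining claim: if $V_a,V_a'$ are anisotropic and $V_a\oplus(-V_a')$ is hyperbolic, then $V_a\cong V_a'$. I would prove it by picking a maximal totally isotropic subspace $U\le V_a\oplus(-V_a')$, which has dimension $\frac12(\dim V_a+\dim V_a')$ by hyperbolicity, and considering the two coordinate projections $\pi:U\to V_a$ and $\pi':U\to V_a'$. Both are injective, since any kernel element would be an isotropic vector of $V_a$ or $V_a'$. Hence $\dim U\le\min(\dim V_a,\dim V_a')$, which forces $\dim V_a=\dim V_a'$ and both projections to be bijective. Setting $\phi:=\pi'\circ\pi^{-1}:V_a\to V_a'$, for any $v_1,v_2\in V_a$ the points $(v_j,\phi(v_j))$ lie in $U$, and the total isotropy of $U$ gives $h(v_1,v_2)-h'(\phi(v_1),\phi(v_2))=0$, exhibiting $\phi$ as the required isometry.
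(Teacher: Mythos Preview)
Your proof is correct. The paper itself does not prove this proposition; it merely cites \cite[Proposition 2.46.]{Gerstein}. So there is no ``paper's proof'' to compare against directly.

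It is worth noting, however, that your key idea for the converse --- taking a maximal totally isotropic subspace of the hyperbolic space $V_a\oplus(-V_a')$, projecting to the two factors, and reading off an isometry from the fact that both projections are bijections --- is precisely the construction the paper employs later, in the proof of Theorem~\ref{equivalence}, to \emph{algorithmically} produce an isometry between two anisotropic spaces once their orthogonal sum has been decomposed into hyperbolic planes. There the authors pick the basis $b_1,b_3,\ldots,b_{2n-1}$ of a totally isotropic half of the hyperbolic decomposition, project each $b_i$ to $u_i\in V_1$ and $v_i\in V_2$, and verify that $u_i\mapsto v_i$ extends to an isometry, with injectivity of the projections following from anisotropy. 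Your argument is the clean abstract version of that same mechanism.

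One minor remark: when you assert that ``the summand $V_a\oplus(-V_a')$ must also be hyperbolic'', the cleanest justification is Witt cancellation (implicit in Theorem~\ref{Witt}): the whole space is hyperbolic, $H_1\oplus(-H_2)$ is hyperbolic, so the orthogonal complement $V_a\oplus(-V_a')$ has trivial anisotropic part and is therefore hyperbolic. You essentially say this, but it may be worth making the cancellation step explicit.
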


The proof of this proposition can be found in \cite[Proposition
2.46.]{Gerstein}.

Thus, deciding isotropy of quadratic spaces or, equivalently, deciding
equivalence of quadratic forms can be reduced to
computing Witt decompostions. In Chapter 5 we will show that
such a reduction exists even for computing isometries 
and explicitly for computing transition matrices.

\subsection{Valuations and completions of $\mathbb{F}_q(t)$}

\label{subsec:val-fct}

We recall some facts about valuations (\cite{Neukirch}). A {\em discrete} (exponential) {\em valuation} of a field $K$
is a map $v:K\rightarrow \Z\cup\{\infty\}$ such that for every $a,b\in K$,
(1) $v(a)=\infty$ if and only
if $a=0$, (2) $v(ab)=v(a)+v(b)$ and (3) $v(a+b)\geq \min\{v(a),v(b)\}$.
A valuation is called trivial if $v(a)$ is identically zero on $K\setminus\{0\}$.
Let $v$ be a non-trivial discrete valuation of $K$ and let $r$ be any  
real number greater than one. Then $d_{v,r}(a,b)=r^{-v(a-b)}$ 
is a metric on $K$. The topology induced on $K$ by this metric does not 
depend on the choice of $r$ and will also remain the same if we replace 
$v$ with a multiple by any positive integer. Let $K_v$ be the completion of $K$ 
with respect to any of the metrics $d_{v,r}$. The natural extension
of the field operations $K_v$ makes $K_v$ a field. Furthermore, 
a natural extension of $v$ is a discrete valuation of $K_v$. 
The elements $a$ of $K$ with $v(a)\geq 0$  form a subring of $K$,
called the {\em valuation ring} corresponding to $v$. The valuation ring
is a local ring in which every ideal is a power of the maximal ideal,
called the {\em valuation ideal}, consisting
 of the elements $a$ with $v(a)>0$. The {\em residue field}
is the factor of the valuation ring by the valuation ideal.

We define the degree of a non-zero rational function from $\mathbb{F}_q(t)$ as
the difference of the degrees of its numerator and denominator. Together with
the convention that the degree of the zero polynomial is $-\infty$, the
negative of the degree function, that is, the degree of the denominator minus
the degree of the numerator, gives a discrete valuation of $\mathbb{F}_q(t)$.
This is the {\em valuation at infinity}. All the other non-trivial valuations
are associated to irreducible polynomials from $\mathbb{F}_q[t]$ via the
following construction (\cite[Theorem 3.15.]{Gerstein}). If $f(t)\in
\mathbb{F}_q[t]$ is an irreducible polynomial, then we can define $v_f(h(t))$
as the difference of the multiplicities of $f(t)$ in the denominator and
numerator of $h(t)$. We will denote by $\mathbb{F}_q(t)_{(f)}$ the completion
of $\mathbb{F}_q(t)$ with respect to $v_f$. As an example, for $f(t)=t$,
$\mathbb{F}_q(t)_{(t)}$ is isomorphic to the field of Laurent series in $t$
over $\mathbb{F}_q$ and the valuation ring inside this consists of the power
series in $t$. We remark that the valuation at infinity can be obtained in a
similar way: Put $t'=1/t$. Then every non-zero polynomial $g(t)\in
\mathbb{F}_q[t]$ can be written as ${t'}^{-\deg g(t)}$ times a polynomial from
$\mathbb{F}_q[t']$ with non-zero constant term. It follows that the degree of
a rational function in $t$ coincides with the difference of the exponents of
the highest power of $t'$ dividing a pair polynomials in $t'$ expressing the
same function as a fraction. This implies that the completion of $\F_q(t)$
with respect to the negative of the degree function is $\F_q(\frac{1}{t})$,
the field of formal Laurent series in $\frac{1}{t}$. 

We  refer to equivalence classes of valuations as {\em primes} of 
$\mathbb{F}_q(t)$. The term {\em infinite prime} or {\em infinity} is
used for valuations equivalent to the negative of the degree, while the {\em finite} primes
correspond to the monic irreducible polynomials of $\mathbb{F}_q[t]$. We shall refer to certain properties satisfied at the
completion corresponding to a prime, for example, isotropy of a quadratic form
over $\mathbb{F}_q(t)$,
 as behaviors {\em at the prime}.

\subsection{Quadratic forms over $\mathbb{F}_q(t)$}

\label{subsec:qf-fct}

In this subsection we recall some basic facts about quadratic forms over
$\mathbb{F}_q(t)$ and its completions, where $q$ is an odd prime power.
The main focus is on the question of isotropy of such forms. We start with two
useful facts concerning quadratic forms over finite fields. The first
one was already established earlier in Section 2.1.
\begin{fact}\label{fact1}
\begin{itemize}

\item[\rm(1)] Let $a_1x_1^2+a_2x_2^2$ be a non-degenerate quadratic form over
a field $\mathbb{F}$. Then it is isotropic if and only if $-a_1a_2$ is a
square in $\mathbb{F}$.

\item[\rm(2)] Every non-degenerate quadratic form over $\mathbb{F}_q$ with at
least three variables is isotropic.
\end{itemize}
\end{fact}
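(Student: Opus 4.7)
The form is isotropic iff there exist $x_1,x_2$, not both zero, with $a_1x_1^2+a_2x_2^2=0$. Neither coordinate can vanish in a non-trivial zero (since $a_1,a_2\neq 0$), so I may divide to obtain $(x_1/x_2)^2=-a_2/a_1$. Thus isotropy is equivalent to $-a_2/a_1\in (\F^*)^2$, and multiplying by the square $a_1^2$ this is equivalent to $-a_1a_2\in(\F^*)^2$. This is essentially the calculation already recorded in Subsection~\ref{subsec:qf-gen}, so I would simply cite it.

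\textbf{Plan for Part (2).} By Gram--Schmidt (available via the orthogonalisation discussion in Subsection~\ref{subsec:qf-gen}), any non-degenerate form is equivalent to a diagonal form $a_1x_1^2+\dots+a_nx_n^2$ with all $a_i\in\F_q^*$; since restricting to the first three variables still yields a non-degenerate form, it suffices to prove the statement for $n=3$. So the task reduces to showing that for $a_1,a_2,a_3\in\F_q^*$ the equation
\[
a_1x_1^2+a_2x_2^2=-a_3
\]
has a solution in $\F_q^2$ (then $(x_1,x_2,1)$ is the desired non-trivial zero).

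\textbf{Pigeonhole step.} Consider the two subsets of $\F_q$
\[
S_1=\{a_1x_1^2 : x_1\in\F_q\},\qquad S_2=\{-a_3-a_2x_2^2 : x_2\in\F_q\}.
\]
Because $q$ is odd, the squaring map on $\F_q$ is $2$-to-$1$ on $\F_q^*$ and sends $0\mapsto 0$, so $(\F_q^*)^2$ has $(q-1)/2$ elements and $\{y^2:y\in\F_q\}$ has $(q+1)/2$ elements. Multiplication by the non-zero scalars $a_1$ and $-a_2$, and translation by $-a_3$, are bijections, hence $|S_1|=|S_2|=(q+1)/2$. Therefore $|S_1|+|S_2|=q+1>q=|\F_q|$, so $S_1\cap S_2\neq\emptyset$, producing the required $(x_1,x_2)$.

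\textbf{Main obstacle.} There is no real obstacle; the only subtlety is to ensure one extracts a \emph{non-trivial} zero, which is automatic from the choice $x_3=1$, and to remember that over odd-characteristic $\F_q$ exactly $(q+1)/2$ elements are squares. The diagonalisation step uses $\chara\F\neq 2$, which holds by assumption since $q$ is odd.
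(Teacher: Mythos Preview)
Your argument is correct. Note, however, that the paper does not actually supply a proof of this statement: it is recorded as a \emph{Fact}, with part~(1) simply cross-referenced to the earlier observation in Subsection~\ref{subsec:qf-gen} and part~(2) left as a standard result (implicitly from \cite{Lam}). What you have written is precisely the classical proof one would give---the direct manipulation for the binary case and the pigeonhole argument on the value sets $\{a_1x_1^2\}$ and $\{-a_3-a_2x_2^2\}$ for the ternary case over $\F_q$---so there is nothing to compare against, and your write-up would serve perfectly well as the omitted justification.
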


\begin{remark}\label{square2}
If $\mathbb{F}=\mathbb{F}_q$ then to check whether an element $s\neq 0$ in
$\mathbb{F}$ is a square or not, compute $s^{\frac{q-1}{2}}$ and check
whether it is 1 or -1. Hence due to Fact \ref{fact1} there is a deterministic
polynomial time algorithm for checking whether $a_1x_1^2+a_2x_2^2=0$ is
solvable over $\mathbb{F}_q$ or not.
\end{remark}

Now we turn our attention to quadratic forms over $\mathbb{F}_q(t)$ and their
completions. The first lemma deals with quadratic forms in three variables.

\begin{lemma}\label{Lemma1}
 Let $a_1,a_2,a_3\in\mathbb{F}_q[t]$ be non-zero polynomials. Let $f$ be a
 monic irreducible polynomial. Let $\mathbb{F}_q(t)_{(f)}$ denote the $f$-adic
 completion of $\mathbb{F}_q(t)$. Let $v_f(a_i)$ denote the multiplicity of
 $f$ in the prime decomposition of $a_i$. 
\begin{enumerate}

\item[\rm(1)] If $v_f(a_1)\equiv v_f(a_2)\equiv v_f(a_3) ~(mod~ 2)$ then the equation
$a_1x_1^2+a_2x_2^2+a_3x_3^2=0$ is solvable in $\mathbb{F}_q(t)_{(f)}$.

\item[\rm(2)] Suppose that not all the $v_f(a_i)$ have the same parity, and 
 that
$v_f(a_i)\equiv v_f(a_j) ~(mod~ 2)$. Then the equation
$a_1x_1^2+a_2x_2^2+a_3x_3^2=0$ is solvable in $\mathbb{F}_q(t)_{(f)}$ if and
only if $-f^{-v_f(a_ia_j)}a_ia_j$ is a square modulo $f$.

\end{enumerate}
\end{lemma}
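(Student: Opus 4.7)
The plan is to reduce to the case $v_f(a_i)\in\{0,1\}$ and then combine Fact~\ref{fact1} with Hensel's lemma on the residue field $\overline{\mathbb F_q[t]/(f)}\cong\mathbb F_{q^{\deg f}}$.

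First, I would observe that substituting $x_i\leftarrow f^{k_i}x_i$ multiplies $a_i$ by $f^{2k_i}$, so it changes $v_f(a_i)$ by an even integer without affecting isotropy over $\mathbb F_q(t)_{(f)}$. Hence we may assume each $v_f(a_i)\in\{0,1\}$. Moreover, scaling the whole equation by a unit (a power of $f$) does not change isotropy, so in case (1) we may further assume all three $v_f(a_i)=0$, while in case (2) we may assume $v_f(a_i)=v_f(a_j)=0$ and $v_f(a_k)=1$ (where $k$ is the remaining index). Under this normalization the condition in (2) becomes simply that $-a_ia_j$ is a square modulo $f$.

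For case (1): all $a_i$ are units in the valuation ring. Reducing mod $f$ gives a non-degenerate ternary form over the finite residue field $\mathbb F_q[t]/(f)$, which is isotropic by Fact~\ref{fact1}(2). Let $(\overline{\beta_1},\overline{\beta_2},\overline{\beta_3})$ be a non-trivial zero, with (WLOG) $\overline{\beta_1}\neq 0$. Lift the $\beta_i$ to the valuation ring, fix $\beta_2,\beta_3$, and consider $g(X)=a_1X^2+a_2\beta_2^2+a_3\beta_3^2$. Then $g(\beta_1)\equiv 0\pmod f$ and $g'(\beta_1)=2a_1\beta_1$ is a unit (here we use that $\chara \mathbb F_q$ is odd), so Hensel's lemma lifts $\beta_1$ to an honest root in $\mathbb F_q(t)_{(f)}$.

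For case (2), direction $(\Leftarrow)$: if $-a_ia_j$ is a square mod $f$, then Fact~\ref{fact1}(1) together with the same Hensel argument applied to the binary form $a_iy_i^2+a_jy_j^2$ yields a non-trivial zero of this binary form over the completion; setting $x_k=0$ gives a zero of the ternary form.

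Direction $(\Rightarrow)$, the main technical step: suppose $(x_1,x_2,x_3)$ is a non-trivial zero in $\mathbb F_q(t)_{(f)}$. The valuations of the three terms are $2v_f(x_i)$, $2v_f(x_j)$ (both even) and $1+2v_f(x_k)$ (odd). For the sum to vanish, the minimum valuation must be attained at least twice, which forces $v_f(x_i)=v_f(x_j)=:m/2\le\frac{1+2v_f(x_k)}{2}$. Dividing the equation by $f^m$, the third term stays in the valuation ideal while the first two have unit coefficients $y_i:=x_i/f^{m/2}$ and $y_j:=x_j/f^{m/2}$; reducing mod $f$ gives $a_i\bar y_i^2+a_j\bar y_j^2\equiv 0\pmod f$ with $\bar y_i,\bar y_j$ nonzero in the residue field. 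Hence $-a_i/a_j\equiv(\bar y_j/\bar y_i)^2$ is a square mod $f$, and multiplying by $a_j^2$ shows $-a_ia_j$ is a square mod $f$, as required. The main obstacle is precisely this valuation-counting argument, since one must recognize that the parity mismatch forces the two even-valuation terms to balance each other modulo $f$.
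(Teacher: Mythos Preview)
Your proof is correct and follows essentially the same route as the paper: normalize the valuations to lie in $\{0,1\}$, invoke isotropy of ternary forms over the finite residue field together with Hensel's lemma for part~(1), and for part~(2) use Hensel on the binary subform for sufficiency and a parity/valuation argument for necessity. The paper is terser---it cites \cite[Chapter~VI, Corollary~2.5]{Lam} for~(1) and calls the necessity in~(2) ``trivial since otherwise the equation is not solvable modulo~$f$''---whereas you spell out the Hensel lift and the valuation-counting explicitly; one small wording slip is calling a power of $f$ a ``unit'', but the intended manipulation (scaling the equation by a nonzero element of the completion) is clear and correct.
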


\begin{proof}
First assume that all $v_f(a_i)$ have the same parity. By a change of
variables, we may assume
that either $v_f(a_i)=0$ for all $i$ or $v_f(a_i)=1$. In the second case we
can divide the equation by $f$ so we may assume that none of the $a_i$ are
divisible by $f$. We obtain an equivalent form whose coefficients are units in
$\mathbb{F}_q(t)_{(f)}$. An equation $a_1x_1^2+a_2x_2^2+a_3x_3^2=0$ where all
$a_i$ are units in $\mathbb{F}_q(t)_{(f)}$ is solvable by \cite[Chapter VI,
Corollary 2.5.]{Lam}.

Now we turn to the second claim. By a change of variables we may assume that
all the $a_i$ are square-free. This results in two cases. Either $f$ divides
exactly one of the $a_i$ or $f$ divides exactly two of the $a_i$. First we
consider the case where $f$ divides exactly one, say $a_1$ (hence now
$v_f(a_2)=v_f(a_3)=0$ and $v_f(a_1)=1$).

The necessity of $-a_2a_3$ being a square modulo $f$ is trivial since
otherwise the equation $a_1x_1^2+a_2x_2^2+a_3x_3^2=0$ is not solvable modulo
$f$. Now
assume that $-a_2a_3$ is a square modulo $f$. This implies that
$-\frac{a_2}{a_3}$ is a square as well. Note that $-\frac{a_2}{a_3}$ is a unit
in $\mathbb{F}_q(t)_{(f)}$. Hence, by Hensel's lemma, $-\frac{a_2}{a_3}$ is a
square in $\mathbb{F}_q(t)_{(f)}$. Now solvability follows
from Fact \ref{fact1}.

Now let us consider the case where $f$ divides exactly two coefficients, say
$a_2$ and $a_3$. We apply the following change of variables: $x_2\leftarrow
x_2/f$ and $x_3\leftarrow x_3/f$. Now we have the equivalent equation
$a_1x_1^2+a_2(x_2/f)^2+a_3(x_3/f)^2=0$. We multiply this equation by $f$ and
get the equation $fa_1x_1^2+a_2/fx_2^2+a_3/fx_3^2=0$. This equation is
solvable in $\mathbb{F}_q(t)_{(f)}$ if and only if $\frac{-a_2a_3}{f^2}$ is a
square modulo $f$ by the previous point, since $f$ only divides one of the
coefficients.
\end{proof}

The previous lemma characterized solvability at a finite prime. The next one
considers the question of solvability at infinity.
\begin{lemma}\label{Lemma1'}
Let $a_1,a_2,a_3\in\mathbb{F}_q[t]$ be non-zero polynomials. Then the following hold:
\begin{enumerate}[\rm(1)]
\item If the degrees of the $a_i$ all have the same parity then the equation $a_1x_1^2+a_2x_2^2+a_3x_3^2=0$ admits a non-trivial solution in $\mathbb{F}_q((\frac{1}{t}))$.
\item Assume that not all of the degrees of the $a_i$ have the same parity. Also assume that $deg(a_i)\equiv deg(a_j)~(mod~ 2)$. Let $c_i$ and $c_j$ be the leading coefficients of $a_i$ and $a_j$ respectively. Then the equation $a_1x_1^2+a_2x_2^2+a_3x_3^2=0$ has a non-trivial solution in $\mathbb{F}_q((\frac{1}{t}))$ if and only if $-c_ic_j$ is a square in $\mathbb{F}_q$.
\end{enumerate}
\end{lemma}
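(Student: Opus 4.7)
The plan is to reduce both parts to Lemma~\ref{Lemma1} by exploiting the isomorphism $\mathbb{F}_q((1/t)) \cong \mathbb{F}_q((t'))$, where I write $t' := 1/t$. Under this identification the valuation at infinity becomes the $(t')$-adic valuation, the residue field is $\mathbb{F}_q$, and any nonzero $a(t) = c_d t^d + \cdots + c_0 \in \mathbb{F}_q[t]$ of degree $d$ with leading coefficient $c_d$ can be written as $a = t'^{-d}\tilde{a}$, where $\tilde{a} := c_d + c_{d-1} t' + \cdots + c_0 t'^d \in \mathbb{F}_q[t']$ has constant term (that is, residue modulo $t'$) equal to $c_d$.

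To apply Lemma~\ref{Lemma1} I normalize the form by substituting $x_i \leftarrow t^{-\lceil d_i/2\rceil} x_i$, where $d_i := \deg(a_i)$. A short computation shows that the new coefficient $a_i t^{-2\lceil d_i/2\rceil}$ equals $\tilde{a}_i$, a unit with residue $c_i$, when $d_i$ is even, and $t'\tilde{a}_i$, a uniformizer times a unit with residue $c_i$, when $d_i$ is odd. In particular the parity of the $(t')$-adic valuation of the normalized coefficient matches the parity of $d_i$. For part (1), when all $d_i$ share a common parity, rescaling the whole equation by an appropriate power of $t$ turns the three coefficients into units of $\mathbb{F}_q[[t']]$; the resulting ternary form is isotropic by \cite[Chapter VI, Corollary 2.5]{Lam}.

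For part (2), the hypothesis places us directly in the setting of Lemma~\ref{Lemma1}(2) with $f = t'$: two of the normalized coefficients have $v_{t'}$ of one parity and the third of the opposite parity. The criterion from that lemma, that $-t'^{-v_{t'}(a_i'a_j')}a_i'a_j'$ be a square modulo $t'$, simplifies (the $t'$-contributions in $a_i'a_j'$ cancel exactly against the correction $t'^{-v_{t'}(a_i'a_j')}$) to the requirement that $-\tilde{a}_i\tilde{a}_j$ be a square modulo $t'$. Reducing to the residue field yields the stated condition that $-c_ic_j$ be a square in $\mathbb{F}_q$.

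The main obstacle is the valuation bookkeeping, i.e.\ tracking how the substitution in the $x_i$ and the subsequent scaling of the equation alter the valuations and residues of the coefficients, and confirming that the residues of the normalized coefficients are exactly the leading coefficients $c_i$. Once this is in hand, the argument is a direct transcription of Lemma~\ref{Lemma1} across the identification of the completion at infinity with $\mathbb{F}_q((t'))$, and it uses the same inputs: Hensel's lemma for detecting squares among units and Lam's VI.2.5 for isotropy of unit ternary forms over a non-dyadic complete discretely valued field with finite residue field.
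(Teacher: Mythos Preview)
Your proof is correct and follows essentially the same approach as the paper: set $u=1/t$, perform a change of variables so that the transformed coefficients lie in $\mathbb{F}_q[u]$ with $u$-adic valuation of the same parity as $\deg a_i$ and residue $c_i$, and then invoke Lemma~\ref{Lemma1} with $f=u$. The only cosmetic difference is that the paper uses the substitution $x_i\leftarrow y_i u^{d_i}$ (giving $v_u(a_i')=d_i$ on the nose) whereas you normalize to $v_{t'}\in\{0,1\}$; both choices feed equally well into Lemma~\ref{Lemma1}.
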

\begin{proof}
Let $u=1/t$ and $d_i=deg(a_i)$. Substitute $x_i\leftarrow y_iu^{d_i}$. The coefficient of $y_i^2$ becomes $a_i'=u^{2d_i}a_i$. Notice that $a_i'=u^{d_i}b_i$ where $b_i$ is a polynomial in $u$ with non-zero constant term $c_i$. It follows that $v_{u}(a_i')=d_i$ and the residue of $u^{-d_i}a_i$ modulo $u$ is $c_i$. Thus both statements follow from Lemma \ref{Lemma1} applied to $f=u$ in $\mathbb{F}_q[u]$.

\end{proof}
\begin{remark}\label{iso}
A four-dimensional form is always isotropic at infinity if three of its coefficient have the same degree parity. Indeed, let $a_i$ be the coefficient whose degree parity is different. Then setting $x_i=0$ and applying Lemma \ref{Lemma1'}, (1) implies the desired result.  
\end{remark}

The next lemmas deal with local solvability of quadratic forms in 4 variables.

\begin{lemma}\label{Lemma1''}
Let $a_1,a_2,a_3,a_4\in\mathbb{F}_q[t]$ be square-free polynomials. Let
$f\in\mathbb{F}_q[t]$ be a monic irreducible dividing exactly two of the
coefficients, $a_i$ and $a_j$. Let the other two coefficients be $a_k$ and
$a_l$. Then the equation $a_1x_1^2+a_2x_2^2+a_3x_3^2+a_4x_4^2=0$ is solvable
in $\mathbb{F}_q(t)_{(f)}$ if and only if at least one of the two conditions
holds:
\begin{enumerate}[\rm(1)]
\item $-a_ka_l$ is a square modulo $f$;
\item $-(a_i/f)(a_j/f)$ is a square modulo $f$.
\end{enumerate}
\end{lemma}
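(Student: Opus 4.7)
The plan is to prove both directions by reducing the 4-variable problem modulo $f$ to two 2-variable problems over the residue field, exploiting the split of the form into a unit part and a uniformizer part. Write $Q = Q_k + f Q_i$, where $Q_k = a_k x_k^2 + a_l x_l^2$ has unit coefficients, and $Q_i = (a_i/f) x_i^2 + (a_j/f) x_j^2$ also has unit coefficients (here we use that $a_i, a_j$ are square-free and divisible by $f$, while $a_k, a_l$ are square-free and coprime to $f$). Conditions (1) and (2) are precisely the isotropy conditions for $\bar Q_k$ and $\bar Q_i$ over the residue field $\mathbb{F}_q[t]/(f)$, via Fact~\ref{fact1}(1).

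For sufficiency, I would proceed as in the proof of Lemma~\ref{Lemma1}. If (1) holds, then $-a_k/a_l$ reduces to a square modulo $f$, and since it is a unit in $\mathbb{F}_q(t)_{(f)}$, Hensel's lemma lifts it to a square in $\mathbb{F}_q(t)_{(f)}$, making $Q_k$ isotropic there, and hence $Q$ isotropic. If (2) holds, the same argument shows $Q_i$ is isotropic in $\mathbb{F}_q(t)_{(f)}$, and then $a_i x_i^2 + a_j x_j^2 = f \cdot Q_i$ is also isotropic there, so $Q$ is too.

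For the converse, take a non-trivial zero $(x_1, x_2, x_3, x_4) \in \mathbb{F}_q(t)_{(f)}^4$. Rescale so that all $x_m$ lie in the valuation ring $\mathcal{O}_f$ and at least one is a unit. From the equation
\[
a_k x_k^2 + a_l x_l^2 = -(a_i x_i^2 + a_j x_j^2),
\]
the right-hand side has $v_f$-valuation $\geq 1$, so reducing modulo $f$ gives $\bar a_k \bar x_k^2 + \bar a_l \bar x_l^2 \equiv 0 \pmod{f}$. I would now split into two cases. If at least one of $\bar x_k, \bar x_l$ is nonzero, this is a nontrivial isotropy of $\bar Q_k$, giving condition (1). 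Otherwise $v_f(x_k), v_f(x_l) \geq 1$, and then the term $a_k x_k^2 + a_l x_l^2$ has $v_f \geq 2$; dividing the original equation by $f$ yields
\[
(a_i/f) x_i^2 + (a_j/f) x_j^2 + a_k(x_k^2/f) + a_l(x_l^2/f) = 0,
\]
whose last two terms still lie in $\mathcal{O}_f$ and reduce to $0$ modulo $f$. Since at least one $x_m$ is a unit and $x_k, x_l$ are not, one of $\bar x_i, \bar x_j$ is nonzero, giving a nontrivial isotropy of $\bar Q_i$ modulo $f$ and hence condition (2).

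The main point that needs care is the correct rescaling and the bookkeeping of $v_f$-valuations when dividing through by $f$ in the second case; this is the step where one might accidentally introduce negative valuations. Once the valuations are tracked properly, the argument is a straightforward case analysis, and no appeal to a result heavier than Fact~\ref{fact1}(1) and Hensel's lemma (already used in Lemma~\ref{Lemma1}) is needed. In effect, the statement is a special case of Springer's theorem for $2\oplus 2$ splittings over a complete discretely valued field of odd residue characteristic, adapted to the present normalization where the "uniformizer block" is rescaled by $f$.
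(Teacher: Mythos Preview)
Your proof is correct and follows essentially the same approach as the paper: split the form into a unit block and a uniformizer block, use Hensel lifting for sufficiency, and a valuation-based case analysis on a primitive integral zero for necessity. Your necessity argument is in fact a bit more streamlined than the paper's --- you collapse the analysis into the two cases ``at least one of $x_k,x_l$ is a unit'' versus ``neither is,'' whereas the paper enumerates separately how many of the four coordinates are divisible by $f$ --- but the underlying idea is identical.
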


\begin{proof}
First we prove that if any of these conditions hold, the equation is locally
solvable at $f$. If the first condition holds we apply Lemma \ref{Lemma1} to
show the existence of a non-trivial solution with $x_i=0$. If the second
condition holds we apply the following change of variables: $x_i\leftarrow
x_i/f, x_j\leftarrow x_j/f$. With these variables we have the following
equation: 
$$ 
a_i(x_i/f)^2+a_j(x_j/f)^2+a_kx_k^2+a_lx_l^2=0. 
$$ 

By multiplying this equation by $f$ we get an equation where the coefficients
of $x_i$ and $x_j$ are not divisible by $f$ and the the other two are. Now
applying Lemma \ref{Lemma1} again proves the result.

Now we prove the converse. If the equation
$a_1x_1^2+a_2x_2^2+a_3x_3^2+a_4x_4^2=0$ has a solution in
$\mathbb{F}_q(t)_{(f)}$ then it has a solution in the valuation ring of
$\mathbb{F}_q(t)_{(f)}$. We denote this ring by $O$. Let $u_1,u_2,u_3,u_4\in
O$ be a solution satisfying that not all of them are divisible by $f$. Let us
consider the equation modulo $f$:
\begin{equation}
a_1u_1^2+a_2u_2^2+a_3u_3^2+a_4u_4^2\equiv 0~ (mod~ f).
\end{equation}

The rest of the proof is divided into subcases depending on how many of
$u_1,u_2,u_3,u_4$ are divisible by $f$.

If none are divisible by $f$ then we get that $a_ku_k^2+a_lu_l^2\equiv 0 ~(mod
~f)$. Therefore $-a_ka_l$ is a square modulo $f$.

Assume that $f$ divides exactly one of the $u_r$. If $r=i$ or $r=j$ we again
have that $a_ku_k^2+a_lu_l^2\equiv 0 ~(mod~ f)$, so $-a_ka_l$ is again a
square modulo $f$. Observe that $r$ cannot be $k$ or $l$ since then equation
(1) would not be satisfied.

Now consider the case where $f$ divides exactly two of the $u_r$. If $f$
divides $u_i$ and $u_j$ we have again that $a_ku_k^2+a_lu_l^2\equiv 0 ~(mod~
f)$. The next subcase is when $f$ divides exactly one of $u_i$ and $u_j$, and
exactly one of $u_k$ and $u_l$. Assume that $u_i$ and $u_k$ are the ones
divisible by $f$. This cannot happen since then
$a_iu_i^2+a_ju_j^2+a_ku_k^2+a_lu_l^2\equiv a_lu_l^2 ~(mod ~f)$ and hence the
left-hand side of equation (1) would not be divisible by $f$. Finally assume
that $u_k$ and $u_l$ are divisible by $f$. Let $u_k':=u_k/f$ and
$u_l':=u_l/f$. We have that $a_1u_1^2+a_2u_2^2+a_3u_3^2+a_4u_4^2=0$. We divide
this equation by $f$ and obtain the equation
$(a_i/f)u_i^2+(a_j/f)u_j^2+fa_ku_k'^2+fa_lu_l'^2=0$. We have already seen that
this implies that $-(a_i/f)(a_j/f)$ is a square modulo $f$.

Now suppose that three of the $u_r$ are divisible by $f$. Observe that $u_k$
and $u_l$ must be divisible by $f$ since otherwise (1) would not be satisfied.
Assume that $u_i$ is not divisible by $f$. However, this cannot happen,
because $a_1u_1^2+a_2u_2^2+a_3u_3^2+a_4u_4^2 \equiv a_iu_i^2 \not\equiv 0 ~
(mod ~ f^2 )$. 
\end{proof}




The next lemma is the version of Lemma \ref{Lemma1''} at infinity.
\begin{lemma}\label{Lemma1'''}
Let $a_1,a_2,a_3,a_4\in\mathbb{F}_q[t]$ be square-free polynomials. Assume that $a_i$ and $a_j$ are of even degree and the other two, $a_k$ and $a_l$ are of odd degree. Let $c_m$ be the leading coefficient of $a_m$ for $m=1\dots 4$. Then the quadratic form $a_1x_1^2+a_2x_2^2+a_3x_3^2+a_4x_4^2$ is anisotropic in $\mathbb{F}_q((\frac{1}{t}))$ if and only if
$-c_ic_j$ and $-c_kc_l$ are both non-squares in $\mathbb{F}_q$.
\end{lemma}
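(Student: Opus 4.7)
The plan is to reduce the statement to Lemma~\ref{Lemma1''} applied at the finite prime $f=u$ of $\mathbb{F}_q[u]$, where $u=1/t$, exactly as Lemma~\ref{Lemma1'} was deduced from Lemma~\ref{Lemma1}. Since $\mathbb{F}_q((1/t))=\mathbb{F}_q(u)_{(u)}$, isotropy over $\mathbb{F}_q((1/t))$ is the same as solvability at the finite prime $u$ over $\mathbb{F}_q(u)$.

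First I would substitute $x_m\leftarrow y_m u^{\lceil d_m/2\rceil}$ for $m=1,\dots,4$, where $d_m=\deg a_m$. Writing $a_m=c_m t^{d_m}+\cdots=c_m u^{-d_m}(1+O(u))$ in $\mathbb{F}_q((u))$, the coefficient of $y_m^2$ becomes $b_m$ when $d_m$ is even and $ub_m$ when $d_m$ is odd, where in both cases $b_m\in\mathbb{F}_q[u]$ is a $u$-adic unit with $b_m(0)=c_m$. Since $q$ is odd, Hensel's lemma implies that every unit in $\mathbb{F}_q[[u]]$ with a square residue is a square, so modulo $(\mathbb{F}_q((u))^*)^2$ each $b_m$ can be replaced by its constant term $c_m$. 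Rescaling the variables $y_m$ accordingly, the form becomes equivalent over $\mathbb{F}_q((u))$ to
\[
c_i y_i^2+c_j y_j^2+uc_k y_k^2+uc_l y_l^2,
\]
whose coefficients lie in $\mathbb{F}_q[u]$ and are square-free, with exactly the two coefficients $uc_k$ and $uc_l$ divisible by $u$.

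Then I would invoke Lemma~\ref{Lemma1''} with this form and $f=u$. In the notation of that lemma, the two coefficients divisible by $f$ are $uc_k$ and $uc_l$, so $(uc_k/u)(uc_l/u)=c_kc_l$, while the other two coefficients are $c_i$ and $c_j$. The lemma states that the form is isotropic in $\mathbb{F}_q(u)_{(u)}$ if and only if at least one of $-c_ic_j$ or $-c_kc_l$ is a square modulo $u$; since these quantities already lie in $\mathbb{F}_q^*$, the condition simply means being a square in $\mathbb{F}_q$. The form is therefore anisotropic in $\mathbb{F}_q((1/t))$ precisely when both $-c_ic_j$ and $-c_kc_l$ are non-squares in $\mathbb{F}_q$, which is exactly the claim.

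The only non-routine step is the Hensel normalization of unit coefficients modulo squares, which is standard in odd characteristic; everything else is bookkeeping parallel to the proof of Lemma~\ref{Lemma1'}.
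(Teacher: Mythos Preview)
Your proof is correct and follows essentially the same approach as the paper: pass to $u=1/t$, rescale the variables so that the coefficients become polynomials in $u$ with $u$-adic valuation $0$ or $1$ according to the parity of $\deg a_m$, and then invoke Lemma~\ref{Lemma1''} at the prime $f=u$. The only difference is that the paper applies Lemma~\ref{Lemma1''} directly to the transformed coefficients $a_r'\in\mathbb{F}_q[u]$ and reads off their residues modulo $u$, whereas you insert an extra Hensel normalization to replace each unit part $b_m$ by its constant term $c_m$; this additional step is harmless and in fact spares you from having to check that the $a_r'$ are square-free in $\mathbb{F}_q[u]$ before citing Lemma~\ref{Lemma1''}.
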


\begin{proof}

Let $u=1/t$.  By the
substitution $x_r\leftarrow x_rt^{\lceil{\frac{-deg(a_r)}{2}}\rceil}$
 for $r=1,2,3,4$, we obtain new coefficients
$a_r'\in\mathbb{F}_q[u]$. Observe that the $u$ does not divide $a_i'$ and
$a_j'$ and the multiplicity of $u$ in $a_k'$ and $a_l'$ is 1. The remainder of
$a_i'$ modulo $u$ is $c_i$, the remainder of $a_j'$ modulo $u$ is $c_j$. The
remainder of $a_k'/u$ modulo $u$ is $c_k$ and the remainder $a_l'/u$ modulo
$u$ is $c_l$. Hence we may apply Lemma \ref{Lemma1''} with $f=u$ in
$\mathbb{F}_q[u]$. 


\end{proof}
\begin{remark}\label{inf}
If $q\equiv 1 ~(mod ~ 4)$ then the lemma says that anisotropy occurs if and only if exactly one of $c_i$ and $c_j$ is a square and the same holds for $c_k$ and $c_l$. If $q\equiv 3~ (mod ~ 4)$ then the lemma says that anisotropy occurs if and only $c_i$ and $c_j$ are either both squares or both non-squares and the same holds for $c_k$ and $c_l$. The reason for this is that $-1$ is a square in $\mathbb{F}_q$ if and only if $q\equiv 1 ~ (mod ~ 4)$. 
\end{remark}

There is  also  the following fact \cite[Chapter VI, Theorem 2.2]{Lam}.
\begin{fact}\label{fact2}
Let $K$ be 
a complete field with respect 
to a discrete valuation whose residue field is a finite field with 
odd characteristic. Then every non-degenerate quadratic form over 
$K$ in 5 variables is isotropic.
\end{fact}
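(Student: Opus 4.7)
The plan is to mimic the proof of Lemma~\ref{Lemma1}(1) (which handled an analogous ternary statement) and reduce the five-variable problem to finding an isotropic vector for a ternary subform whose coefficients are units; this will then be lifted by Hensel's lemma.

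First I would diagonalize the form to $a_1x_1^2+\cdots+a_5x_5^2$ with each $a_i\in K^*$, and fix a uniformizer $\pi$ of $K$. Scaling a variable $x_i\leftarrow\pi^{\alpha_i}x_i$ changes $v(a_i)$ by $2\alpha_i$, so only the parity of $v(a_i)$ is an invariant relevant to isotropy. By the pigeonhole principle, at least three of the five parities agree; call these indices $i_1,i_2,i_3$. After suitable rescaling of the corresponding variables, I may assume $v(a_{i_1})=v(a_{i_2})=v(a_{i_3})=0$, so the three chosen coefficients are units in the valuation ring $O$ of $K$. The remaining two coefficients will be set to zero in the final solution and play no further role.

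Next I would reduce the ternary subform $a_{i_1}x_{i_1}^2+a_{i_2}x_{i_2}^2+a_{i_3}x_{i_3}^2$ modulo the valuation ideal. Since all three coefficients are units, the reduction is a non-degenerate ternary quadratic form over the residue field, which is a finite field of odd characteristic. Fact~\ref{fact1}(2) immediately yields an isotropic vector: elements $\bar b_1,\bar b_2,\bar b_3$ of the residue field, not all zero (say $\bar b_1\neq 0$), with $\bar a_{i_1}\bar b_1^2+\bar a_{i_2}\bar b_2^2+\bar a_{i_3}\bar b_3^2=0$.

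Finally, I would lift this to a genuine zero in $K$ via Hensel's lemma. Fixing lifts $b_2,b_3\in O$ of $\bar b_2,\bar b_3$, I consider the one-variable polynomial $P(X)=a_{i_1}X^2+a_{i_2}b_2^2+a_{i_3}b_3^2\in O[X]$. Any lift $b_1\in O$ of $\bar b_1$ satisfies $v(P(b_1))\geq 1$ while $v(P'(b_1))=v(2a_{i_1}b_1)=0$, since $2$, $a_{i_1}$, and $b_1$ are all units (here odd characteristic is used). Hensel's lemma then produces a root $b\in O$ of $P$, giving a non-trivial isotropic vector for the original form with all coordinates outside $\{i_1,i_2,i_3\}$ equal to zero. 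No step is conceptually difficult; the only bookkeeping care needed is in the pigeonhole-and-rescale normalization that makes three coefficients into units.
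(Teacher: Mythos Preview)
Your argument is correct and is in fact the standard proof of this classical result. The paper does not supply its own proof of Fact~\ref{fact2}; it simply cites \cite[Chapter VI, Theorem 2.2]{Lam}, so there is nothing to compare against beyond noting that your pigeonhole--rescale--reduce--Hensel outline is exactly the argument given in Lam. One tiny bookkeeping remark: rescaling variables alone only shifts each $v(a_i)$ by an even integer, so if the common parity among $a_{i_1},a_{i_2},a_{i_3}$ is odd you must also multiply the whole form by $\pi^{-1}$ (or equivalently work with the ternary subform divided by $\pi$) to force $v(a_{i_1})=v(a_{i_2})=v(a_{i_3})=0$; you already flagged this normalization as the only delicate spot, and indeed it is harmless once made explicit.
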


We state a variant of the Hasse-Minkowski theorem over $\mathbb{F}_q(t)~
$\cite[Chapter VI, 3.1]{Lam}. It was proved by Hasse's doctoral student
Herbert Rauter in 1926 \cite{Rauter}.
\begin{theorem}\label{HM}
A non-degenerate quadratic form over $\mathbb{F}_q(t)$ is isotropic over $\mathbb{F}_q(t)$ if and only if it is isotropic over every completion of $\mathbb{F}_q(t)$.
\end{theorem}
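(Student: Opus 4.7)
The plan is to establish the nontrivial direction (local isotropy at every completion implies global isotropy) by induction on the number $n$ of variables; the converse is immediate because $\mathbb{F}_q(t)$ embeds in each completion $\mathbb{F}_q(t)_v$. After Gram-Schmidt I may take $Q=a_1x_1^2+\cdots+a_nx_n^2$ with square-free $a_i\in\mathbb{F}_q[t]$. The base case $n=2$ reduces by Fact \ref{fact1} to showing that an element of $\mathbb{F}_q(t)^*$ which is a square in every completion is a global square; this follows because the square class of a rational function is determined by the parities of its valuations at the finite primes, by the parity of its degree, and by the square class of its leading coefficient in $\mathbb{F}_q^*$, and all of these invariants are visible in the corresponding completion.

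The decisive case is $n=3$. I would translate isotropy of $Q$ into the question of splitting the associated quaternion algebra $H_{\mathbb{F}_q(t)}(A,B)$, which corresponds to an element of order dividing $2$ in the Brauer group $\mathrm{Br}(\mathbb{F}_q(t))$. The function-field analogue of the Albert-Brauer-Hasse-Noether theorem gives the exact sequence
\[
0\to\mathrm{Br}(\mathbb{F}_q(t))\to\bigoplus_v\mathrm{Br}(\mathbb{F}_q(t)_v)\xrightarrow{\sum_v\mathrm{inv}_v}\mathbb{Q}/\mathbb{Z}\to 0;
\]
vanishing of every local invariant then forces global splitting. This is equivalent to the product formula for the Hilbert symbol on $\mathbb{F}_q(t)$, i.e., quadratic reciprocity for function fields, and it is the arithmetic heart of the theorem.

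The remaining cases $n\geq 4$ reduce to $n=3$ by a common-value splitting. Write $Q=Q_1\oplus Q_2$ with $\dim Q_i\geq 2$ and seek a nonzero square class $c\in\mathbb{F}_q(t)^*/(\mathbb{F}_q(t)^*)^2$ such that $Q_1$ represents $c$ and $-Q_2$ represents $c$. At each completion, local isotropy of $Q$ provides a local square class $c_v$; only finitely many primes (those dividing some $a_i$, together with infinity) are relevant, so weak approximation on the finite quotient of square classes produces a global $c$ agreeing with every $c_v$. The ternary forms $Q_1\oplus\langle -c\rangle$ and $Q_2\oplus\langle c\rangle$ are then locally isotropic everywhere and fall under the $n=3$ case, yielding the required representations; combining them gives an isotropic vector for $Q$. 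For $n\geq 5$ the local hypotheses are automatic by Fact \ref{fact2}, so the induction terminates. The chief obstacle is therefore $n=3$, which contains the full arithmetic input (reciprocity); the secondary technical point is the weak-approximation step for $n=4$, routine but requiring care to track exactly which primes control the square class.
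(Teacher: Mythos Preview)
The paper does not supply a proof of Theorem~\ref{HM}: it is quoted as a known result, with a reference to \cite[Chapter~VI, 3.1]{Lam} and an attribution to Rauter~\cite{Rauter}. There is therefore no ``paper's own proof'' to compare your argument against; your sketch is simply the classical route to Hasse--Minkowski over a global field.

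That said, one step in your outline is not yet justified. In the $n=4$ splitting, you choose a global square class $c$ by weak approximation at the finitely many bad places and then assert that the ternary forms $Q_1\oplus\langle -c\rangle$ and $Q_2\oplus\langle c\rangle$ are locally isotropic \emph{everywhere}. But the element $c$ you produce will typically be divisible to odd order by primes $p$ that were not among the original bad places. At such a $p$ the form $a_1x_1^2+a_2x_2^2-cz^2$ has exactly one coefficient of odd $p$-valuation, and by Lemma~\ref{Lemma1}(2) its isotropy at $p$ requires $-a_1a_2$ to be a square modulo $p$, which is not automatic. Weak approximation alone does not control these new primes. The standard repair is to force $c$ to introduce at most one extra prime (via the function-field Dirichlet theorem, Fact~\ref{number}) and then use Hilbert reciprocity (Fact~\ref{Product formula}) to take care of that last place; this is exactly the mechanism behind the paper's Lemma~\ref{main1}. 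Your closing remark that the step ``requires care'' is well placed, but as written the claim of local isotropy everywhere is a genuine gap.
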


For ternary quadratic forms there exists a slightly stronger version of this theorem which is a consequence of the product formula for quaternion algebras or Hilbert's reciprocity law \cite[Chapter IX, Theorem 4.6]{Lam}:
\begin{fact}\label{Product formula}
Let $a_1x_1^2+a_2x_2^2+a_3x_3^2$ be a non-degenerate quadratic form over $\mathbb{F}_q(t)$. Then if it is isotropic in every completion except maybe one then it is isotropic over $\mathbb{F}_q(t)$.
\end{fact}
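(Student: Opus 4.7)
The plan is to reduce the statement to Hilbert reciprocity applied to a quaternion algebra, and then invoke the Hasse--Minkowski theorem already stated as Theorem~\ref{HM}.

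First, I would translate isotropy of the ternary form into the splitting of a quaternion algebra. By the normalization discussed in Subsection~\ref{subsec:qf-gen}, the form $a_1x_1^2+a_2x_2^2+a_3x_3^2$ is equivalent (up to a non-zero scalar that does not affect isotropy) to a form of the shape $ax_1^2+bx_2^2-abx_3^2$ for suitable $a,b\in \mathbb{F}_q(t)^*$. As recalled there, this form is isotropic over a field $F$ containing $\mathbb{F}_q(t)$ if and only if the quaternion algebra $H_F(a,b)$ is split (i.e.\ isomorphic to $M_2(F)$), and otherwise $H_F(a,b)$ is a division algebra. In particular, isotropy at a completion $\mathbb{F}_q(t)_v$ is controlled by the Hilbert symbol $(a,b)_v\in\{\pm 1\}$, where $(a,b)_v=1$ precisely when $H_{\mathbb{F}_q(t)_v}(a,b)$ splits.

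Next, I would invoke Hilbert reciprocity over $\mathbb{F}_q(t)$, which asserts
\begin{equation*}
\prod_v (a,b)_v = 1,
\end{equation*}
the product being taken over all primes $v$ of $\mathbb{F}_q(t)$ (finite primes and the prime at infinity). By hypothesis, the form is isotropic at every completion except possibly one, so $(a,b)_v=1$ for all $v$ except possibly a single distinguished prime $v_0$. The product formula then forces $(a,b)_{v_0}=1$ as well, so the form is in fact isotropic at every completion of $\mathbb{F}_q(t)$.

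Finally, I would apply Theorem~\ref{HM} (the Hasse--Minkowski theorem of Rauter): a non-degenerate form which is isotropic at every completion is isotropic over $\mathbb{F}_q(t)$ itself. This gives the desired conclusion. The only non-trivial ingredient, and thus the ``hard part,'' is Hilbert reciprocity for $\mathbb{F}_q(t)$; however, this is a standard classical result, and the translation between ternary form isotropy and quaternion algebra splitting together with Theorem~\ref{HM} makes the rest of the argument essentially formal.
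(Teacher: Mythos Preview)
Your proposal is correct and follows exactly the route the paper indicates: the paper does not give a detailed proof but states this fact as a direct consequence of the product formula for quaternion algebras (Hilbert reciprocity), citing \cite[Chapter IX, Theorem 4.6]{Lam}. Your reduction of ternary isotropy to the splitting of $H_{\mathbb{F}_q(t)}(a,b)$, application of $\prod_v (a,b)_v=1$, and appeal to Theorem~\ref{HM} is precisely the argument behind that citation.
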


There is a useful fact about local isotropy of a quadratic form \cite[Chapter VI, Corollary 2.5]{Lam}:
\begin{fact}\label{not divide}
Let $Q(x_1,\dots,x_n)=a_1x_1^2+\dots +a_nx_n^2$ ($n\geq 3$) be a non-degenerate quadratic form over $\mathbb{F}_q(t)$ where $a_i\in\mathbb{F}_q[t]$. If $f\in\mathbb{F}_q[t]$ is a monic irreducible not dividing $a_1\dots a_n$ then $Q$ is isotropic in the $f$-adic completion.
\end{fact}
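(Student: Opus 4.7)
The plan is to reduce modulo $f$ to get a quadratic form over the finite residue field, find a non-trivial zero there by a counting argument, and then lift to a zero in $\mathbb{F}_q(t)_{(f)}$ via Hensel's lemma.

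First I would note that since $f$ is a monic irreducible not dividing any $a_i$, each $a_i$ is a unit in the valuation ring $O$ of $\mathbb{F}_q(t)_{(f)}$. The residue field $k = O/(f) \cong \mathbb{F}_q[t]/(f)$ is a finite field of order $q^{\deg f}$ and odd characteristic. Denote by $\overline{a}_i \in k^*$ the reduction of $a_i$ modulo $f$, and consider the reduced diagonal form $\overline{Q}(x_1,\dots,x_n) = \overline{a}_1 x_1^2 + \dots + \overline{a}_n x_n^2$ over $k$. Since $n \geq 3$ and all $\overline{a}_i$ are non-zero, Fact \ref{fact1}(2) applied to $k$ yields a non-trivial zero $(\overline{u}_1,\dots,\overline{u}_n)\in k^n$; after renumbering, we may assume $\overline{u}_1\neq 0$.

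Next I would lift this mod-$f$ zero. Pick any preimages $u_2,\dots,u_n\in O$ of $\overline{u}_2,\dots,\overline{u}_n$, and a preimage $u_1 \in O$ of $\overline{u}_1$ which is necessarily a unit since $\overline{u}_1\neq 0$. Consider the one-variable polynomial
\[
g(X) = a_1 X^2 + a_2 u_2^2 + \dots + a_n u_n^2 \in O[X].
\]
By construction $g(u_1) \equiv \overline{Q}(\overline{u}_1,\dots,\overline{u}_n) = 0 \pmod{f}$, so $v_f(g(u_1)) \geq 1$. On the other hand $g'(u_1) = 2 a_1 u_1$, which is a unit in $O$ (here we use that $\chara k \neq 2$, that $a_1$ is a unit, and that $u_1$ is a unit); thus $v_f(g'(u_1)) = 0$.

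The key step is Hensel's lemma, which in the standard form applies whenever $v_f(g(u_1)) > 2 v_f(g'(u_1))$; here the left-hand side is $\geq 1$ and the right is $0$, so the hypothesis is satisfied. This produces $u_1' \in O$ with $g(u_1') = 0$ and $u_1' \equiv u_1 \pmod f$. Then $(u_1', u_2, \dots, u_n)$ is a zero of $Q$ in $\mathbb{F}_q(t)_{(f)}$, and it is non-trivial because $u_1'$ reduces to the unit $\overline{u}_1$. The only mildly delicate point is the verification of the Hensel hypothesis, which is immediate once the correct coordinate has been chosen so that $u_1$ reduces to a non-zero element of $k$; everything else is bookkeeping.
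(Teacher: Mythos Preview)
Your argument is correct: reduce modulo $f$ to a non-degenerate diagonal form over the finite residue field, invoke isotropy in $\geq 3$ variables, and lift via Hensel. The only small remark is that Fact~\ref{fact1}(2) is stated in the paper for $\mathbb{F}_q$, whereas you apply it to the residue field $k=\mathbb{F}_{q^{\deg f}}$; this is of course harmless since the statement holds for any finite field of odd order.

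As for comparison: the paper does not prove Fact~\ref{not divide} at all but simply records it with a reference to \cite[Chapter~VI, Corollary~2.5]{Lam} (the same reference is invoked in the proof of Lemma~\ref{Lemma1}(1)). Your proof is precisely the standard argument underlying that cited result, so there is no genuine methodological difference---you have supplied the details the paper chose to outsource.
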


We finish the subsection with a formula on the number of monic irreducible polynomials of given degree in a residue class (\cite[Theorem 5.1.]{Wan}):

\begin{fact}\label{number}
Let $a,m\in\mathbb{F}_q[t]$ be such that $deg(m)>0$ and $gcd(a,m)=1$. Let $N$ be a positive integer and let  
$$S_N(a,m)=\#\{f\in\mathbb{F}_q[t]~\text{monic irreducible}~|~f\equiv a ~(mod ~m),~ deg(f)=N\}.$$
Let $M=deg(m)$ and let $\Phi(m)$ denote the number of polynomials in $\mathbb{F}_q[t]$ relative prime to $m$ whose degree is smaller than M. Then we have the following inequality:
\begin{equation*}
|S_N(a,m)-\frac{q^N}{\Phi(m)N}|\leq \frac{1}{N}(M+1)q^{\frac{N}{2}}.
\end{equation*}
\end{fact}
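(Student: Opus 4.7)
The plan is to adapt the function field analogue of Dirichlet's theorem on primes in arithmetic progressions, whose deepest ingredient is the Riemann hypothesis for curves over finite fields (Weil's theorem). First I would introduce Dirichlet characters $\chi$ of the finite group $G = (\mathbb{F}_q[t]/m\mathbb{F}_q[t])^*$, extending each to $\mathbb{F}_q[t]$ by $\chi(f) = 0$ whenever $\gcd(f, m) \ne 1$. Orthogonality of characters yields
\[
S_N(a, m) \;=\; \frac{1}{\Phi(m)} \sum_{\chi} \overline{\chi(a)}\, \pi_N(\chi), \qquad \pi_N(\chi) := \sum_{\substack{f \text{ monic irreducible}\\ \deg f = N}} \chi(f),
\]
which reduces the task to producing the main term from the principal character $\chi_0$ and bounding $\pi_N(\chi)$ for the remaining $\Phi(m) - 1$ non-principal characters.

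Next I would attach to each $\chi$ the associated $L$-function
\[
L(u, \chi) \;=\; \sum_{f \text{ monic}} \chi(f) u^{\deg f} \;=\; \prod_{f \text{ monic irreducible}} \bigl(1 - \chi(f) u^{\deg f}\bigr)^{-1},
\]
and expand its logarithmic derivative as a power series in $u$:
\[
u \frac{L'(u, \chi)}{L(u, \chi)} \;=\; \sum_{N \ge 1} \Bigl( \sum_{d \mid N} d \sum_{\deg f = d} \chi(f)^{N/d} \Bigr) u^N.
\]
For $\chi = \chi_0$ one has $L(u, \chi_0) = (1 - qu)^{-1} \prod_{g \mid m} (1 - u^{\deg g})$, which yields $\pi_N(\chi_0) = q^N/N + O(q^{N/2})$ after extracting the dominant geometric contribution and bounding the remaining divisors trivially. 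Division by $\Phi(m)$ produces the required main term $q^N/(\Phi(m) N)$.

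For a non-principal $\chi$, I would invoke Weil's theorem: $L(u, \chi)$ is a polynomial in $u$ of degree at most $M$, whose reciprocal roots $\alpha_i(\chi)$ all satisfy $|\alpha_i(\chi)| = q^{1/2}$. Writing $L(u, \chi) = \prod_i (1 - \alpha_i(\chi) u)$, the coefficient of $u^N$ in $u L'/L$ becomes $-\sum_i \alpha_i(\chi)^N$, whose absolute value is at most $M q^{N/2}$. Peeling off the dominant $d = N$ term, which equals $N \pi_N(\chi)$, and bounding the remaining $d \mid N$, $d < N$ contributions by the trivial estimate $\sum_{d \mid N, d<N} q^d \le q^{N/2}$, I obtain $|\pi_N(\chi)| \le (M+1) q^{N/2}/N$. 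Substituting into the orthogonality identity, using $|\chi(a)| = 1$ for $\gcd(a,m) = 1$, and averaging over the $\Phi(m) - 1$ non-principal characters yields the stated bound.

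The main obstacle is Weil's theorem, used here as a black box; once it is granted, the remainder is routine formal manipulation of characters and generating functions. The only delicate bookkeeping concerns the exact degree of $L(u,\chi)$, which for a non-principal character modulo $m$ is at most $M - 1$ (possibly increased by one when accounting for the prime at infinity and ramification at primes dividing $m$), matching the constant $M + 1$ in the statement.
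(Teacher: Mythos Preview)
The paper does not prove this statement at all: it is presented as a \emph{Fact} cited from \cite[Theorem~5.1]{Wan} (with antecedents in Rhin and, ultimately, Weil), and is used purely as a black box in the analysis of Step~5 of the main algorithms. There is therefore no proof in the paper to compare against.

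That said, your sketch is the standard route to this result and is essentially correct. A couple of minor points of bookkeeping: for a non-principal $\chi$ modulo $m$, the polynomial $L(u,\chi)$ has degree at most $M-1$, and its reciprocal roots satisfy $|\alpha_i|\in\{1,q^{1/2}\}$ rather than $|\alpha_i|=q^{1/2}$ uniformly (the trivial zeros coming from imprimitivity have $|\alpha_i|=1$); either way $|\alpha_i|\le q^{1/2}$ suffices. Also, the bound $\sum_{d\mid N,\,d<N} q^d \le q^{N/2}$ is slightly too optimistic as stated (one gets something like $2q^{N/2}$ for $q\ge 3$), but the slack between the true degree bound $M-1$ and the constant $M+1$ in the statement absorbs this. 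With these small corrections the argument goes through and matches what one finds in Wan or Rosen's textbook treatment.
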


As indicated in the Introduction, this fact is an extremely effective bound on the number of irreducible polynomials of a given degree in an arithmetic progression. A similar error term for prime numbers from an arithmetic progression in a given interval is not known.

\subsection{Gram-Schmidt orthogonalization}

\label{subsec:Gram-Schmidt}

We propose a version of the Gram-Schmidt orthogonalizition procedure
and prove a bound on the size of its output over $\mathbb{F}_q(t)$.
\begin{lemma}\label{GS}
Let $(V,h)$ be an $n$-dimensional quadratic space over $\mathbb{F}_q(t)$. 
We assume that $h$ is given by its Gram matrix with respect to a 
basis $v_1,v_2,\dots, v_n$ whose entries are represented as fractions
of polynomials. Suppose that all the numerators occurring in the Gram matrix
have degree at most 
$\Delta$ while the degrees of the denominators are bounded by $\Delta'$. 
Then there is a deterministic
polynomial time algorithm which finds 
an orthogonal basis $w_1,\dots,w_n$ with respect to 
$h$ such that the maximum of the degrees of the numerators and 
the denominators of the $h(w_i,w_i)$ is $O(n(\Delta+\Delta'))$.
\end{lemma}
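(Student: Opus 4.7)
The plan is to perform a variant of Gram--Schmidt orthogonalization on $v_1,\ldots,v_n$ over $\F_q(t)$, equipped with a mechanism to handle isotropic vectors encountered during the process, and then to bound the degrees of the outputs using a classical determinantal identity together with a symmetric rescaling of the Gram matrix.

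At the $k$-th step, suppose we have already constructed mutually orthogonal $w_1,\ldots,w_{k-1}$ with $h(w_i,w_i)\ne 0$. I would maintain the projected vectors
\[
u_j := v_j - \sum_{i<k}\frac{h(v_j,w_i)}{h(w_i,w_i)}\,w_i \qquad (j\ge k),
\]
which lie in $\langle w_1,\ldots,w_{k-1}\rangle^\perp$. If some $u_j$ satisfies $h(u_j,u_j)\ne 0$, set $w_k := u_j$ (after reordering). Otherwise, either some off-diagonal value $h(u_i,u_j)$ is nonzero, in which case $w_k := u_i + u_j$ yields $h(w_k,w_k) = 2\,h(u_i,u_j) \ne 0$ (using $\chara\F_q(t)\ne 2$), or else all bilinear values among $u_k,\ldots,u_n$ vanish, so these vectors lie in the radical and complete the orthogonal basis as is.

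For the degree bound, I rely on the identity $\prod_{i\le k}h(w_i,w_i) = \det(M_k)$, where $M_k$ is the $k\times k$ principal minor of the original Gram matrix. This identity holds after reordering the $v_i$'s so that the successive principal minors are non-zero; the isotropy-correction step corresponds to an elementary symmetric row-column operation that changes $\det(M_k)$ only by a unit factor. Consequently $h(w_k,w_k) = \det(M_k)/\det(M_{k-1})$. To bound these determinants as rational functions, I would symmetrically rescale $M$ by $\operatorname{diag}(r_1,\ldots,r_n)$ with $r_i := \operatorname{lcm}_j q_{ij}$, which has degree at most $n\Delta'$; the rescaled matrix has polynomial entries of degree $O(\Delta + n\Delta')$, so by the Leibniz formula its $k\times k$ principal minor is a polynomial whose degree is linear in $k$ and in $\Delta,\Delta'$. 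Undoing the scaling (dividing by $(r_1 \cdots r_k)^2$) and forming the ratio $\det(M_k)/\det(M_{k-1})$ then yields the polynomial degree bound on numerator and denominator of $h(w_k,w_k)$ claimed in the statement.

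The main obstacle is the isotropy-correction: one must produce a permutation or small modification of $v_1,\ldots,v_n$ algorithmically so that every relevant principal minor is non-zero, while verifying that the modification only alters the determinantal identity by a well-controlled unit factor. A secondary technical concern is that the representation of each $w_k$ as a linear combination of the original $v_i$'s must also have coefficients of controlled degree, so that all arithmetic along the way stays polynomial-time; these coefficients are ratios of minors of the Gram matrix, so the same determinantal bound (via Cramer's rule) controls their sizes.
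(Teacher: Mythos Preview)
Your proposal is correct and follows essentially the same strategy as the paper: both observe that the isotropy corrections (permuting the $v_i$ and replacing some $v_i$ by $v_i+v_j$) amount to a change of basis with transition matrix in $GL_n(\F_q)$, so the Gram-matrix entries keep the same degree bounds, after which the diagonal values $h(w_k,w_k)$ are controlled by ratios of principal minors via Cramer's rule. The only organizational difference is that the paper performs the isotropy correction as a single preprocessing pass---producing a basis $v_1',\ldots,v_n'$ whose leading principal Gram minors are all nonsingular---and then runs ordinary Gram--Schmidt, whereas you interleave the correction with the orthogonalization; your determinantal identity $\prod_{i\le k}h(w_i,w_i)=\det(M_k)$ and the symmetric rescaling argument are exactly the content of the paper's terse appeal to ``Cramer's rule.''
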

\begin{proof}

We may assume that $h$ is regular. Indeed, we can compute the radical of $V$ by solving a system of linear equations and then continue in a direct complement of it. It is easy to select a basis for this direct complement as a subset of the original basis.  

We find an anisotropic vector $v_1'$ in the following way. 
If one of the $v_i$ is anisotropic then we choose $v_1':=v_i$. 
If all of them are isotropic then there must be an index $i$ such that 
$h(v_i,v_1)\neq 0$, otherwise $h$ would not be regular. 
Since $q$ is odd $v_1':=v_i+v_1$ will suffice.

Afterwards, we transform the basis $v_1,\dots,v_n$ into 
a basis $v_1',\dots,v_n'$ which has the property
that for every $k$, the subspace generated by $v_1',\dots, v_k'$ is regular. 
We start with $v_1'$ which is already anisotropic. 
Then we proceed inductively. We choose $v_{k+1}'$ in the following way. 
If some $j$ between $k+1$ and $n$ has the property that the subspace 
spanned by $v_1',\dots,v_k'$ and $v_j$ is regular then we choose 
$v_{k+1}':=v_j$ where $j$ is the smallest such index. Otherwise we claim that there exists an index $j$ 
between $k+1$ and $n$, that $v_{k+1}'=v_{k+1}+v_j$ is suitable. 
Note that if this is true then this can be checked in polynomial time. 
Indeed, the cost of the computation is dominated by that of computing
the determinants of the Gram matrices of the restriction of $h$ to the subspace 
spanned by $v_1',\dots, v_k'$ together with the candidate
$v_{k+1}'$. The number of these determinants is $O(n)$.  

Now we prove the claim. Let $U$ be now the subspace generated by 
$v_1',\dots, v_k'$ and let $\phi_U$ be the orthogonal projection 
onto the subspace $U$. Note that by our assumptions $U$ is a 
regular subspace and hence $V$ can be decomposed as the orthogonal
sum of the subspaces $U$ and $U^\perp$. Let $v^*=v-\phi_U(v)$, so $v^*$ 
is in the orthogonal complement of $U$. We have to prove that 
if neither $v_j$ is a suitable choice for $v_{k+1}'$ then there exists a 
$j$ such that $v_{k+1}+v_j$ is suitable. Note that if $v_{k+1}$ is 
not a suitable choice then the subspace generated by $U$ and $v_{k+1}^*$ 
is not regular (they generate the same subspace as $U$ and 
$v_{k+1}$) hence $v_{k+1}^*$ is isotropic because $U$ was regular. 
If for any $j$ between $k+1$ and $n$, the vector $v_j^*$ is anisotropic, 
we choose $v_{k+1}'=v_j^*$. Otherwise there must be a $j$ between $k+1$ 
and $n$ such that $h(v_{k+1}^*,v_j^*)\neq 0$ since $h$ is regular. 
This implies that $v_{k+1}^*+v_j^*$ is anisotropic since 
$h(v_{k+1}^*+v_j^*,v_{k+1}^*+v_j^*)=2h(v_{k+1}^*,v_j^*)\neq 0$. 
Observe that $v_{k+1}^*+v_j^*=(v_{k+1}+v_j)^*$ so $(v_{k+1}+v_j)^*$ is 
anisotropic. This implies that the subspace generated by 
$U$ and $v_{k+1}+v_j$ is regular.

Now we compute an orthogonal basis $w_1,\dots,w_n$ from the starting 
basis $v_1',\dots, v_n'$. We start with $w_1:=v_1'$. Let 
$w_k:=v_k'-u_k$ where $u_k$ is the unique vector from the subspace 
generated by $v_1',\dots,v_{k-1}'$  with the property 
that $h(u_i,v_j')=h(v_i',v_j')$ for every $j$ between 1 and $k$.
Uniqueness comes from the fact 
that $v_1',\dots, v_{k-1}'$ spans a regular subspace.

Finding $u_k$ is solving a system of $k$ linear equations with 
$k$ variables. Since the coefficient matrix of the system is 
non-singular because we chose $v_1',\dots, v_k'$ in this way, thus 
Cramer's rule applies. The same bounds on degrees apply to the 
Gram matrix obtained from the $v_i'$ as the original 
Gram matrix obtained from the $v_i$, since the transition matrix 
$T\in GL_n(\mathbb{F}_q)$. Hence Cramer's rule gives us the bounds 
on the $w_i$ as claimed.
\end{proof}

\subsection{Effective isotropy of binary and ternary quadratic forms
over $\mathbb{F}_q(t)$}

\label{subsec:effective-2-3}

We can efficiently diagonalize regular quadratic 
forms over $\mathbb{F}_q(t)$ using the version of the
Gram-Schmidt orthoginalization procedure discussed in Subsection~\ref{subsec:Gram-Schmidt}. Then a binary form can 
be made equivalent to
$b(x_1^2-ax_2^2)$ for some $a,b\in \mathbb{F}_q(t)$. The coefficient
$a$ is represented as the product of a scalar from $\mathbb{F}_q$ 
with the quotient of two monic polynomials. We can use the
Euclidean algorithm to make the quotient reduced. Then testing
whether $a$ is a square can be done in deterministic polynomial
time by computing the squarefree factorization of the two
monic polynomials and by computing the $\frac {q-1}{2}$th power of the scalar.
If $a$ is a square then a square root of it can be computed
by a randomized polynomial time method, the essential part of this
is computing a square root of the scalar constituent (\cite{Berlekamp},\cite{Shanks}).
Using this square root, 
linear substitutions ``standardizing'' 
hyperbolic forms (making them equivalent to $x_1^2-x_2^2$
or to $x_1x_2$, whichever is more desirable) can be computed
as discussed in Subsection~\ref{subsec:qf-gen}.

Non-trivial zeros of isotropic ternary quadratic forms can be computed 
in randomized polynomial time using the method of of Cremona and van Hoeij from \cite{Cremona1}. Through 
the connection with quaternion algebras described 
in Subsection~\ref{subsec:qf-gen}, the paper \cite{IKR} offers an
alternative approach. Here we cite the explicit bound on the size
of a solution from \cite[Section 1]{Cremona1}.

\begin{fact}\label{size}
Let $Q(x_1,x_2,x_3)=a_1x_1^2+a_2x_2^2+a_3x_3^2$ where 
$a_i\in\mathbb{F}_q[t]$. Then there is a randomized polynomial time 
algorithm 
which decides if $Q$ is isotropic and if it is, 
then computes a non-zero solution $(b_1,b_2,b_3)$ to $Q(x_1,x_2,x_3)=0$ 
with polynomials $b_1,b_2,b_3 \in\mathbb{F}_q[t]$ having 
the following degree bounds:
\begin{enumerate}[\rm(1)]

\item $deg(b_1)\leq deg(a_2a_3)/2,$
\item $deg(b_2)\leq deg(a_3a_1)/2,$
\item $deg(b_3)\leq deg(a_1a_2)/2.$
\end{enumerate}
\end{fact}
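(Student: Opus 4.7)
My plan combines a Hasse-principle-based decision procedure with a lattice-reduction construction of a small zero. First I would normalize: by extracting square factors and pairwise common divisors, one reduces in deterministic polynomial time (via gcd and squarefree factorization in $\mathbb{F}_q[t]$) to the case where $a_1, a_2, a_3$ are squarefree and pairwise coprime, without worsening the target degree bounds. For the isotropy decision, Fact \ref{Product formula} together with Lemmas \ref{Lemma1} and \ref{Lemma1'} reduces the question to a finite collection of square/non-square tests in residue fields at the irreducible divisors of $a_1a_2a_3$ and at the infinite prime; each test is polynomial time by Remark \ref{square2}.

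To construct a zero when $Q$ is isotropic, I would build an $\mathbb{F}_q[t]$-sublattice on which $Q$ is forced to be divisible by $a_1 a_2 a_3$. For each irreducible $f \mid a_3$, local isotropy implies that $-a_2/a_1$ is a square in $\mathbb{F}_q[t]/(f)$; extracting a square root by randomized finite-field square-root methods (\cite{Berlekamp}, \cite{Shanks}) and assembling via the Chinese remainder theorem yields $r_3 \in \mathbb{F}_q[t]$ with $r_3^2 \equiv -a_2/a_1 \pmod{a_3}$. Symmetrically one obtains $r_1$ modulo $a_1$ and $r_2$ modulo $a_2$. Setting
\[
L = \{(x_1,x_2,x_3)\in \mathbb{F}_q[t]^3 : x_1 \equiv r_3 x_2 \pmod{a_3},\ x_2 \equiv r_1 x_3 \pmod{a_1},\ x_3 \equiv r_2 x_1 \pmod{a_2}\},
\]
a direct computation gives $a_1 a_2 a_3 \mid Q(\veg x)$ for every $\veg x \in L$, and $L$ is a full-rank $\mathbb{F}_q[t]$-sublattice of index $a_1 a_2 a_3$ in $\mathbb{F}_q[t]^3$.

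I would then apply a Popov/Lenstra-style basis reduction for $\mathbb{F}_q[t]$-lattices with coordinate weights chosen so that coordinate $i$ carries weight $\tfrac{1}{2}\deg(a_j a_k)$. The function-field analogue of Minkowski's theorem then yields, in deterministic polynomial time, a nonzero $(b_1,b_2,b_3) \in L$ with $\deg b_i \leq \tfrac{1}{2}\deg(a_j a_k)$. For such a vector, $Q(\veg b)$ is a polynomial divisible by $a_1 a_2 a_3$ whose degree is at most $\max_i[\deg a_i + 2\deg b_i] \leq \deg(a_1 a_2 a_3)$, forcing $Q(\veg b) = c \cdot a_1 a_2 a_3$ for some $c \in \mathbb{F}_q$. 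A final argument using local isotropy at infinity (the leading-coefficient condition in Lemma \ref{Lemma1'}) forces $c = 0$, so $(b_1,b_2,b_3)$ is a genuine zero with the required bounds.

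The main technical obstacles I anticipate are twofold: controlling the individual coordinate degrees of the reduced basis (standard Minkowski-style results control only products or sums of successive minima, so a weighted reduction must be set up so that weighted minima distribute exactly as $\tfrac{1}{2}\deg(a_j a_k)$), and ruling out a nonzero residual constant $c$, which requires carefully tracking how the infinite-place data interacts with the lattice construction. These two points constitute the technical core of the argument, and are where the concrete reduction theory of indefinite ternary quadratic forms over $\mathbb{F}_q[t]$ does the real work.
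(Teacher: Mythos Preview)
The paper does not prove this statement at all: Fact~\ref{size} is quoted verbatim from \cite[Section~1]{Cremona1} as an external result, with no argument supplied. Your sketch is in fact a faithful outline of the Cremona--van Hoeij method itself (the function-field analogue of the Legendre/Cremona--Rusin lattice approach to conics over~$\Z$): normalize to squarefree pairwise-coprime coefficients, extract local square roots to build a sublattice of index $a_1a_2a_3$ on which $Q$ is divisible by $a_1a_2a_3$, and then use a weighted reduced basis over $\F_q[t]$ to produce a short vector obeying the stated coordinate bounds. So there is nothing to compare against in the present paper, and your proposed route is the one taken in the cited source.

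The two points you flag as obstacles are exactly the ones that require care in~\cite{Cremona1}. For the first, the weighted Minkowski bound over $\F_q[t]$ is sharper than its archimedean counterpart (the successive minima sum to the index degree with no slack), so the individual coordinate bounds do fall out once the weights are set correctly; this is handled cleanly by Popov form. For the second, your heuristic that ``local isotropy at infinity forces $c=0$'' is not quite how it goes: when all three degree bounds are attained with equality, the leading-coefficient equation $c_1\beta_1^2+c_2\beta_2^2+c_3\beta_3^2=c$ (with $c_i$ the leading coefficients of $a_i$ and $\beta_i$ those of $b_i$) need not vanish automatically. In \cite{Cremona1} this is resolved not by a pure local argument but by an explicit adjustment step---one modifies the short lattice vector by a suitable $\F_q$-multiple of another basis vector to kill the top coefficient---so you should expect to do a small amount of extra work there rather than invoke Lemma~\ref{Lemma1'} directly.
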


\section{Minimization and splitting}

In this section we describe the key ingredients needed 
for our algorithms for finding non-trivial zeros in 4 or 5 variables.
First we do some basic minimization to the quadratic form. 
Then we split the form $Q(x_1,\dots,x_n)$ (where $n=4$ or $n=5$) into two forms and 
show the existence of a certain value they both represent, assuming the 
original form is isotropic. The section is divided in two parts. 
The first deals with quadratic forms in 4 variables, the second with 
quadratic forms in 5 variables. 

\subsection{The quaternary case}

We consider a quadratic form $Q(x_1,x_2,x_3,x_4)=a_1x_1^2+a_2x_2^2+a_3x_3^2+a_4x_4^2$. We assume that all the $a_i$ are in $\mathbb{F}_q[t]$ and are non-zero.


We now give a simple algorithm which minimizes $Q$ in a certain way. We start with definitions:
\begin{definition}
We call a polynomial $h\in\mathbb{F}_q[t]$ {\em cube-free} if there do not exist any monic irreducible $f\in\mathbb{F}_q[t]$ such that $f^3$ divides $h$.
\end{definition}

Our goal is to replace $Q$ with another quadratic form $Q'$ which is isotropic if and only if $Q$ was isotropic and which has the property that from a non-trivial 
zero of $Q'$ a non-trivial zero  of $Q$ can be retrieved in polynomial time. For instance if we apply a linear change of variables to $Q$ (i.e., we replace $Q$ with an explicitly equivalent form), then this will be the case. However, we may further relax the notion of equivalence by allowing to multiply the quadratic form with a non-zero element from $\mathbb{F}_q(t)$. 
 
\begin{definition}
Let $Q$ and $Q'$ be diagonal quadratic forms in $n$ variables. We call $Q$ and $Q'$ projectively equivalent if $Q'$ can be obtained from $Q$ 
using the following two operations:
\begin{enumerate}[\rm(1)]
\item multiplication of $Q$ by a non-zero $g\in\mathbb{F}_q(t)$
\item linear change of variables
\end{enumerate}

We call these two operations {\em projective substitutions}. 
\end{definition}


\begin{definition}\label{minimized}
We call a diagonal quaternary quadratic form $$Q(x_1,x_2,x_3,x_4)=a_1x_1^2+a_2x_2^2+a_3x_3^2+a_4x_4^2$$ 
minimized if it satisfies the following four properties:
\begin{enumerate}[\rm(1)]
\item All the $a_i$ are square-free,
\item The determinant of $Q$ is cube-free,
\item If a monic irreducible $f$ does not divide $a_i$ and $a_j$ but divides the other two, then $-a_ia_j$ is a square modulo $f$,
\item The number of square leading coefficients among the $a_i$ is at least the number of non-square leading coefficients among the $a_i$.
\end{enumerate}
\end{definition}

\begin{remark}
By Lemma \ref{Lemma1} and Lemma \ref{Lemma1''}, a minimized quadratic form is locally isotropic at any finite prime.
\end{remark}

\begin{lemma}\label{minimization1}
There is a randomized algorithm running in polynomial time which either shows that $Q$ is anisotropic at a finite prime or returns the following data:
\begin{enumerate}[\rm(1)]
\item a minimized diagonal quadratic form $Q'$ which is projectively equivalent to $Q$,
\item a projective substitution which turns $Q$ into $Q'$.
\end{enumerate}
\end{lemma}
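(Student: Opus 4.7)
My plan is to modify $Q$ step by step via projective substitutions, tracking the cumulative substitution throughout, until each of the four conditions of Definition \ref{minimized} holds, or until local anisotropy at a finite prime is detected.

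First I would enforce property (1) by factoring each $a_i$ over $\mathbb{F}_q[t]$ (randomized polynomial time), writing $a_i = s_i^2 r_i$ with $r_i$ square-free, and applying the substitution $x_i \mapsto x_i/s_i$. Next, for property (2), I would loop over the monic irreducible factors $f$ of $\prod r_i$ with $v_f(\prod r_i) \geq 3$. If $f$ divides all four coefficients, multiplying the form by $f^{-1}$ decreases $\deg(\det)$ by $4\deg f$ while preserving square-freeness. If $f$ divides exactly three, say $r_1,r_2,r_3$, with $r_i = fb_i$, then the ternary form $b_1x_1^2+b_2x_2^2+b_3x_3^2$ reduces modulo $f$ to a ternary form over the finite residue field $\mathbb{F}_q[t]/(f)$, hence is isotropic by Fact \ref{fact1}(2). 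A nontrivial zero $(c_1,c_2,c_3)$ with $\deg c_i < \deg f$ and $c_1\not\equiv 0 \pmod f$ is produced by a single square-root extraction in the residue field. The vector $v = c_1e_1+c_2e_2+c_3e_3$ then satisfies $Q(v)\equiv 0 \pmod{f^2}$, and passing to the basis $(v/f, e_2, e_3, e_4)$ over $\mathbb{F}_q(t)$ is a projective substitution that strictly decreases $v_f(\det)$. After each such step I re-diagonalize via Gram--Schmidt (Lemma \ref{GS}) and restore property (1) by per-variable scaling, then loop back.

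Once each prime divides at most two of the $r_i$, I would iterate property (3). For each prime $f$ dividing exactly two coefficients $r_i,r_j$ (with the remaining two being $r_k,r_l$), I check whether $-r_kr_l$ is a square modulo $f$, which is deterministic polynomial time by Remark \ref{square2}. If so, the condition at $f$ is satisfied. If not, I check whether $-(r_i/f)(r_j/f)$ is a square modulo $f$; by Lemma \ref{Lemma1''}, failure of both conditions means $Q$ is anisotropic over $\mathbb{F}_q(t)_{(f)}$, so I abort and report this. If only the second condition holds, I find $\sigma\in\mathbb{F}_q[t]$ of degree less than $\deg f$ with $\sigma^2\equiv -(r_i/f)/(r_j/f)\pmod f$, set $v = e_i + \sigma e_j$ so that $Q(v)\equiv 0 \pmod{f^2}$, and perform the same lattice-enlargement-plus-rediagonalization procedure on the pair $(x_i,x_j)$, which drops $v_f(\det)$ by $2\deg f$. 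Finally, to enforce property (4), I count the square leading coefficients; if fewer than two, I multiply the form by a fixed non-square in $\mathbb{F}_q$, flipping the square/non-square status of every leading coefficient and preserving properties (1)--(3).

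The main obstacle is establishing simultaneous termination and polynomial running time. The key invariant is $\deg(\det Q)$: every lattice-enlargement step strictly decreases it by at least $2\deg f$, and the intervening square-freeing and scaling steps do not increase it. Since the initial $\deg(\det)$ is polynomially bounded by the input size and each individual operation --- polynomial factoring over $\mathbb{F}_q[t]$, square-root extraction in a finite residue field, Gram--Schmidt via Lemma \ref{GS}, coefficient-wise scaling, and testing quadratic residues --- runs in (randomized) polynomial time, the whole algorithm terminates in randomized polynomial time. The accumulated product of projective substitutions together with any overall scalar multipliers is returned as the projective substitution required by the lemma.
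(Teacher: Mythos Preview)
Your high-level plan is sound, but the termination argument has a real gap, and the mechanism you chose is heavier than what is needed. After your lattice enlargement and Gram--Schmidt re-diagonalization, the new diagonal entries are in general only rational functions: in the property-(3) step on the pair $(x_i,x_j)$, the $2\times 2$ block diagonalizes to $P=(r_i+\sigma^2 r_j)/f^2$ and $b_ib_j/P$. To ``restore property (1) by per-variable scaling'' you must multiply the second entry by a square that clears the denominator $P$; this can \emph{increase} $\deg(\det Q)$, possibly by more than the $2\deg f$ you gained from the enlargement. Moreover the irreducible factors of $P$ now appear among the coefficients, so you must re-enter the loop for properties (2) and (3) at these new primes. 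Your stated invariant $\deg(\det Q)$ therefore does not decrease monotonically across a full iteration, and you have no replacement invariant that does. The same issue arises in your property-(2) step after the $4$-variable Gram--Schmidt.

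The paper sidesteps all of this by never leaving the class of diagonal forms with square-free polynomial coefficients: it uses only monomial substitutions $x_m\leftarrow f^{\pm 1}x_m$ together with global multiplication by $f^{\pm 1}$. For property (2), when $f$ divides exactly three coefficients and misses $a_1$, the substitution $x_1\leftarrow fx_1$ followed by division by $f$ gives the form $fa_1x_1^2+(a_2/f)x_2^2+(a_3/f)x_3^2+(a_4/f)x_4^2$, dropping $v_f(\det)$ from $3$ to $1$ with no new primes and no re-diagonalization. For property (3), when $-a_ka_l$ is a non-square mod $f$ but $-(a_i/f)(a_j/f)$ is a square, the substitution $x_i\leftarrow x_i/f$, $x_j\leftarrow x_j/f$ followed by multiplication by $f$ yields $\tfrac{a_i}{f}x_i^2+\tfrac{a_j}{f}x_j^2+fa_kx_k^2+fa_lx_l^2$; this keeps $\deg(\det)$ \emph{unchanged} but swaps which pair is divisible by $f$, so condition (3) at $f$ is now satisfied outright, again with no new primes. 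With these moves the whole procedure terminates after a single pass over the primes dividing $\det Q$, and no Gram--Schmidt is needed at all.
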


\begin{proof}
We factor each $a_i$. If for a monic irreducible polynomial $f$, $f^{2k}$ (where $k\geq 1$ ) divides $a_i$ then we substitute $x_i\leftarrow \frac{x_i}{f^k}$. By iterating this process through the list of primes dividing the $a_i$ we obtain a new equivalent diagonal quadratic form where all the coefficients are square-free polynomials.

Let $f$ be a monic irreducible polynomial in $\mathbb{F}_q[t]$ dividing the determinant of $Q$. If every $a_i$ is divisible by $f$ then we divide $Q$ by $f$. Now let us assume that $a_1$ is the only coefficient not divisible by $f$. Then we make the following substitution: $x_1\leftarrow fx_1$. This new form is still diagonal, and every coefficient is divisible by $f$. Moreover, $f^2$ divides exactly one of the coefficients. Divide the form by $f$. Then the multiplicity of $f$ in the determinant of the new form is exactly 1. If we do this for all monic irreducibles $f$, whose third power divides the determinant of $Q$, we obtain a new form whose determinant is cube-free.

Let us assume that each $a_i$ is square-free and that there exists a monic irreducible $f$ which divides exactly two of the $a_i$. We may assume that $f$ divides $a_1$ and $a_2$ but does not divide the other two coefficients. If $-a_3a_4$ is a square modulo $f$ we do nothing. If not, we do a change of variables $x_1\leftarrow x_1/f, x_2\leftarrow x_2/f$. If $-\frac{a_1}{f}\frac{a_2}{f}$ is not a square modulo $f$ then we can conclude that $Q$ is anisotropic in the $f$-adic completion by Lemma \ref{Lemma1''}. Otherwise we continue with the equivalent quadratic form $Q'(x_1,x_2,x_3,x_4)=\frac{a_1}{f}x_1^2+\frac{a_2}{f}x_2^2+fa_3x_3^2+fa_4x_4^2$. This is locally isotropic at $f$ due to Lemma \ref{Lemma1''}.

If the third condition is not satisfied then we multiply the quadratic form by a non-square element from $\mathbb{F}_q$.

Now we consider the running time of the algorithm. First we need to factor the determinant. There are factorisation algorithms which are randomized and run in polynomial time (\cite{Berlekamp}, \cite{CZ}). We might need a non-square element from $\mathbb{F}_q$. Such an element can be found by a randomized algorithm which runs in polynomial time. The rest of the algorithm runs in deterministic polynomial time (see Remark 1).
\end{proof}




The next lemma is the key observation for our main algorithm. 

\begin{lemma}\label{main1}
Assume that $a_1x_1^2+a_2x_2^2+a_3x_3^2+a_4x_4^2$ is an isotropic minimized quadratic form with the property that $a_ix_i^2+a_jx_j^2$ is anisotropic for every $i\neq j$. Let $D=a_1a_2a_3a_4$. Then there exists a permutation $\sigma\in S_4$, an $\epsilon\in \{0,1\}$ and a residue class $b$ modulo $D$ such that for every monic irreducible $a\in\mathbb{F}_q[t]$ satisfying $a\equiv b ~(mod ~D)$ and $deg(a)\equiv \epsilon~(mod ~2)$, the following equations are both solvable:

\begin{equation}
a_{\sigma(1)}x_{\sigma(1)}^2+a_{\sigma(2)}x_{\sigma(2)}^2=f_1\dots f_kg_1\dots g_la
\end{equation}
\begin{equation}
-a_{\sigma(3)}x_{\sigma(3)}^2-a_{\sigma(4)}x_{\sigma(4)}^2=f_1\dots f_kg_1\dots g_la
\end{equation}

Here $f_1,\dots,f_k$ are the monic irreducible polynomials dividing both $a_{\sigma(1)}$ and $a_{\sigma(2)}$. Also $g_1,\dots, g_l$ are the monic irreducibles dividing both $a_{\sigma(3)}$ and $a_{\sigma(4)}$. In addition, $b,\sigma$ and $\epsilon$ can be found by a randomized polynomial time algorithm.
\end{lemma}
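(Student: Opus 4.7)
The plan is to observe that equation (2) is solvable iff the ternary form $a_{\sigma(1)} x^2 + a_{\sigma(2)} y^2 - FGa z^2$ (with $F = f_1\cdots f_k$, $G = g_1\cdots g_l$) is isotropic over $\F_q(t)$, and similarly (3) iff $-a_{\sigma(3)} x^2 - a_{\sigma(4)} y^2 - FGa z^2$ is isotropic; the anisotropy assumption on every binary $a_ix^2+a_jy^2$ forces $z \ne 0$ in any isotropic vector of either ternary, so a common value $FGa$ reassembles into a zero of $Q$. I then use Theorem~\ref{HM} to reduce global isotropy of each ternary to local isotropy at every place, and translate each local condition into a constraint on $a$.

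At monic irreducibles $p$ coprime to $Da$, local isotropy is automatic by Fact~\ref{not divide}. At $p \mid D$ I apply Lemma~\ref{Lemma1}: if $p$ divides both coefficients on the same side of the split, the relevant ternary has $p$-adic valuations $(1,1,1)$ and is automatically isotropic by Lemma~\ref{Lemma1}(1), while on the other ternary the condition reduces via Definition~\ref{minimized}(3) to a square statement already supplied by the minimization. If $p$ divides exactly one $a_i$, or divides two on opposite sides of the split, Lemma~\ref{Lemma1}(2) yields a Legendre-symbol condition on $a \pmod p$; in the ``cross'' case, the two per-ternary conditions coincide precisely because minimization property (3) forces the corresponding Legendre symbols to differ by a square. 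Chinese Remaindering over $p \mid D$ yields a single constraint $a \equiv b_0 \pmod D$. At $p = a$ itself, Lemma~\ref{Lemma1}(2) requires $-a_{\sigma(1)} a_{\sigma(2)}$ and $-a_{\sigma(3)} a_{\sigma(4)}$ to be squares modulo $a$; by quadratic reciprocity over $\F_q[t]$ these translate into further congruences of $a$ modulo primes dividing $D$ (absorbed into $b$) together with a parity of $\deg(a)$ (recorded in $\epsilon$). Lemma~\ref{Lemma1'} handles infinity: since $FGa$ is monic, only $\deg(a) \pmod 2$ and the leading coefficients of the $a_{\sigma(i)}$ enter, again absorbed into $\epsilon$.

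The main obstacle is to show that the resulting system of congruences is nonempty, and this is exactly where the hypothesis that $Q$ itself is isotropic is used. By Theorem~\ref{HM} applied to $Q$, at every place $v$ there is a local zero of $Q$; unpacking the splitting, this produces a local value simultaneously represented by the two binary halves, and the minimization assumptions allow me to choose such a common local value of the form $FG \cdot u_v$ with $u_v$ a local unit (and with controlled leading coefficient at infinity). Reading off the residue class of $u_v$ modulo each $p \mid D$ and its degree parity at infinity produces a consistent candidate for $b$ and $\epsilon$. The permutation $\sigma$ is selected by enumerating the (at most $24$) possibilities so that the assembly of these local data is globally consistent; existence of such a $\sigma$ is again guaranteed by the global isotropy of $Q$. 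Algorithmically, one factors $D$ by a randomized polynomial-time algorithm such as \cite{Berlekamp}, decides each per-prime square condition using the exponentiation of Remark~\ref{square2}, enumerates $\sigma$ and $\epsilon \in \{0,1\}$, and assembles $b$ by CRT, all in randomized polynomial time.
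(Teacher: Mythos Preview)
Your outline shares the paper's overall architecture---reduce to local isotropy of the two ternary forms, analyse each place via Lemma~\ref{Lemma1}, assemble the per-prime quadratic-residue constraints by CRT---but it diverges from the paper at exactly the two places that matter, and both divergences leave real gaps.

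\textbf{The place $a$.} You propose to handle solvability at the new prime $a$ directly: the condition is that $-a_{\sigma(1)}a_{\sigma(2)}$ and $-a_{\sigma(3)}a_{\sigma(4)}$ be squares modulo $a$, and you want to convert these via quadratic reciprocity in $\F_q[t]$ into further congruences on $a$ modulo the primes of $D$. In principle this can be done, but you then have \emph{two} conditions on $\bigl(\tfrac{a}{p}\bigr)$ at each $p\mid D$: the one coming from local isotropy at $p$, and the one coming back from reciprocity. You never verify that these agree, and checking it amounts to reproving the product formula for Hilbert symbols. The paper sidesteps this entirely: it simply does not impose any condition at $a$, ensures local solvability of each ternary at \emph{every} place except possibly $a$, and then invokes Fact~\ref{Product formula} (a ternary form isotropic at all places but one is isotropic globally) to get the place $a$ for free. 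This is where the product formula, not raw reciprocity, is the right tool.

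\textbf{Consistency and the role of isotropy.} You claim the isotropy of $Q$ guarantees the system of congruences is nonempty, by taking a local zero of $Q$ at each $v$, extracting a common local value $c_v$, and asserting it can be chosen of the form $FG\cdot u_v$ with $u_v$ a unit. This step is not justified: a local zero gives $a_1u_1^2+a_2u_2^2=-(a_3u_3^2+a_4u_4^2)=c_v$, but there is no reason $v(c_v)=v(FG)$, nor that the residue class of $u_v$ matches the Legendre-symbol condition you wrote down earlier. In the paper's argument the congruence conditions at the finite primes $p\mid D$ are shown to be consistent \emph{unconditionally}, using only the minimization hypotheses (your observation about the ``cross'' case is correct and is exactly Definition~\ref{minimized}(3) at work). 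The isotropy of $Q$ is used only at the infinite place: after a case analysis on the degree parities of the $a_i$, the paper chooses $\sigma$ and $\epsilon$ so that both ternaries are isotropic at infinity, and the sole obstruction in that analysis---certain sign patterns of the leading coefficients---is ruled out by Lemma~\ref{Lemma1'''}, since those patterns would force $Q$ itself to be anisotropic at infinity. So $\sigma$ is not found by blind enumeration against a global consistency check, but is determined explicitly from the degree parities and leading coefficients.

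In short, your proof becomes correct once you replace the reciprocity-plus-assembly argument by: (i) handle all finite $p\mid D$ as you did (this part is fine and matches the paper); (ii) choose $\sigma,\epsilon$ by the explicit degree-parity case analysis using Lemma~\ref{Lemma1'} and Lemma~\ref{Lemma1'''}, invoking isotropy of $Q$ only to exclude the bad leading-coefficient configurations; (iii) conclude solvability at $a$ from Fact~\ref{Product formula} rather than from reciprocity.
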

\begin{remark}
The meaning of this lemma is that if we split the original quaternary form in an appropriate way into two binary quadratic forms then we can find this type of common value they both represent.
\end{remark}
\begin{proof}

First we show that with an arbitrary splitting into equations (2) and (3) we can guarantee local solvability (of equations (2) and (3)) everywhere by choosing $a$ in a suitable way except at infinity and at $a$. Then we choose $\sigma$ and $\epsilon$ in a way that local solvability is satisfied at infinity as well. Finally, Fact \ref{Product formula} shows local solvability everywhere.

For the first part we assume that $\sigma$ is the identity as this simplifies notations. 

Since $a_1x_1^2+a_2x_2^2$ or $a_3x_3^2+a_4x_4^2$ are anisotropic over $\mathbb{F}_q[t]$ the question whether equation (2) or (3) is solvable is equivalent to the following quadratic forms being isotropic over $\mathbb{F}_q(t)$:
\begin{equation}
a_1x_1^2+a_2x_2^2-f_1\dots f_kg_1\dots g_laz^2
\end{equation}
\begin{equation}
-a_3x_3^2-a_4x_4^2-f_1\dots f_kg_1\dots g_laz^2
\end{equation}

Due to the local-global principle (Theorem \ref{HM}) the quadratic forms (4) and (5) are isotropic over $\mathbb{F}_q(t)$ if they are isotropic locally everywhere. Hence equations (2) and (3) are solvable if and only if they are solvable locally everywhere.
\smallskip

Now we go through the set of primes excluding $a$ and infinity. We check local solvability at every one of them. We have 4 subcases for equation (2): the primes $f_i$; the primes $g_j$; primes dividing exactly one of $a_1$ and $a_2$; remaining primes. The list is similar for equation (3). First we show that (2) is solvable at all these primes.
\smallskip

\noindent{\textit{Solvability at the $f_i$}}

\smallskip
Equation (2) is solvable at any $f_i$ since we can divide by $f_i$ and obtain a quadratic form whose determinant is not divisible by $f_i$. By Fact \ref{not divide} this is solvable at $f_i$.
\smallskip

\noindent{\textit{Solvability at a prime $g$ which divides exactly one of $a_1$ and $a_2$}}

\smallskip
We may assume that $g$ divides $a_1$. Due to Lemma \ref{Lemma1} equation (2) is solvable in the $g$-adic completion if $a_2f_1\dots f_kg_1\dots g_la$ is a square modulo $g$ (meaning in the finite field $\mathbb{F}_q[t]/(g)$). Since $(\frac{a_2f_1\dots f_kg_1\dots g_l}{g})$ is fixed this gives the condition on $a$ that $(\frac{a}{g})=(\frac{a_2f_1\dots f_kg_1\dots g_l}{g})$. This can be thought of as a congruence condition on $a$ modulo $g$ (this gives a condition whether $a$ should be a square element modulo $g$ or not). Due to the Chinese Remainder Theorem these congruence conditions on $a$ can be satisfied simultaneously. This implies that $a$ has to be in one of certain residue classes modulo the product of these primes. We choose $a$ to be in one of these residue classes.

\smallskip
\noindent{\textit{Solvability at the $g_i$}}

Now consider equation (2) modulo the $g_i$. Note that due to minimization neither $a_1$ nor $a_2$ are divisible by the $g_i$. Hence equation (2) has a solution in the $g_i$-adic completion if and only if $-a_1a_2$ is a square modulo $g_i$. This is satisfied since we have a minimized quadratic form. 

\smallskip
\noindent{\textit{Solvability at the remaining primes}}
\smallskip

Solvability at these primes is satisfied by Fact \ref{not divide}.
\smallskip

Note that solvability of (2) holds independently of the choice of $a$ except for primes dividing exactly one of $a_1$ and $a_2$. Thus, in the analogous case of the solvability of (3) we have only to consider the case of primes which divide exactly one of $a_3$ and $a_4$. These impose congruence conditions again on $a$. A problem can occur if these congruence conditions are contradictory. We show that this cannot happen. Assume that a monic irreducible polynomial $g$ divides one of $a_1,a_2$ and one of $a_3,a_4$, say $a_1$ and $a_3$. By the previous discussion we have that in this case $-a_2af_1\dots f_kg_1\dots g_l$ should be a square modulo $g$ and that $a_4af_1\dots f_kg_1\dots g_l$ should be a square modulo $g$. These can always be satisfied by choosing $a$ to be in a suitable residue class modulo $g$ except if $-a_2a_4$ is not a square modulo $g$. However, this cannot happen since our form was minimized.
\medskip

Now we have proven that for any splitting, equations (2) and (3) are solvable locally everywhere for suitable primes $a$ except maybe at $a$ or at infinity.  We now choose $\sigma$ and the parity of the degree of $a$ in a way that both (2) and (3) are solvable at infinity. Then, by Fact \ref{Product formula}, (2) and (3) will be solvable at $a$ as well.

\medskip

First assume that all $a_i$ have odd degrees. Then we can pick $\sigma$ arbitrarily and we choose $a$ in a way that $f_1\dots f_kg_1\dots g_la$ has odd degree. Then both equations are solvable in $\mathbb{F}_q((\frac{1}{t}))$ by Lemma \ref{Lemma1'}, (1). 

\smallskip
Next assume that one coefficient is of even degree and all the others are of odd degree. Pick $\sigma$ in a way that $a_{\sigma(1)}$ is of even degree and the leading coefficient of $a_{\sigma(2)}$ is a square in $\mathbb{F}_q$. This can be achieved since we have a minimized quadratic form. Choose $a$ in a way that $f_1\dots f_kg_1\dots g_la$ has odd degree. Then equation (3) is solvable in $\mathbb{F}_q((\frac{1}{t}))$ due to the same reason as before. Equation (2) is also solvable due to Lemma \ref{Lemma1'}, (2).

\smallskip
Now assume that there are two odd degree coefficients and two even degree ones among the $a_i$. We have that at least two of the $a_i$ have a leading coefficient which is a square due to the fact that the form is minimized. We choose $\sigma$ in such a way that in equation (2) and (3) one coefficient is of odd degree and the other is of even degree. Assume $a_{\sigma(1)}$ and $-a_{\sigma(3)}$ are of odd degree. Let the leading coefficient of $a_i$ be $c_i$. If $c_{\sigma(1)}$ and $-c_{\sigma(3)}$ are both squares then we pick $a$ in a way that $f_1\dots f_kg_1\dots g_la$ has odd degree. If $c_{\sigma(2)}$ and $-c_{\sigma(4)}$ are both squares we pick $a$ in such a way that $f_1\dots f_kg_1\dots g_la$ has even degree. It may occur that $c_{\sigma(1)},c_{\sigma(2)},-c_{\sigma(3)},-c_{\sigma(4)}$ are all squares. In this case there is no degree constraint on $a$. In these two cases both equations are solvable at infinity by Lemma \ref{Lemma1'}. The only problem occurs if $c_{\sigma(1)}$ and $-c_{\sigma(3)}$ are not both squares and the same holds for $c_{\sigma(2)}$ and $-c_{\sigma(4)}$.

We distinguish two cases depending on whether $q\equiv 1~ (mod~ 4)$ or $q\equiv 3~ (mod~ 4)$. First suppose that $q\equiv 1~ (mod~ 4)$. In this case -1 is square element in $\F_q$. If neither $c_{\sigma(1)}$ nor $c_{\sigma(3)}$ is a square in $\F_q$ then $c_{\sigma(2)}$ and $c_{\sigma(4)}$ must be both squares. Therefore $-c_{\sigma(4)}$ is a square since -1 is a square and we have a contradiction because we assumed that one of $c_{\sigma(2)}$ and $-c_{\sigma(4)}$ is not a square. If neither $c_{\sigma(2)}$ nor $c_{\sigma(4)}$ is a square in $\F_q$ then $c_{\sigma(1)}$ and $c_{\sigma(3)}$ must be both squares which is again, a contradiction. The only problem occurs if exactly one of $c_{\sigma(1)}$ and $c_{\sigma(3)}$ is a square and the same is true for $c_{\sigma(2)}$ and $c_{\sigma(4)}$. However, in this case, the form $a_1x_1^2+a_2x_2^2+a_3x_3^2+a_4x_4^2$ is anisotropic by Remark \ref{inf}. 

Suppose that $q\equiv 3~ (mod~ 4)$. Note that in this case -1 is not a square in $\F_q$. If $c_{\sigma(1)}$ and $-c_{\sigma(3)}$ are non-squares then we have that $c_{\sigma(3)}$ is a square since -1 is not a square. Then let $\sigma'=\sigma\circ (13)$ (i.e., swap $a_{\sigma(1)}$ with $a_{\sigma(3)}$). Now $c_{\sigma'(1)}$ is a square and so is $-c_{\sigma'(3)}$, hence again we choose $a$ in a way that $f_1\dots f_kg_1\dots g_la$ has odd degree and equation (2) and (3) are solvable at infinity due to Lemma \ref{Lemma1'}. If $c_{\sigma(2)}$ and $-c_{\sigma(4)}$ are non-squares then the situation is essentially the same (let $\sigma'=\sigma\circ (24)$ and choose $a$ in a way that $f_1\dots f_kg_1\dots g_la$ has even degree). If exactly one of $c_{\sigma(1)}$ and $-c_{\sigma(3)}$ is a square and the same holds for $c_{\sigma(2)}$ and $-c_{\sigma(4)}$ then the form $a_1x_1^2+a_2x_2^2+a_3x_3^2+a_4x_4^2$ is anisotropic by Remark \ref{inf}. Indeed, $c_{\sigma(1)}$ and $c_{\sigma(3)}$ are either both squares or both non-squares and the same holds for $c_{\sigma(2)}$ and $c_{\sigma(4)}$.


The cases where there is 1 odd degree one or no odd degree ones amongst the $a_i$ are esentially the same when there are three odd degree ones, or all are of odd degree. 

This shows that choosing $\sigma$ in this way equations (2) and (3) are solvable locally everywhere, except maybe at $a$, hence are solvable over $\mathbb{F}_q(t)$ as well by Fact \ref{Product formula}.

We conclude by verifying that $b, \sigma$ and $\epsilon$ can be found by a polynomial time algorithm. The computation of a residue class $b$ involves finding non-square elements in finite fields and Chinese remaindering. Both can be accomplished in polynomial time, the first using randomization. Choosing $\sigma$ and $\epsilon$ can be achieved in constant time by looking at the parity of the degrees of the $a_i$.
\end{proof}

\begin{remark}
As seen in the proof there is not just one residue class $b$ modulo $D$ that would satisfy the necessary conditions. Assume that $D$ is divisible by $k$ different monic irreducible polynomials. Then $q^{deg(D)}/3^k$ is a lower bound on the number of appropriate residue classes. Indeed, since modulo each prime half of the non-zero residue classes are squares. However, we will not use this fact later on.
\end{remark}

\medskip

\subsection{The 5-variable case}

We consider a quadratic form $Q(x_1,x_2,x_3,x_4,x_5)=a_1x_1^2+a_2x_2^2+a_3x_3^2+a_4x_4^2+a_5x_5^2$, where the $a_i\in\mathbb{F}_q[t]$ are non-zero polynomials. 

\begin{lemma}\label{minimization2}
There exists a randomized polynomial time algorithm that returns a projectively equivalent diagonal quadratic form $Q'$ whose coefficients are square-free polynomials and whose determinant is cube-free, and a projective substitution which transforms $Q$ into $Q'$. 
\end{lemma}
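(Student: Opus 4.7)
The plan is to mirror the proof of Lemma~\ref{minimization1} but perform only its first two reduction steps (square-free coefficients and cube-free determinant). Since conditions (3) and (4) of Definition~\ref{minimized} are absent here, no local obstruction can arise and the procedure always succeeds; the only genuinely new ingredient compared with the quaternary case is that an irreducible factor of the determinant may divide exactly three coefficients, which calls for a slightly different reduction move.

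First, I would factor each $a_i$ using a randomized polynomial-time factorization algorithm such as Berlekamp's or Cantor--Zassenhaus (\cite{Berlekamp}, \cite{CZ}). Whenever $f^{2k}$ divides some $a_i$ for a monic irreducible $f$ and an integer $k\geq 1$, apply the projective substitution $x_i\leftarrow x_i/f^k$, which replaces $a_i$ by $a_i/f^{2k}$. Iterating over all square divisors produces an equivalent diagonal form with every coefficient square-free; each step strictly decreases $\deg(a_i)$.

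Next, factor the resulting determinant (the product of the $a_i$ up to squares) and, for every monic irreducible $f$ whose cube divides it, reduce the multiplicity of $f$. Because the $a_i$ are square-free, the multiplicity of $f$ in the determinant equals the number of indices $i$ with $f\mid a_i$, so $f$ divides at least three of the coefficients. If $f$ divides all five, divide $Q$ by $f$: the multiplicity drops from $5$ to $0$. If $f$ divides exactly four, say all except $a_i$, substitute $x_i\leftarrow fx_i$ and then divide $Q$ by $f$; the new coefficients $fa_i$ and $a_j/f$ for $j\neq i$ are all square-free, and the multiplicity of $f$ in the determinant drops to $1$. If $f$ divides exactly three, say all except $a_i$ and $a_j$, substitute $x_i\leftarrow fx_i$ and $x_j\leftarrow fx_j$ and divide $Q$ by $f$; the new coefficients $fa_i, fa_j$ together with $a_k/f$ for $k\notin\{i,j\}$ remain square-free, $f$ divides exactly two of them, and the multiplicity drops to $2$. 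Each of these moves alters only the multiplicity of $f$ in the coefficients and preserves all other factorizations, so the irreducible factors of the determinant can be processed independently; the total degree of the coefficient vector strictly decreases at each pass.

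For the complexity, the only randomized step is polynomial factorization; everything else is deterministic polynomial time and no degree blow-up occurs since every operation is degree-non-increasing. The composition of all the substitutions above is itself a projective substitution transforming $Q$ into the desired $Q'$, and can be tracked throughout the execution. I do not foresee a significant obstacle in filling in the details; the main point requiring attention is the case where $f$ divides exactly three coefficients, which has no counterpart in Lemma~\ref{minimization1} and is handled by the two-variable substitution described above.
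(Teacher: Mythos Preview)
Your argument is correct and follows essentially the same route as the paper: make the coefficients square-free by stripping even powers, then for each irreducible $f$ dividing at least three coefficients, multiply the remaining variables by $f$ and divide the whole form by $f$. The paper phrases the second step more tersely (``if at most 2 coefficients are not divisible by $f$ we do the same trick as in Lemma~\ref{minimization1}''), whereas you spell out the three sub-cases explicitly; in particular your observation that the case of exactly three divisible coefficients requires substituting \emph{two} variables is the detail the paper leaves implicit.
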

\begin{proof}

Making the  coefficients of $Q'$ square-free is done in a similar fashion as in Lemma \ref{minimization1}. If every coefficient is divisible by a monic irreducible $f$ we divide $Q$ by $f$. If at most 2 coefficients are not divisible by $f$ we do the same trick as in Lemma \ref{minimization1}. To implement this for every irreducible polynomial $f$, we need to factor the determinant. This can be achieved in polynomial time by a randomized algorithm \cite{Berlekamp}. All the other steps run in deterministic polynomial time.
\end{proof}

\medskip

Now we prove a lemma similar to Lemma \ref{main1}.
\begin{lemma}\label{main2}
Let $Q(x_1,x_2,x_3,x_4,x_5)=a_1x_1^2+a_2x_2^2+a_3x_3^2+a_4x_4^2+a_5x_5^2$, where $D=a_1a_2a_3a_4a_5$ is cube-free and all the $a_i$ are square-free polynomials from $\mathbb{F}_q[t]$. Suppose, $a_ix_i^2+a_jx_j^2+a_kx_k^2$ is anisotropic for every $1\leq i<j<k\leq 5$. Then there exists a permutation $\sigma\in S_5$, an $\epsilon\in \{0,1\}$ and a residue class $b$ modulo $D$ such that for every monic irreducible $a\in \mathbb{F}_q[t]$ satisfying $a\equiv b~ (mod ~ D)$ and $deg(a)\equiv \epsilon ~(mod~ 2)$ the following equations are both solvable:

\begin{equation}
a_{\sigma(1)}x_{\sigma(1)}^2+a_{\sigma(2)}x_{\sigma(2)}^2=f_1\dots f_ka
\end{equation}
\begin{equation}
-a_{\sigma(3)}x_{\sigma(3)}^2-a_{\sigma(4)}x_{\sigma(4)}^2-a_{\sigma(5)}x_{\sigma(5)}^2=f_1\dots f_ka
\end{equation}

Here $f_1,\dots,f_k$ are the monic irreducible polynomials dividing both $a_{\sigma(1)}$ and $a_{\sigma(2)}$. In addition, $b,\sigma$ and $\epsilon$ can be found by a randomized polynomial time algorithm.
\end{lemma}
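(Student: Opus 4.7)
The plan is to mirror the proof of Lemma~\ref{main1}. For any choice of permutation $\sigma \in S_5$ and monic irreducible $a \in \F_q[t]$ coprime to $D$, equations (6) and (7) are, respectively, equivalent to the isotropy over $\F_q(t)$ of the ternary form
\[
R_1 = a_{\sigma(1)} x^2 + a_{\sigma(2)} y^2 - f_1\cdots f_k\, a\, z^2
\]
and the quaternary form
\[
R_2 = -a_{\sigma(3)} x^2 - a_{\sigma(4)} y^2 - a_{\sigma(5)} w^2 - f_1\cdots f_k\, a\, z^2.
\]
By Theorem~\ref{HM}, it therefore suffices to arrange local isotropy of both forms at every completion of $\F_q(t)$.

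At finite primes $g \neq a$ the argument proceeds by a case analysis on how many of the $a_i$ are divisible by $g$. Cube-freeness of $D$ together with square-freeness of each $a_i$ imply this number is at most two, and each case either gives unconditional local isotropy (via Fact~\ref{not divide}, via Fact~\ref{fact1}(2) combined with Hensel lifting, or, when $g$ is some $f_j$, after dividing $R_1$ through by $g$), or, via Lemma~\ref{Lemma1} and Lemma~\ref{Lemma1''}, yields a quadratic residue condition on $a$ modulo $g$. As in Lemma~\ref{main1}, these conditions are mutually compatible and can be merged by the Chinese Remainder Theorem into a single residue class $b$ modulo $D$. A key observation is that whenever $g$ divides at most one of the coefficients $-a_{\sigma(3)}, -a_{\sigma(4)}, -a_{\sigma(5)}, -f_1\cdots f_k a$, local isotropy of $R_2$ at $g$ follows by setting the corresponding variable to zero and applying Fact~\ref{fact1}(2) plus Hensel to the resulting unit-coefficient ternary form; this handles most subcases for $R_2$ without any condition on $a$.

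At the prime $a$ itself, $R_2$ has only its last coefficient divisible by $a$; setting $z = 0$ yields a ternary form in unit coefficients, which is locally isotropic by Fact~\ref{fact1}(2) together with Hensel lifting. For the ternary $R_1$ we invoke Fact~\ref{Product formula}: once local isotropy is established at every other completion of $\F_q(t)$, isotropy at $a$ follows for free.

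The main obstacle is the choice of $\sigma$ and $\epsilon$ ensuring simultaneous local isotropy of both $R_1$ and $R_2$ at infinity. The plan is to case-split on the number of $a_i$'s of odd degree, and if necessary on $q \bmod 4$, in the spirit of the analogous step in Lemma~\ref{main1}, applying Lemma~\ref{Lemma1'} for $R_1$ and Lemma~\ref{Lemma1'''} (together with Remark~\ref{iso}) for $R_2$. A useful simplification in the quaternary case is that whenever three of $-a_{\sigma(3)}, -a_{\sigma(4)}, -a_{\sigma(5)}, -f_1\cdots f_k a$ share the same degree parity, Remark~\ref{iso} already provides local isotropy of $R_2$ at infinity. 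Existence of a monic irreducible $a$ realising the prescribed $b$ and $\epsilon$ is guaranteed by Fact~\ref{number}, and $b, \sigma, \epsilon$ can each be produced by a randomized polynomial-time algorithm combining Chinese remaindering, non-residue searches modulo the prime divisors of $D$, and inspection of the degree parities and leading coefficients of the $a_i$.
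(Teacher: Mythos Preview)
Your proposal is correct and follows essentially the same approach as the paper's proof. One simplification the paper exploits that you anticipate as harder than it is: in the five-variable case the infinity step is actually easier than in Lemma~\ref{main1}---a pure degree-parity argument suffices (place the coefficients so that the two or three $a_i$ of matching parity sit on the same side, and choose $\epsilon$ to align $\deg(f_1\cdots f_k a)$ with them, possibly setting one variable to zero), so no split on $q\bmod 4$ and no inspection of leading coefficients is ever needed.
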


\begin{remark}
Assuming that $a_ix_i^2+a_jx_j^2+a_kx_k^2$ is anisotropic for every $i,j,k$ allows us to consider the solvability of equations (6) and (7) as finding nontrivial zeros of the quadratic forms $a_{\sigma(1)}x_{\sigma(1)}^2+a_{\sigma(2)}x_{\sigma(2)}^2-f_1\dots f_kaz^2$ and $-a_{\sigma(3)}x_{\sigma(3)}^2-a_{\sigma(4)}x_{\sigma(4)}^2-a_{\sigma(5)}x_{\sigma(5)}^2-f_1\dots f_kaz^2$ hence we can use our lemmas and theorems from the previous sections.
\end{remark}
\begin{proof}

First we show that for any $\sigma\in S_5$ equation (6) is solvable for suitable $a$ at any prime except maybe at infinity and at $a$. Also if $a$ is suitably chosen then equation (7) is solvable everywhere except maybe at infinity. In order to simplify notation we can assume that $\sigma$ is the identity.
\smallskip

First consider equation (6). It is solvable at any of the $f_i$ since $a_1$ and $a_2$ are square-free (Lemma \ref{Lemma1}). It is solvable at any prime not dividing $a_1a_2f_1\dots f_ka$ by Fact \ref{not divide}. Let $g$ be a prime that divides $a_1$ but not $a_2$. In order to ensure that (6) is solvable in the $g$-adic completion $-a_2af_1\dots f_k$ has to be a square modulo $g$. This imposes a congruence condition on $a$. The situation is the same when looking at a prime dividing $a_2$ but not $a_1$.
\smallskip

Now consider equation (7). Again if a prime does not divide any of the coefficients then the equation is locally solvable at that prime. The equation is solvable at every $f_i$ (using (1) of Lemma \ref{Lemma1} with $z=0$) since none of the $f_i$ divide $a_3,a_4,a_5$. Similarly it is also solvable at $a$ because we choose $a$ to differ from the primes occurring in $a_3a_4a_5$. If a prime $g$ divides exactly one of $a_3,a_4,a_5$ then similarly the equation is locally solvable at that prime. Finally consider the case where a prime $h$ divides exactly two out of $a_3,a_4,a_5$ (say $a_3$ and $a_4$). This gives a congruence condition on $a$. Specifically, $-af_1\dots f_ka_5$ has to be a square modulo $h$. Note that since for every prime $f$, $f^3$ does not divide the determinant of the original quadratic form, the congruence conditions on $a$ coming from equations (6) and (7) cannot be contradictory.
\smallskip

Now we choose $\sigma$ and $\epsilon$ in a way that both (6) and (7) become solvable at infinity at the cost of possibly restricting the parity of the degree of $a$. Then by Fact \ref{Product formula} equation (6) will become solvable at $a$ as well. Finally by the local-global principle (Theorem \ref{HM}) both equations are solvable over $\mathbb{F}_q[t]$.

First if all $a_i$ have odd degree then $\sigma$ can be chosen arbitrarily and we choose $a$ in a way that $f_1\dots f_ka$ has odd degree. This way both equations are solvable at infinity by Lemma \ref{Lemma1'}, (1).
\smallskip

Now consider the case where one coefficient has even degree and the others are of odd degree. Then we choose $\sigma$ in a way that $a_{\sigma(3)}$ has even degree and the others are of odd degree. We choose $a$ in a way that $f_1\dots f_ka$ has an odd degree. Due to Lemma \ref{Lemma1'} both equations are solvable at infinity (with $x_{\sigma(3)}=0$).
\smallskip

Finally assume that there are two $a_i$-s with even degree. We choose $\sigma$ in a way that $a_{\sigma(1)}$ and $a_{\sigma(2)}$ are of even degree. We choose $a$ in such a way that $f_1\dots f_ka$ has even degree. Now equations (6) and (7) are solvable at infinity. The remaining cases are essentially the same, we systematically swap "odd" and "even" in the preceding arguments. 

Note that $b, \sigma$ and $\epsilon$ can be found in polynomial time using randomization by the same reasoning as described at the end of the proof of Lemma \ref{main1}. 
\end{proof}

\newpage
\section{The main algorithms}

In this section we describe two algorithms. One for solving a quadratic equation in 4 variables and one for 5 variables. The algorithms are similar, however, the second uses the first algorithm. The idea of the algorithms is the following. Split the original equation into two and find a common value they both represent and then solve the two equations.

The input of the first algorithm is a diagonal quadratic form $$Q(x_1,x_2,x_3,x_4)=a_1x_1^2+a_2x_2^2+a_3x_3^2+a_4x_4^2,$$ where all $a_i$ are in $\mathbb{F}_q[t]$.

\hrulefill
\begin{algorithm}[Quaternary case]\label{alg1}
\medskip
\begin{enumerate}[\rm(1)]

\item Minimize $Q$ using the algorithm from Lemma \ref{minimization1}. Minimization either yields that $Q$ is anisotropic (then stop) or returns a new projectively equivalent quadratic form $Q'(x_1,x_2,x_3,x_4)=b_1x_1^2+b_2x_2^2+b_3x_3^2+b_4x_4^2$ which is minimized. If $b_1,b_2,b_3,b_4\in\F_q$ then return a non-trivial zero of $Q'$ using the algorithm of $\cite{W}$.

\item Check solvability at infinity (Remark \ref{iso} and Lemma \ref{Lemma1'''}). Check if $b_ix_i^2+b_jx_j^2$ is isotropic for every pair $i\neq j$. If it is for a pair $(i,j)$ then return a solution.

\item Split the quadratic form into equations (2) and (3) (i.e., find a suitable permutation $\sigma\in S_4$) as discussed in Lemma \ref{main1}.

\item List the congruence conditions on $a$ as described in Lemma \ref{main1} and solve this system of linear congruences. Obtain a residue class $b$ modulo $b_1b_2b_3b_4$ as a result.

\item Let $d$ be the degree of $b_1b_2b_3b_4$ and let $N=4d$ or $N=4d+1$ depending on the degree parity $\epsilon$ we need by Lemma \ref{main1}. Pick a random polynomial $f$ of degree $N$ of the residue class $b$ modulo $b_1b_2b_3b_4$ and check whether it is irreducible. If $f$ is irreducible, then proceed. If not, then repeat this step.

\item Solve equations (2) and (3) using the method of \cite{Cremona1}.

\item By subtracting equation (3) from equation (2) find a non-trivial zero of $Q'$.

\item Return a non-trivial zero of $Q$ using the inverse substitutions of the substitutions obtained by the algorithm from Lemma \ref{minimization1}.
\end{enumerate}
\hrulefill
\end{algorithm}

\medskip

The input of the second algorithm is a quadratic form $$Q(x_1,x_2,x_3,x_4,x_5)=a_1x_1^2+a_2x_2^2+a_3x_3^2+a_4x_4^2+a_5x_5^2,$$ where all $a_i$ are non-zero polynomials in $\mathbb{F}_q[t]$.
\medskip

\hrulefill

\begin{algorithm}
\medskip
\begin{enumerate}[\rm(1)]

\item Minimize $Q$ using the algorithm from Lemma \ref{minimization2}. Minimization returns a new projectively equivalent diagonal quadratic form $Q'(x_1,x_2,x_3,x_4,x_5)=b_1x_1^2+b_2x_2^2+b_3x_3^2+b_4x_4^2+b_5x_5^2$ whose determinant is cube-free and whose coefficients are square-free. If $b_1,b_2,b_3,b_4,b_5\in\F_q$ then return a non-trivial zero of $Q'$ using the algorithm of $\cite{W}$.

\item Split the quadratic form into equations (6) and (7) as discussed in the proof of Lemma \ref{main2}. Check if the quadratic forms on the left-hand side of equations of (6) and (7) are isotropic or not. If one of them is, then return a non-trivial solution. Use the algorithm from \cite{Cremona1}.

\item List the congruence conditions on $a$ as described in Lemma \ref{main2} and solve this system of linear congruences. Obtain a residue class $b$ modulo $b_1b_2b_3b_4b_5$ as a result.

\item Let $d$ be the degree of $b_1b_2b_3b_4b_5$ and let $N=4d$ or $N=4d+1$ according to degree parity $\epsilon$ we need by Lemma \ref{main2}. Pick a random polynomial $f$ of degree $N$ of the residue class $b$ modulo $b_1b_2b_3b_4b_5$ and check whether it is irreducible. If $f$ is irreducible,  then proceed. If not, then repeat this step.

\item Solve equations (6) and (7) using the method of \cite{Cremona1} and Algorithm \ref{alg1}.

\item By subtracting equation (3) from equation (2) find a non-trivial zero of $Q'$.

\item Return a non-trivial zero of $Q$ using the inverse substitutions of the substitutions obtained by the algorithm from Lemma \ref{minimization2}.

\end{enumerate}
\hrulefill
\end{algorithm}

\begin{theorem}\label{main}
Algorithm 1 and Algorithm 2 are randomized algorithms of Las Vegas type which run in polynomial time in the size of the quadratic form (the largest degree of the coefficients) and in $\log~ q$. Let $D$ be the determinant of the quadratic form. Let $d=deg(D)$. Algorithm 1 either detects that the form is anisotropic or returns a solution of size $O(d)$, that is an array of 4 polynomials of degree $O(d)$. Algorithm 2 always returns a solution of size $O(d)$, that is an array of 5 polynomials of degree $O(d)$.
\end{theorem}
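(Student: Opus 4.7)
The plan is to verify separately (i) correctness, (ii) expected polynomial running time, and (iii) degree bounds on the output, for each of the two algorithms. Correctness follows almost directly from the preceding lemmas: Lemma~\ref{minimization1} (resp.\ Lemma~\ref{minimization2}) guarantees that step~(1) either detects anisotropy or returns a projectively equivalent minimized form $Q'$. Lemmas~\ref{main1} and~\ref{main2} then guarantee that, once an irreducible $a$ of the prescribed degree parity and residue class is chosen, equations~(2)--(3) (resp.\ (6)--(7)) are simultaneously solvable, and by Fact~\ref{Product formula} and Theorem~\ref{HM} they are solvable over $\F_q(t)$. Subtracting the two solutions, which share the same right-hand side, yields a non-trivial zero of $Q'$; inverting the projective substitution then recovers a zero of $Q$. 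Algorithm~2 invokes Algorithm~1 to handle the quaternary equation~(7), whereas Algorithm~1 itself relies only on the ternary procedure of Cremona--van Hoeij, so there is no circular dependence.

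For the running time, all deterministic steps (solving the Chinese-Remainder system that defines $b$, linear algebra, degree bookkeeping) are polynomial in $d$ and $\log q$. The sub-routines used inside minimization -- factoring polynomials over $\F_q$, locating a non-square in $\F_q$, extracting a square root -- are standard Las Vegas procedures, as is the ternary solver of \cite{Cremona1} and irreducibility testing over $\F_q[t]$. The only step requiring a non-trivial probabilistic estimate is step~(5) of Algorithm~1 (resp.\ step~(4) of Algorithm~2), where one samples a random polynomial of degree $N=\Theta(d)$ in a fixed invertible residue class modulo $D$ until an irreducible is found. I would bound the success probability using Fact~\ref{number} with $m=D$ and $M=d$: the number of irreducibles of degree exactly $N$ in the chosen class is at least
\[
\frac{q^N}{\Phi(D)\,N}\;-\;\frac{(d+1)\,q^{N/2}}{N},
\]
and since $\Phi(D)\le q^d$, for $N\ge 2d+O(\log_q d)$ (in particular for $N=4d$) this is $\Omega(q^{N-d}/N)$, while the total number of monic polynomials of degree $N$ in the class is $O(q^{N-d})$. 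Hence a single trial succeeds with probability $\Omega(1/N)=\Omega(1/d)$ and $O(d)$ trials suffice in expectation.

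For the size bound I would simply track degrees through the algorithm. After minimization the coefficients $b_i$ are square-free of degree $O(d)$. The common value $f_1\cdots f_k g_1\cdots g_l\,a$ (resp.\ $f_1\cdots f_k\,a$) has degree $O(d)$ by the choice $N=\Theta(d)$, so the auxiliary ternary forms (4)--(5) and their 5-variable analogues have coefficients of degree $O(d)$; Fact~\ref{size} then delivers solutions with components of degree $O(d)$. In Algorithm~2 the quaternary sub-equation~(7) is passed to Algorithm~1, whose output is also of degree $O(d)$ by the 4-variable analysis. Subtraction and inversion of the projective substitution from Lemma~\ref{minimization1}/\ref{minimization2} preserve this bound, giving a final zero of $Q$ whose components are polynomials of degree $O(d)$.

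The main technical obstacle is the balancing of parameters in the probabilistic step: one must verify that the error term $(d+1)q^{N/2}/N$ in Fact~\ref{number} is dominated by the main term $q^N/(\Phi(D)N)$ for $N$ of order $d$, which is essentially the inequality $q^{N/2-d}\gg d+1$; this is what forces the choice $N=4d$ (or $4d+1$) rather than a tighter value. A secondary subtlety is checking that the parity of $\deg a$ dictated by Lemmas~\ref{main1} and~\ref{main2} is compatible with the Chinese-Remainder constraints on $a$ modulo $D$ -- this is precisely why two values $N=4d$ and $N=4d+1$ are kept available -- and that every intermediate ternary form passed to the Cremona--van Hoeij solver has coefficients within the degree range assumed in Fact~\ref{size}.
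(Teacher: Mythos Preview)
Your proposal is correct and follows essentially the same approach as the paper: correctness via Lemmas~\ref{main1}/\ref{main2}, the probabilistic analysis of step~(5) via Fact~\ref{number} with the choice $N\in\{4d,4d+1\}$ so that the main term $q^N/(\Phi(D)N)\ge q^{N-d}/N$ dominates the error $(d+1)q^{N/2}/N$, and the size bound via Fact~\ref{size}. The paper carries out the same calculation with the explicit constant $S_N(b,D)/q^{N-d}\ge 1/(3N)$ (using $(d+1)/q^d\le 2/3$ for $q\ge 3$), but otherwise your argument matches it point for point.
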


\begin{proof}

The correctness of the algorithms follows from Lemmas \ref{main1} and \ref{main2}. We start analyzing the running times of the algorithms. First we deal with Algorithm 1. We consider its running time step by step.
The first part of Step 1 runs in polynomial time (is however randomized) as proven in Lemma \ref{minimization1}. The second part of Step 1 is deterministic and runs in polynomial time (see \cite{W}). From now on we suppose that the determinant of the minimized form has degree at least 1. The first part of Step 2 can be executed in deterministic polynomial time (using Fact \ref{fact1} combined with Lemma \ref{Lemma1'} and \ref{Lemma1'''}). The second part is checking whether a polynomial is a square due to Fact \ref{fact1}. This can be done in polynomial time by computing the square-free factorization of the polynomial (\cite{Yun}) and checking whether the leading coefficient is a square or not (Remark \ref{square2}). Step 3 runs in deterministic polynomial time since we only need to check whether certain leading coefficients are squares in $\mathbb{F}_q$ or not. In Step 4 in order to obtain congruence conditions we may have to present a non-square element in a finite field (an extension of $\mathbb{F}_q$ which has degree smaller than the determinant of $Q'$). This can be done by a randomized algorithm which runs in polynomial time. Note that the probability that a non-zero random element in a finite field of odd characteristic is a square is 1/2. In the other part of Step 4 we have to solve a system of linear congruences. This can be done in deterministic polynomial time by Chinese remaindering.

Step 5 needs more explanation. After solving the linear congruences we obtain a residue class $b$ modulo $D$ (Lemma \ref{main1}). By Fact \ref{number} we have that (note that $d\geq 1$):
\begin{equation*}
\left|S_{N}(b,D)-\frac{q^N}{\Phi(D)N}\right|\leq \frac{1}{N}(d+1)q^{\frac{N}{2}}.
\end{equation*} 

We choose the degree of $a$ to be $N=4d$ or $N=4d+1$ depending on the parity we need for the degree of $a$ which is discussed in the proof of Lemma \ref{main1}. We give an estimate on the probability that a polynomial in this given residue class is irreducible. We have the following:
$$\frac{S_{N}(b,D)}{q^{N-d}}\geq \frac{q^N}{q^{N-d}\Phi(D)N}-\frac{(d+1)q^{\frac{N}{2}}}{Nq^{N-d}}\geq \frac{1}{N}-\frac{d+1}{Nq^{\frac{N}{2}-d}}\geq \frac{1}{N}-\frac{d+1}{Nq^d}\geq \frac{1}{3N}$$


Here we used the fact that $\frac{d+1}{q^{d}}\leq 2/3$ since $q\geq 3$ and the function $\frac{d+1}{q^{d}}$ is decreasing (as a function of $d$). We also used that $q^d\geq\Phi(D)$. 

We pick a uniform random monic element $a$ from the residue class $b$ modulo $D$. This can be done in the following way. We pick a random polynomial $r(t)\in\mathbb{F}_q[t]$ of degree $N-d$ whose leading coefficient is the inverse of the leading coefficient of $D$. We consider the polynomial $r':=rD+b$. Then $r'$ has degree $N$, is monic and is congruent to $b$ modulo $D$.

The probability that $a$ is irreducible is at least $1/3N$ by the previous calculation. Irreducibility can be checked in deterministic polynomial time \cite{Berlekamp}. This means that the probability that we do not obtain an irreducible polynomial after $3N$ tries is smaller than $1/2$. Hence this step runs in polynomial time (it is, however, randomized).

The last two steps use the algorithm from \cite{Cremona1}. This algorithm is randomized and runs in polynomial time.

The discussion for Algorithm 2 is similar.

Now we turn to the question of the size of solutions. First we consider Algorithm 1. The previous discussion shows that $N$ (the degree of $a$) can be chosen to be of size $O(d)$ . Finally when solving equations (2) and (3) we use the algorithm from \cite{Cremona1}. By Fact \ref{size} we obtain that the solution for (2) and (3) have size $O(d)$.
In the case of Algorithm 2 the same reasoning is valid, except that we have to use Algorithm 1 for solving (7).
\end{proof}

\begin{remark}
Due to Fact \ref{fact2} and Theorem \ref{HM} we have that every quadratic form in 5 or more variables is isotropic over $\mathbb{F}_q(t)$. Hence Algorithm naturally works for diagonal quadratic forms in more than 5 variables. Indeed, we set some variables to zero and use Algorithm 2.
\end{remark}
\begin{corollary}\label{not diagonal}
Assume that $Q$ is a regular quadratic form (not necessarily diagonal) in either 4 or 5 variables. Let $D$ be the determinant of $Q$. Let $d_1$ be the largest degree of all numerators of entries of the Gram matrix of $Q$. Let $d_2$ be the largest degree of all denominators of entries of the Gram matrix of $Q$. Then there is randomized polynomial time algorithm which finds a non-trivial zero of $Q$ of size $O(d_1+d_2)$.
\end{corollary}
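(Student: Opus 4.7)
The plan is to reduce the problem to Algorithm 1 or Algorithm 2 by first diagonalizing $Q$ in a controlled way. First I would apply Lemma \ref{GS} to the quadratic space associated with $Q$, which yields in deterministic polynomial time an orthogonal basis $w_1,\dots,w_n$ (where $n\in\{4,5\}$) together with a transition matrix $T\in GL_n(\F_q(t))$ expressing the $w_i$ in terms of the standard basis. By Lemma \ref{GS}, each diagonal coefficient $c_i=h(w_i,w_i)$ can be written as a fraction $p_i/q_i$ with $\deg p_i,\deg q_i=O(n(d_1+d_2))=O(d_1+d_2)$, since $n$ is bounded by a constant. The entries of $T$ obey the same degree bound, as they arise from Cramer's rule applied to the same Gram data.

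Next I would reduce to polynomial coefficients. The substitution $x_i\leftarrow q_i x_i$ transforms $\sum_i c_i x_i^2$ into the diagonal form $Q'(x_1,\dots,x_n)=\sum_i p_i q_i x_i^2$, whose coefficients lie in $\F_q[t]$ and have degree $O(d_1+d_2)$. Consequently the degree of the determinant of $Q'$ is $O(d_1+d_2)$.

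Now I would invoke Algorithm 1 (for $n=4$) or Algorithm 2 (for $n=5$) on $Q'$. By Theorem \ref{main}, this runs in randomized polynomial time and either detects that $Q'$ is anisotropic (only possible when $n=4$, and equivalent to anisotropy of $Q$ since the preceding transformations were invertible projective substitutions) or returns a non-trivial zero $(y_1,\dots,y_n)$ of $Q'$ whose entries are polynomials of degree $O(\deg\det Q')=O(d_1+d_2)$. Back-substituting $x_i=q_i y_i$ produces a non-trivial zero of $\sum c_i x_i^2$ in the basis $w_1,\dots,w_n$, and then applying the transition matrix $T$ yields a non-trivial zero of $Q$ in the original coordinates.

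Finally, I would verify the size bound. Each back-substitution step multiplies coordinates by polynomials or rational functions of degree $O(d_1+d_2)$, and there are only $O(1)$ such steps since $n$ is constant; hence the final solution has entries whose numerators and denominators are of degree $O(d_1+d_2)$. If desired, clearing the common denominator (which itself is a product of $O(1)$ factors each of degree $O(d_1+d_2)$) produces polynomial coordinates of the same asymptotic size. The only genuine technical point is the degree bookkeeping through Gram-Schmidt and Cramer's rule, but this is already supplied by Lemma \ref{GS}; the rest is routine bookkeeping combining that lemma with Theorem \ref{main}.
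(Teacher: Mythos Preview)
Your proposal is correct and follows essentially the same approach as the paper: diagonalize via Lemma~\ref{GS}, clear denominators to obtain polynomial coefficients whose determinant has degree $O(d_1+d_2)$, then apply Algorithm~1 or~2 and invoke Theorem~\ref{main}. You supply somewhat more detail than the paper (the explicit substitution $x_i\leftarrow q_i x_i$, the back-substitution through the transition matrix, and the handling of the anisotropic case for $n=4$), but the argument is the same.
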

\begin{proof}
First we diagonalize $Q$ using Lemma \ref{GS}. As a result we obtain a quadratic form with determinant $D'$. The degree of the numerator and the denominator of $D'$ are both of size $O(d_1+d_2)$. By clearing the denominators we obtain a quadratic form $Q''$ with polynomial coefficients and determinant of degree $O(d_1+d_2)$. Using Algorithm 1 or 2 (depending on the dimension) we find an isotropic vector. By Theorem \ref{main} the size of the solution vector is $O(d_1+d_2)$.
\end{proof}
\begin{remark}
Corollary \ref{not diagonal} can be extended to higher dimensions as well. We diagonalize the quadratic form and then set all $x_i$ to zero except 5. Then apply Algorithm 2. Due to diagonalization the size of the solution in this case is $O(n(d_1+d_2))$.
\end{remark}



\section{Equivalence of quadratic forms}

In this section we use the algorithms from the previous sections to 
compute the following: the Witt decomposition of a quadratic form, 
a maximal totally isotropic subspace and the transition matrix for
two equivalent quadratic forms. We use a presentation in the context
of quadratic spaces. We assume that a quadratic space is input by
the Gram matrix with respect to a basis.

\begin{theorem}\label{Wittalg}
Let $(V,h)$ be a regular quadratic space, 
$V=\mathbb{F}_q(t)^n$. 
There exists a randomized polynomial time algorithm which 
finds a Witt decomposition of $(V,h)$.
\end{theorem}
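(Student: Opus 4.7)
The plan is to produce the Witt decomposition by iteratively stripping off hyperbolic planes from $(V,h)$, handling the remaining anisotropic piece at the end. At each stage I maintain a regular quadratic space of strictly decreasing dimension and accumulate a list of extracted hyperbolic planes; after at most $\lfloor n/2 \rfloor$ iterations I am left with the anisotropic part $V_a$. Since the original space is regular, the radical $V_0$ in Theorem \ref{Witt} is zero, so the decomposition consists only of the hyperbolic and anisotropic parts.

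To begin an iteration on a current space of dimension $m$, I first diagonalize it using Lemma \ref{GS}, so that $h$ is represented by a diagonal Gram matrix; clearing a common denominator lets me assume the entries are polynomials. Next I need to decide isotropy and, when possible, produce a non-trivial zero. If $m=1$ the space is anisotropic and I stop; if $m=2$, Fact \ref{fact1}(1) decides whether the form is a hyperbolic plane; if $m=3$, I apply the Cremona--van Hoeij algorithm via Fact \ref{size}; if $m=4$, I run Algorithm \ref{alg1}, which either certifies anisotropy at a prime or returns an isotropic vector; finally, if $m\ge 5$, I set all but five of the variables to zero and apply Algorithm 2 to the resulting subform, which is guaranteed to be isotropic by Fact \ref{fact2} combined with Theorem \ref{HM}.

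Once an isotropic vector $v$ has been found, a hyperbolic plane containing it is produced as follows. Since $(V,h)$ is regular and $v\ne 0$, there is a basis vector $e$ with $h(v,e)\ne 0$, found by simple linear algebra. Then $\{v,w\}$ spans a hyperbolic plane, where $w := e - \frac{h(e,e)}{2\, h(v,e)}\, v$ satisfies $h(w,w)=0$ and $h(v,w)=h(v,e)\ne 0$; rescaling $w$ gives $h(v,w)=1$. I record the hyperbolic plane $H=\mathbb{F}_q(t)v \oplus \mathbb{F}_q(t)w$, compute its orthogonal complement $U=H^{\perp}$ by solving a system of two linear equations on $V$, and iterate on $(U,h|_U)$. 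Because $H$ is regular, $V=H\oplus U$ with $U=H^{\perp}$, and $U$ is again a regular quadratic space of dimension $m-2$. The uniqueness assertion in Theorem \ref{Witt} then guarantees that the resulting orthogonal decomposition is a Witt decomposition.

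The main obstacle is controlling the sizes of the Gram matrix entries across iterations, since repeated orthogonalizations and passages to orthogonal complements could in principle cause polynomial degrees to grow rapidly. The remedy is exactly Lemma \ref{GS}: at the start of every iteration it re-normalizes the representation so that the numerator and denominator degrees in the new diagonal Gram matrix are only linearly larger than those of the input Gram matrix for that iteration. Combined with the size bounds on isotropic vectors from Theorem \ref{main} and Corollary \ref{not diagonal}, an induction on the number of iterations shows that all intermediate Gram matrices, together with the coefficients of the computed basis vectors, remain polynomial in the original input size and in $\log q$, yielding the desired randomized polynomial-time algorithm.
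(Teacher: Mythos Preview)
Your overall strategy---iteratively peel off hyperbolic planes using the isotropy algorithms and finish with an anisotropic remainder---is the right idea, and the individual steps (diagonalize, find an isotropic vector in a five-variable subform, complete to a hyperbolic plane, pass to the orthogonal complement) are all correct. The gap is in the running-time analysis.

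You perform $\lfloor n/2\rfloor$ iterations, and at each one you invoke Lemma~\ref{GS} and the isotropy algorithms; these produce outputs whose degree bounds are a \emph{constant multiple} of the input degree bounds (Lemma~\ref{GS} in bounded dimension and Theorem~\ref{main}/Corollary~\ref{not diagonal} both give $O(\cdot)$ estimates with hidden constants strictly larger than~$1$). Compounding a constant factor $c>1$ over $\Theta(n)$ iterations yields degree bounds of order $c^{\,n}\cdot(\Delta+\Delta')$, which is exponential in $n$. Your sentence ``an induction on the number of iterations shows that all intermediate Gram matrices \ldots\ remain polynomial in the original input size'' asserts exactly the point that needs proof, and with a linear number of rounds it does not follow from the cited lemmas.

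The paper fixes this by reorganizing the loop so that there are only $O(\log n)$ rounds. After a single global diagonalization, one partitions the orthogonal basis into blocks of size~$5$, applies Algorithm~2 to every block \emph{in parallel} (all blocks share the same current degree bound), and obtains a hyperbolic plane plus a $3$-dimensional leftover in each block. The leftovers are then re-grouped into new blocks of size~$5$ and the process repeats. Since the remaining dimension drops by a factor $3/5$ per round, only $O(\log n)$ rounds are needed; a constant-factor blowup per round gives a total blowup of $c^{\,O(\log n)}=n^{O(1)}$, which is polynomial. To repair your argument you should either adopt this batching, or else give an explicit argument that the degree growth per iteration is additive rather than multiplicative---the cited bounds do not give that.
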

\begin{proof}
First we find an orthogonal basis using Lemma \ref{GS}. 
This basis can be used to decompose the space into the orthogonal sum 
of subspaces of dimension 5 and possibly one quadratic form of dimension 
at most 4, each with an already computed
orthogonal basis.
In every 5 dimensional subspace we find an isotropic vector using Algorithm 2. Then we find a hyperbolic plane in each of these subspaces. The subspace generated by this isotropic vector and one of the basis elements from the orthogonal basis of the subspace will be suitable because otherwise $h$ would not be regular restricted to this subspace. We compute its orthogonal complement inside this 5 dimensional subspace. These are all of dimension 3. We find
an orthogonal basis in
each of these 3 dimensional subspaces using Lemma \ref{GS}. For their direct sum we
again have an orthogonal basis and we iterate the process (we again group by 5 and find hyperbolic planes). We have that $V$ is the orthogonal sum of hyperbolic planes and a subspace of dimension at most 4. Using Algorithm 1 
for the quaternary case, the algorithm from \cite{Cremona1} for the ternary
case, and the method of Subsection~\ref{subsec:effective-2-3}
if the dimension is 2, we either conclude that it is anisotropic 
or find a decomposition into hyperbolic planes and anisotropic part.

Now consider the running time of the algorithm. Assume that $h$ was given by a Gram matrix where the maximum degree of the numerators is $\Delta$ and the maximum degree of the denominators is $\Delta'$. Diagonalization is done in polynomial time via Lemma \ref{GS}. Also, it produces a diagonal Gram matrix where every numerator and denominator has degree at most $n(\Delta+\Delta')$. Afterwards we only diagonalize in dimension at most 5. Hence in each step the degrees only grow by a constant factor by Corollary \ref{not diagonal}. The number of iterations is
$O(\log n)$ so the algorithm will run in polynomial time (it is however randomized since Algorithm 1 and 2 are randomized).
\end{proof}
\begin{corollary}
Let $h$ be a regular bilinear form on the vector space $V=\mathbb{F}_q(t)^n$. Then, there exists a randomized polynomial time algorithm which finds a maximal totally isotropic subspace for $h$.
\end{corollary}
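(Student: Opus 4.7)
The plan is to bootstrap from Theorem \ref{Wittalg}. Running that algorithm on $(V,h)$ produces, in randomized polynomial time, a Witt decomposition
\[
V = H_1 \perp H_2 \perp \cdots \perp H_m \perp V_a,
\]
where each $H_i$ is a hyperbolic plane and $V_a$ is anisotropic (there is no radical summand because $h$ is regular). The Witt index equals $m$.

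Next I would extract, for each $i$, an explicit isotropic vector $w_i \in H_i$. Inspecting the proof of Theorem \ref{Wittalg}, the hyperbolic planes are in fact already produced together with a generating isotropic vector (each $H_i$ was obtained by pairing an isotropic vector found by Algorithm 1 or 2 with an appropriate basis vector of the enclosing subspace). If this data is recorded during the execution, no additional work is needed. Otherwise, given $H_i$ via an orthogonal basis with Gram matrix $\mathrm{diag}(a,b)$, regularity and isotropy of $H_i$ imply that $-b/a$ is a square in $\F_q(t)^*$; a square root can be computed in randomized polynomial time by the square-root method recalled in Subsection~\ref{subsec:effective-2-3}, which yields an isotropic $w_i$.

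Let $U = \langle w_1,\dots,w_m\rangle$. For $i \neq j$ the planes $H_i$ and $H_j$ are orthogonal, so $h(w_i,w_j)=0$; and $h(w_i,w_i)=0$ by construction. Thus $h$ vanishes identically on $U \times U$, so $U$ is totally isotropic. The vectors $w_i$ are nonzero and lie in pairwise independent direct summands, hence they are linearly independent and $\dim U = m$. By the proposition following Theorem \ref{Witt}, every maximal totally isotropic subspace has dimension equal to the Witt index $m$, so $U$ is maximal.

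The only possible obstacle is the square-root computation used to produce an isotropic vector inside an already identified hyperbolic plane, but this is standard and randomized polynomial time; the total cost is dominated by Theorem \ref{Wittalg}, so the whole procedure is randomized polynomial time, as required.
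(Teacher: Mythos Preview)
Your proposal is correct and follows essentially the same approach as the paper: compute the Witt decomposition via Theorem~\ref{Wittalg}, take an isotropic vector from each hyperbolic plane, and observe that these span a maximal totally isotropic subspace. The paper's proof is terser (it simply cites \cite[Chapter I, Corollary 4.4.]{Lam} for maximality), but your added justification for linear independence, total isotropy, and how to actually extract the isotropic vectors is entirely in line with the argument.
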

\begin{proof}
We compute the Witt decomposition of $h$ using Theorem \ref{Wittalg}. Then we take an isotropic vector from each hyperbolic plane. They generate a maximal totally isotropic subspace \cite[Chapter I, Corollary 4.4.]{Lam}.
\end{proof}

Here we only considered regular bilinear forms. Now we deal with the case where $h$ is not regular.

\begin{corollary}\label{not regular}
Let $(V,h)$ be a quadratic space. There exists a randomized polynomial 
time algorithm which finds a Witt decomposition of $h$.
\end{corollary}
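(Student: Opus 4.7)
The plan is to reduce the general case to the regular one already handled by Theorem \ref{Wittalg}. First I would compute the radical $V_0 = V^\perp$, which by definition consists of the vectors $v \in V$ with $h(u,v) = 0$ for every $u \in V$. Given the Gram matrix $A$ of $h$ with respect to the input basis, $V_0$ is simply the kernel of $A$, so it can be obtained in deterministic polynomial time by linear algebra over $\mathbb{F}_q(t)$; the output is a basis of $V_0$.

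Next I would extend a basis $v_1,\dots,v_r$ of $V_0$ to a basis $v_1,\dots,v_r,w_1,\dots,w_{n-r}$ of $V$ (again by standard linear algebra) and let $W$ be the span of $w_1,\dots,w_{n-r}$. Then $V = V_0 \oplus W$, and because $V_0$ is the radical this is automatically an orthogonal decomposition, i.e.\ $V_0 \subseteq W^\perp$ and $W \subseteq V_0^\perp$. Moreover, $W$ endowed with the restriction of $h$ is regular: if $w \in W$ satisfied $h(w,w')=0$ for all $w' \in W$, then writing an arbitrary $v \in V$ as $v_0 + w'$ with $v_0 \in V_0$ and $w' \in W$ gives $h(w,v) = h(w,v_0)+h(w,w') = 0$, so $w \in V^\perp \cap W = V_0 \cap W = 0$. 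The Gram matrix of $h|_W$ in the basis $w_1,\dots,w_{n-r}$ is a submatrix computed directly from $A$ and the extension, so its entries have controlled size.

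Now I would invoke Theorem \ref{Wittalg} on the regular space $(W,h|_W)$ to produce, in randomized polynomial time, an orthogonal decomposition $W = W_h \oplus W_a$ into an orthogonal sum of hyperbolic planes $W_h$ and an anisotropic part $W_a$. Combining everything gives the Witt decomposition $V = V_0 \oplus W_h \oplus W_a$, with $V_0$ the totally isotropic radical as required by Theorem \ref{Witt}. All intermediate steps are either deterministic linear algebra or the randomized subroutine of Theorem \ref{Wittalg}, so the overall algorithm is randomized polynomial time. The only point requiring any care is verifying that the complement $W$ is regular and that the direct sum is orthogonal, both of which follow immediately from the defining property of the radical; there is no serious obstacle beyond reusing the regular case.
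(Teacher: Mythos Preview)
Your proposal is correct and follows essentially the same approach as the paper: compute the radical by solving a linear system, pass to a direct complement where the restricted form is regular, and apply Theorem~\ref{Wittalg} there. The paper's proof is a terse two-sentence version of exactly this argument, so you have simply filled in the details.
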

\begin{proof}
The radical of $V$ can be computed by solving a system of linear equations. Then $h$ restricted to
a direct complement of the radical is regular, thus Theorem \ref{Wittalg} applies.
\end{proof}

We conclude the section by proposing an algorithm for explicit equivalence of quadratic forms. For simplicity we restrict our attention to regular bilinear forms.
\begin{theorem}\label{equivalence}
Let $(V_1,h_1)$ and $(V_2,h_2)$ be regular quadratic forms over 
$\mathbb{F}_q(t)$. 
Then there exists a randomized polynomial time algorithm which 
decides whether they are isometric,
and, in case they are, computes an isometry between them.
\end{theorem}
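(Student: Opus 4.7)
The plan has two phases: decide isometry, then construct the isometry explicitly when it exists.

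For the decision, I invoke the proposition of Subsection \ref{subsec:qf-gen} which says that two regular quadratic spaces of equal dimension are isometric if and only if $(V_1,h_1)\oplus(V_2,-h_2)$ decomposes as an orthogonal sum of hyperbolic planes. Concretely, form the $2n$-dimensional space whose Gram matrix is the block diagonal $\mathrm{diag}(A_1,-A_2)$ and apply Theorem \ref{Wittalg}; the forms are isometric exactly when the anisotropic part returned is zero. If not, output ``not isometric''.

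For the construction, I recurse on $n=\dim V_1$. In the base case $n=1$, with values $a$ and $b$, compute a square root $c$ of $a/b$ by the randomized method of Subsection \ref{subsec:effective-2-3} and map the first basis vector to $c$ times the second. For $n\geq 2$, pick an anisotropic $v\in V_1$ by a short search as in the proof of Lemma \ref{GS}, set $\alpha=h_1(v,v)$, and seek $w\in V_2$ with $h_2(w,w)=\alpha$. This reduces to finding a non-trivial zero of the $(n+1)$-variable form $h_2(x_1,\dots,x_n)-\alpha y^2$, which must be isotropic because $V_2\cong V_1$ represents $\alpha$. I would use Fact \ref{size} (Cremona--van Hoeij) when $n=2$, Algorithm \ref{alg1} when $n=3$, and Algorithm 2 when $n\geq 4$. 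Generically the returned zero has $y\neq 0$ and I set $w=x/y$; if $y=0$ then $x$ is an isotropic vector in $V_2$, which I extend to a hyperbolic plane (by finding $u\in V_2$ with $h_2(x,u)\neq 0$) and then represent $\alpha$ inside that plane by an elementary $2\times 2$ calculation. Putting $\phi(v)=w$, I recurse on $\langle v\rangle^{\perp}\leq V_1$ and $\langle w\rangle^{\perp}\leq V_2$, which are regular, isometric by Witt cancellation (Theorem \ref{Witt}), and of dimension $n-1$; combining the partial maps yields the global $\phi$ and hence the transition matrix $B$.

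The main obstacle will be controlling polynomial degrees across the $n$ recursive levels. Each level invokes a zero-finder whose output has size $O(d)$ in the current determinant degree (Theorem \ref{main}, Fact \ref{size}), followed by an orthogonal projection (via Corollary \ref{not diagonal}) and, if needed, a re-diagonalization (Lemma \ref{GS}); each such operation can multiply degree bounds by at most a constant or linear factor. An amortized analysis, keeping diagonal Gram matrices of bounded degree throughout the recursion rather than letting denominators accumulate, should show that the cumulative growth is polynomial in the original input size and $\log q$, so that the whole procedure runs in randomized polynomial time.
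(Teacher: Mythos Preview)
Your decision step matches the paper exactly. Your construction step, however, takes a different route and has a genuine complexity gap.

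The paper does not recurse dimension by dimension. Instead it computes the Witt decomposition of each $(V_i,h_i)$, matches the hyperbolic parts directly (by bringing every hyperbolic plane to the standard form $x_1^2-x_2^2$), and for the anisotropic parts uses a one-shot trick: it finds a hyperbolic basis $b_1,\ldots,b_{2m}$ of $(V_1^a\oplus V_2^a,\,h_1\oplus -h_2)$ in which the Gram matrix has $2\times 2$ blocks $\begin{pmatrix}0&1/2\\1/2&0\end{pmatrix}$, projects the odd-indexed $b_i$ onto $V_1^a$ and $V_2^a$ to obtain bases $u_i$ and $v_i$, and checks that $u_i\mapsto v_i$ is an isometry. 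Crucially, the anisotropic part over $\mathbb{F}_q(t)$ has dimension at most $4$, so this step lives in bounded dimension.

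Your recursion, by contrast, runs for $n$ levels. At each level you solve a quadratic form, obtaining a vector $w$ whose degree is $O(d)$ in the current determinant degree (Theorem~\ref{main}); you then pass to $\langle w\rangle^\perp$ and re-diagonalize. Each such step multiplies the degree bound by a constant $C>1$ (the hidden constant in Theorem~\ref{main} together with the cost of the projection), so after $n$ steps the degrees are of order $C^{\,n}$ times the input degree --- exponential in $n$. This is exactly the phenomenon the paper's proof of Theorem~\ref{Wittalg} works to avoid: there the grouping-by-$5$ scheme forces only $O(\log n)$ iterations, so the accumulated factor is $C^{O(\log n)}=n^{O(1)}$. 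Your sentence ``an amortized analysis \ldots\ should show that the cumulative growth is polynomial'' is precisely where the argument fails; no amortization over the $w_k$'s keeps the complement Gram matrices bounded, because each new $w_k$ is computed from the previous (already inflated) Gram data.

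A clean fix that stays close to your idea: first invoke Theorem~\ref{Wittalg} on each $V_i$ separately to split off the hyperbolic parts, match those via the standard form, and then run your Witt-cancellation recursion only on the anisotropic remainders. Since those have dimension at most $4$, the recursion depth is bounded by a constant and the degree blow-up is harmless.
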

\begin{proof}
The quadratic spaces $(V_1,h_1)$ and $(V_2,h_2)$ are equivalent 
if and only if the orthogonal sum of $(V_1,h_1)$ and $(V_2,-h_2)$ 
can be decomposed into the orthogonal sum of hyperbolic 
planes (\cite[Chapter I, Section 4]{Lam}). 
Hence the question of deciding isometry can be solved using Theorem \ref{Wittalg}. 
We turn our attention to the second part of the theorem, to computing an isometry.	

First we consider the case of quadratic spaces whose Witt decomposition 
consist only of the orthogonal sum of hyperbolic planes
(i.e., hyperbolic spaces). As shown in Subsection~\ref{subsec:qf-gen},
we can transform each of the corresponding binary forms into the standard
diagonal form, $x_1^2-x_2^2$. This results in new bases for the two spaces
in which $h_1$ and $h_2$ have block diagonal matrices with $2\times 2$
diagonal blocks
$$\begin{pmatrix}
		1 & 0 \\
		0 & -1
\end{pmatrix}.$$
The linear extension of an approriate bijection 
between these bases is an isometry. We can 
efficiently compute the matrix of this map
in terms of the original bases.

Let us assume now that $(V_1,h_1)$
and $(V_2,h_2)$ are isometric anisotropic quadratic spaces. 
Isometry implies that $(V_1\oplus V_2,h_1\oplus -h_2)$ 
is the orthogonal sum of hyperbolic planes. We find
a basis of $V_1\oplus V_2$ in which the Gram matrix of 
$h_1\oplus -h_2$ is of a block diagonal form like above.
Then the substitution described in Subsection~\ref{subsec:qf-gen}
for equivalence of the two standard binary hyperbolic forms
$x_1^2-x_2^2$ and $x_1x_2$ can be used to construct a new basis
$b_1,b_2,\dots , b_{2n}$ in which
the Gram matrix becomes block diagonal with blocks 
$$\begin{pmatrix}
	0  & \frac{1}{2} \\
	\frac{1}{2}  & 0
\end{pmatrix}.$$
(Here $n$ 
is the common dimension of $V_1$ and $V_2$.)
Every $b_i$ can be uniquely written 
in the form $b_i=u_i+v_i$ where $u_i\in V_1$ and $v_i\in V_2$. 
These can be found by orthogonal projection.
We claim that the vectors $u_1,u_3\ldots, u_{2n-1}$ are linerly
independent. 
To see this, assume that
$$\lambda_1u_1+\lambda_3 u_3+\ldots+ \lambda_{2n-1}u_{2n-1}=0$$
for some $\lambda_1,\ldots,\lambda_{2n-1}$ not all zero.
Then the vector 
$b=\lambda_1b_1+\lambda_3 b_3+\ldots+ \lambda_{2n-1}b_{2n-1}$
is non-zero as the $b_i$ are linearly independent. The orthogonal
projection of $b$ to $V_1$ is zero, whence $b$ is a non-zero vector
from $V_2$. The vector $b$, as a member of
the totally isotropic subspace spanned
by $b_1,b_3,\ldots,b_{2n-1}$,
must be isotropic. This however contradicts to the anisotropy of $(V_2,-h_2)$.
Therefore $u_1,u_3,\ldots,u_{2n-1}$ is a basis of $V_1$. By symmetry,
$v_1,v_3,\ldots,v_{2n-1}$ is a basis of $V_2$.
 Now we prove that the Gram matrix of the quadratic form $h_1$ 
in the basis $u_1,u_3\dots, u_{2n-1}$ is the same as the 
Gram matrix of $h_2$ in the basis $v_1,v_3\dots,v_{2n-1}$. 
Observe that since the Gram matrix of 
$h_1\oplus -h_2$ had zeros in the diagonal 
$h_1(u_i,u_i)=h_2(v_i,v_i)$. 
Since we chose only the odd indices 
(i.e there are no two indices which differ by 1) 
we also have that $h_1(u_i,u_j)=h_2(v_i,v_j)$. 
Thus the linar extension of the map $u_i\rightarrow v_i$
($i=1,3,\ldots,2n-1$) is an isometry between $V_1$ and $V_2$.
One only has to compute the matrix of this map in terms
of the original bases for $V_1$ and $V_2$. 

In order to find isometries of  possibly isotropic 
quadratic spaces we first compute their Witt decomposition. 
Then by \cite[Chapter I, Section 4]{Lam} we know that they 
are isometric if and only if their 
hyperbolic and anisotropic parts are isometric respectively. 
An isometry can be found by taking the direct sum of a pair of
isometries between the respective parts. Again, one can finish with
computing the matrix of this direct sum map in terms
of the original bases for $V_1$ and $V_2$. 
\end{proof}

\begin{remark}
Theorem \ref{equivalence} can be extended to 
degenerate quadratic spaces by using Corollary \ref{not regular}.
Also, the proof actually shows existence of a reduction from computing 
isometries to three instances of computing Witt decompositions of quadratic 
spaces over an arbitrary field of characteristic different from 2.
\end{remark}

\section{An application}

Besides equivalence of quadratic forms,
the explicit isomorphism problem with full $2\times 2$
matrix algebras for global function fields provides further
motivation for solving homogeneous quadratic equations in 4 and 5 
variables.
We now describe the explicit isomorphism problem in more detail. 
Let $\K$ be a field, $\A$ an associative algebra over $\K$. Suppose that $\A$ is isomorphic to the full matrix algebra $M_n(\K)$. The task is to construct explicitly an isomorphism $\A\rightarrow M_n(K)$. Or, equivalently, give an irreducible $\A$-module.

Recall, that for an algebra $\A$ over a field $\K$ and for a $\K$-basis   $a_1, \ldots ,a_m$ of $\A$ over $\K$, the products $a_ia_j$ can be expressed as linear combinations of the $a_i$:
$$ a_ia_j=\gamma _{ij1}a_1+\gamma _{ij2}a_2+\cdots +\gamma _{ijm}a_m. $$

The elements $\gamma _{ijk}\in \K$ are called structure constants. We consider $\A$ to be given by a collection of
structure constants.

The case when $\K=\mathbb{F}_q(t)$ is considered in \cite{IKR}, where a randomized polynomial time algorithm is proposed for computing an explicit isomorphism. However, when $\K$ is a finite extension of $\mathbb{F}_q(t)$, the same problem remained open. The only known algorithms for this task run in time exponential in the degree of the extension and the degree of the discriminant of the extension. The first interesting case is when $n=2$ and $\K$ is a quadratic extension of $\mathbb{F}_q(t)$. Here we solve this problem using Algorithms 1 and 2. The method is a straightforward analogue of the algorithm from \cite{K}.

Let $\K$ be a quadratic extension of $\mathbb{F}_q(t)$. Let $\A$, an algebra isomorphic to $M_2(\K)$, be given by structure constants. First we find a subalgebra in $\A$ which is quaternion algebra over $\mathbb{F}_q(t)$. This is done in two steps. We begin with finding an element $u$ in $\A$ such that $u^2\in\mathbb{F}_q(t)$ and $u$ is not in the center of $\A$. Then we find an element $v$ such that $uv+vu=0$ and $v^2\in\mathbb{F}_q(t)$. Finally, the $\mathbb{F}_q(t)$-vector space generated by $1,u,v,uv$ yields the desired subalgebra. In the first step of this algorithm we make use of Algorithm 2. In the second part we make use of Algorithm 1.

Recall, that we denoted by $H_\F(\alpha,\beta)$ the quaternion algebra over the field $\F$ (if $\chara (\F)\neq 2$) with parameters $\alpha,\beta\in\F^*$. 

Let $K=\mathbb{F}_q(t)(\sqrt{d})$, where $d$ is a square-free polynomial in $\mathbb{F}_q[t]$.

\begin{proposition}\label{square}
Let $\A\cong M_2(\K)$ be given by structure constants. Then there exists a randomized polynomial time algorithm which finds a non-central element $l$, such that $l^2\in\mathbb{F}_q(t)$.
\end{proposition}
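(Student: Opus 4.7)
The plan is to reduce the problem to finding an isotropic vector of an $\mathbb{F}_q(t)$-valued quadratic form in six variables, a task that Algorithm 2 (extended as in the remark after Theorem \ref{main}) handles in randomized polynomial time.

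First, I compute the reduced trace $\mathrm{trd}:\A\to\K$ from the structure constants. Since $\A\cong M_2(\K)$ is a central simple $\K$-algebra of degree $2$, the trace of left multiplication by $l$ on $\A$, viewed as a $\K$-linear endomorphism, equals $2\,\mathrm{trd}(l)$; dividing by $2$ yields $\mathrm{trd}$. Let $V=\ker(\mathrm{trd})$: this is a $3$-dimensional $\K$-subspace of $\A$, hence a $6$-dimensional $\mathbb{F}_q(t)$-subspace, and an $\mathbb{F}_q(t)$-basis $v_1,\dots,v_6$ of $V$ is obtained by linear algebra. For $l\in V$, Cayley--Hamilton gives $l^2=-\mathrm{nrd}(l)\cdot 1\in\K\cdot 1$; writing $\mathrm{nrd}(l)=\alpha(l)+\beta(l)\sqrt{d}$ with $\alpha(l),\beta(l)\in\mathbb{F}_q(t)$, the functions $\alpha,\beta:V\to\mathbb{F}_q(t)$ are $\mathbb{F}_q(t)$-valued quadratic forms on the $6$-dimensional space $V$.

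The target is a non-zero $l\in V$ with $\beta(l)=0$: then $l^2=-\alpha(l)\in\mathbb{F}_q(t)$, and $l$ is automatically non-central since it is non-zero and traceless (using $\mathrm{char}\,\K\neq 2$). Such $l$ exists: any nilpotent matrix in $M_2(\K)$ transfers to a non-zero traceless element of $\A$ with $l^2=0$, and more abstractly $\beta$ is a quadratic form in $6\geq 5$ variables over $\mathbb{F}_q(t)$, hence isotropic by Fact \ref{fact2} combined with Theorem \ref{HM}. To set up the search algorithmically, I assemble the Gram matrix of $\beta$ by computing $v_i^2\in\A$ from the structure constants and reading off the $\sqrt{d}$-coefficient of its scalar component, and by polarizing off-diagonal entries via $v_iv_j+v_jv_i=(v_i+v_j)^2-v_i^2-v_j^2$; these are deterministic polynomial-time operations, and the analogous procedure yields the Gram matrix of $\alpha$ as well.

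Finally, I find a non-trivial zero of $\beta$. Apply Lemma \ref{GS} to diagonalize $\beta$; if its radical is non-zero, return any non-zero vector from the radical, which is automatically isotropic. Otherwise $\beta$ is a regular $6$-variable form, so I set one coordinate to zero to obtain a $5$-variable form and invoke Algorithm 2 (or, if this restriction happens to be degenerate, extract a radical vector instead). Pulling the resulting isotropic vector back to $V\subseteq\A$ produces the desired non-central $l$ with $l^2\in\mathbb{F}_q(t)$. The only subtle point is handling possible degeneracy of $\beta$ or of its $5$-variable restriction, but in every such case linear algebra inside the radical furnishes an isotropic vector directly; the rest of the time Algorithm 2 finishes the job. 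The whole procedure runs in randomized polynomial time, the randomization coming entirely from the invocation of Algorithm 2.
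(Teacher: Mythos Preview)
Your proof is correct and follows essentially the same approach as the paper: both restrict to the trace-zero part of $\A$ (the paper via an explicitly constructed quaternion basis $w,w',ww'$ over $\K$, you via $\ker(\mathrm{trd})$), write the $\sqrt d$-coefficient of $l^2$ as an $\mathbb{F}_q(t)$-valued quadratic form in six variables, and invoke Algorithm~2 to find an isotropic vector. Your handling of possible degeneracy of this form is slightly more careful than the paper's, but otherwise the arguments coincide.
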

\begin{proof}
 First we construct a quaternion basis $1,w,w',ww'$ of $\A$ in the following way. We find a non-central element $w$ such that $w^2\in\K$ by completing the square and then find an element $w'$ such that $ww'+w'w=0$. This can be found by solving a system of linear equations. Such a $w'$ exists by the following reasoning. The map $\sigma: s\mapsto ws+sw$ is $\K$-linear and has a non-trivial kernel since its image is contained in the centralizer of $w$ (which is not $\A$ since $w$ was non-central). Then $1,w,w',ww'$ will be a quaternion basis. Details can be found in \cite{Ronyai}.

 We have the following:
\begin{equation*}
w^2=r_1+t_1\sqrt{d}, ~w'^2=r_2+t_2\sqrt{d}.
\end{equation*}

Here $r_1,r_2,t_1,t_2\in\F_q(t)$. In order to ensure that the square of $l$ is in $\K$ it has to be in the $\K$-subspace generated by $w$, $w'$ and $ww'$ (\cite[Section 1.1.]{Vigneras}). In other words the element $l$ is of the form $l=(s_1+s_2\sqrt{d})w+(s_3+s_4\sqrt{d})w'+(s_5+s_6\sqrt{d})ww'$, where $s_1,\dots,s_6\in\mathbb{F}_q(t)$. The condition $l^2\in\mathbb{F}_q(t)$ is equivalent to the following: 
\begin{equation*}
((s_1+s_2\sqrt{d})w+(s_3+s_4\sqrt{d})w'+(s_5+s_6\sqrt{d})ww')^2\in \mathbb{F}_q(t).
\end{equation*}

If we expand this we obtain:
\begin{eqnarray*}
((s_1+s_2\sqrt{d})w+(s_3+s_4\sqrt{d})w'+(s_5+s_6\sqrt{d})ww')^2=\\
(s_1^2+ds_2^2+2s_1s_2\sqrt{d})(r_1+t_1\sqrt{d})+(s_3^2+ds_4^2+2s_3s_4\sqrt{d})(r_2+t_2\sqrt{d})-\\
(s_5^2+ds_6^2+2s_5s_6\sqrt{d})(r_1+t_1\sqrt{d})(r_2+t_2\sqrt{d}).
\end{eqnarray*}

In order for $l$ to be in $\mathbb{F}_q(t)$ the coefficient of $\sqrt{d}$ has to be zero:
\begin{eqnarray*}
t_1s_1^2+t_1ds_2^2+2r_1s_1s_2+t_2s_3^2+t_2ds_4^2+2r_2s_3s_4-(r_1t_2+t_1r_2)s_5^2-\\
(r_1t_2+t_1r_2)ds_6^2-2(r_1r_2+t_1t_2d)s_5s_6=0.
\end{eqnarray*}

The previous equation can be solved by Algorithm 2. Note that a quadratic form in 6 variables over $\mathbb{F}_q(t)$ is always isotropic.
\end{proof}

Now we turn to the second step:

\begin{proposition}\label{anticommute}
Let $B=H_{\K}(a,b+c\sqrt{d})$ given by: $u^2=a, v^2=b+c\sqrt{d}$, where $a,b,c\in \mathbb{F}_q(t), c\neq 0$. Then one can find a $v'$ (if it exists) in randomized polynomial time such that $uv'+v'u=0$ and $v'^2\in\mathbb{F}_q(t)$.
\end{proposition}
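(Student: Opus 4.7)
My plan is to reduce the construction of $v'$ to finding a non-trivial zero of an explicit quaternary quadratic form over $\F_q(t)$, and then invoke Algorithm \ref{alg1}. First I would observe that any element $w \in B$ anticommuting with $u$ must lie in the two-dimensional $\K$-subspace spanned by $v$ and $uv$: the center $\K$ commutes with $u$, while one checks $u(uv) = av = -(uv)u$, so on the basis $1, u, v, uv$ the map $w \mapsto uw+wu$ vanishes exactly on the $\K$-span of $v$ and $uv$. Thus any candidate has the shape $v' = (s_3+s_4\sqrt d)\,v + (s_5+s_6\sqrt d)\,uv$ with four unknowns $s_3,\ldots,s_6 \in \F_q(t)$, and $v' \neq 0$ iff these scalars are not all zero.

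Next I would compute $v'^2$ explicitly. Set $\alpha = s_3+s_4\sqrt d$ and $\beta = s_5+s_6\sqrt d$. Using the centrality of $\K$, the relations $v^2 = b+c\sqrt d$, $(uv)^2 = -a(b+c\sqrt d)$, and the cancellation $v(uv)+(uv)v = -uv^2+uv^2 = 0$, a short calculation gives $v'^2 = (\alpha^2 - a\beta^2)(b+c\sqrt d)$. Writing $\alpha^2 - a\beta^2 = P + Q\sqrt d$ with $P = s_3^2 + d s_4^2 - a s_5^2 - ad s_6^2$ and $Q = 2(s_3 s_4 - a s_5 s_6)$, the requirement $v'^2 \in \F_q(t)$ amounts to the vanishing of the $\sqrt d$-coefficient of $(P+Q\sqrt d)(b+c\sqrt d)$, namely
\begin{equation*}
c s_3^2 + cd s_4^2 - ac s_5^2 - acd s_6^2 + 2b s_3 s_4 - 2ab s_5 s_6 = 0.
\end{equation*}
The Gram matrix of this form is block-diagonal with $2\times 2$ blocks of determinant $c^2 d - b^2$ and $a^2(c^2 d - b^2)$; since $c \neq 0$, the norm $b^2 - c^2 d = N_{\K/\F_q(t)}(b+c\sqrt d)$ is non-zero, so the quaternary form is regular.

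Finally I would feed this quaternary form to Algorithm \ref{alg1}, relying on Corollary \ref{not diagonal} to accommodate the non-diagonal Gram matrix. By Theorem \ref{main} this runs in randomized polynomial time and either returns a non-trivial zero $(s_3, s_4, s_5, s_6)$ — which assembles into a suitable $v' \neq 0$ — or certifies anisotropy, in which case no $v'$ with the required properties exists. The main obstacle is really just the algebraic bookkeeping of the previous paragraph; once the displayed equation has been written down, the heavy lifting is handed off to Algorithm \ref{alg1}.
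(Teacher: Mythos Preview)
Your proposal is correct and follows essentially the same route as the paper: parametrize the anticommutant of $u$ as a $\K$-combination of $v$ and $uv$, expand the square, set the $\sqrt d$-coefficient to zero, and hand the resulting quaternary form over $\F_q(t)$ to Algorithm~\ref{alg1}. Your version is in fact slightly more careful than the paper's, since you verify regularity of the form and explicitly invoke Corollary~\ref{not diagonal} to handle the non-diagonal Gram matrix before applying Algorithm~\ref{alg1}.
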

\begin{proof}
Since $v'$ anticommutes with $u$ (i.e. $uv'+v'u=0$) it must be a $\K$-linear combination of $v$ and $uv$. Indeed, the map $\sigma: B\rightarrow B$ defined by $s\mapsto us+su$ is linear whose image has dimension at least 2 over $K$ ($2u$ and $2a$ are in the image). Therefore its kernel has dimension at most 2 and actually exactly 2 since $v$ and $uv$ are in the kernel. 
 
This means that we have to search for $s_1,s_2,s_3,s_4\in\mathbb{F}_q(t)$ such that:
\begin{equation*}
((s_1+s_2\sqrt{d})v+(s_3+s_4\sqrt{d})uv)^2\in\mathbb{F}_q(t).
\end{equation*}

Expanding this expression we obtain the following:
\begin{eqnarray*}
((s_1+s_2\sqrt{d})v+(s_3+s_4\sqrt{d})uv)^2=\\
(s_1^2+s_2^2d+2s_1s_2\sqrt{d})(b+c\sqrt{d})-(s_3^2+s_4^2d+2s_3s_4\sqrt{d})a(b+c\sqrt{d}).
\end{eqnarray*}

In order for this to be in $\mathbb{F}_q(t)$, the coefficient of $\sqrt{d}$ has to be zero. So we obtain the following equation:

\begin{equation}
c(s_1^2+s_2^2d)+2bs_1s_2-ac(s_3^2+s_4^2d)-2abs_3s_4=0.
\end{equation}

Thus we have proven that finding a $v'$ satisfying the conditions of the proposition is equivalent to solving equation (8). We either detect that equation (8) is not solvable or return a solution using Algorithm 1.
\end{proof}

\begin{remark}
Actually a little bit of calculation shows that one only needs the algorithm from \cite{Cremona1} to solve equation (8) (\cite{K}). 
\end{remark}

Finally we state these results in one proposition:
\begin{proposition}\label{subalgebra}
Let $\A\cong M_2(\K)$ be given by structure constants. Then one can find either a four dimensional subalgebra over $\mathbb{F}_q(t)$ which is a quaternion 
algebra, or a zero divisor, by a randomized algorithm which runs in polynomial time.
\end{proposition}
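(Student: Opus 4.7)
The plan is to construct a quaternion $\F_q(t)$-basis $\{1, u, v', uv'\}$ of a subalgebra of $\A$ by combining Propositions~\ref{square} and~\ref{anticommute}, returning a zero divisor whenever one appears naturally along the way.

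First I would invoke Proposition~\ref{square} to obtain a non-central $u \in \A$ with $a := u^2 \in \F_q(t)$. If $a = 0$ then $u$ is a non-zero nilpotent, hence a zero divisor, and we return it. Otherwise $a \in \F_q(t)^*$, and I would solve the linear system $w \mapsto uw + wu$ on $\A$ to find a non-zero $v$ with $uv + vu = 0$; such a $v$ exists because the centralizer of the non-central $u$ is a proper $\K$-subspace of $\A$ (as in the proof of Proposition~\ref{square}). The tuple $\{1, u, v, uv\}$ is then a $\K$-basis of $\A$, and writing $v^2 = b + c\sqrt{d}$ with $b, c \in \F_q(t)$, we distinguish: if $c = 0$ and $b \neq 0$, the $\F_q(t)$-span $\langle 1, u, v, uv \rangle$ is already a quaternion subalgebra isomorphic to $H_{\F_q(t)}(a, b)$; if $c = 0 = b$, then $v^2 = 0$ and $v$ is a zero divisor.

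If $c \neq 0$, I would apply Proposition~\ref{anticommute} to obtain $v'$ with $uv' + v'u = 0$ and $b' := (v')^2 \in \F_q(t)$. If $b' = 0$, return $v'$ as a zero divisor. Otherwise $b' \in \F_q(t)^*$, and $\{1, u, v', uv'\}$ satisfies the defining relations of the quaternion algebra $H_{\F_q(t)}(a, b')$; linear independence over $\F_q(t)$ follows because $u \notin \F_q(t)$ (since $u$ is non-central), $v' \notin \F_q(t) \oplus \F_q(t)u$ (as every element of this subspace commutes with $u$ while $v'$ anticommutes with it), and $uv'$ lies outside the span of the previous three by the same anticommutation argument.

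The main obstacle is guaranteeing that equation~(8) in Proposition~\ref{anticommute} is solvable over $\F_q(t)$ when $c \neq 0$; this relies on the splitness of $\A \cong M_2(\K)$, which forces a non-central $u$ with $u^2 \in \F_q(t)^*$ to embed in a split quaternion $\F_q(t)$-subalgebra, yielding the required non-trivial solution to~(8). All steps run in randomized polynomial time, inheriting the complexity of Propositions~\ref{square} and~\ref{anticommute} (which rest on Algorithms~1 and~2) together with elementary linear algebra over $\F_q(t)$ to manipulate bases, centralizers, and the structure constants of $\A$.
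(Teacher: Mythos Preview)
Your approach is essentially the same as the paper's: use Proposition~\ref{square} to get $u$ with $u^2\in\F_q(t)$, then Proposition~\ref{anticommute} to get an anticommuting $v'$ with $(v')^2\in\F_q(t)$, outputting a zero divisor whenever a square or a zero value appears. Two small points are worth noting.

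First, the paper short-circuits not only when $u^2=0$ but whenever $u^2=r^2$ for some $r\in\F_q(t)$, returning the zero divisor $u-r$; your version only catches $r=0$. This is not a logical gap (your construction still produces a quaternion subalgebra, just a split one), but you miss an easy exit.

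Second, and more substantively, the sentence ``splitness of $\A$ \dots\ forces a non-central $u$ with $u^2\in\F_q(t)^*$ to embed in a split quaternion $\F_q(t)$-subalgebra'' is exactly the non-trivial content of the proposition, and you assert it rather than prove it. The paper supplies the missing argument: since $\A\cong M_2(\K)$, it contains a subalgebra $\A_0\cong M_2(\F_q(t))$; inside $\A_0$ one can write down an element $l_0$ with minimal polynomial $X^2-a$ over $\K$ (e.g.\ the companion matrix), and by \cite[Theorem~1.2.1]{Vigneras} there is $m\in\A$ with $u=m^{-1}l_0m$. Any $l_0'\in\A_0$ anticommuting with $l_0$ then yields $v'=m^{-1}l_0'm$ with $uv'+v'u=0$ and $(v')^2=(l_0')^2\in\F_q(t)$. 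Without this conjugation step your existence claim for a solution of equation~(8) is unjustified.
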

\begin{proof}

First we find a non-central element $l$ such that $l^2\in\mathbb{F}_q(t)$. If $l^2=r^2$, where $r\in\F_q(t)$ then we return the zero divisor $l-r$ which is non-zero since $l$ is non-central. Otherwise, when $l^2$ is not a square in $\F_q(t)$, one finds an element $l'$ such that $ll'+l'l=0$ and $l'^2\in\mathbb{F}_q(t)$. These can be done using Proposition \ref{square} and \ref{anticommute}. If $l'^2=0$ we again have a zero divisor. If not, then the $\mathbb{F}_q(t)$-space generated by $1,l,l',ll'$ is a quaternion algebra over $\mathbb{F}_q(t)$. The only thing we need to show is that for any $l$ such an $l'$ exists.

There exists a subalgebra $\A_0$ in $\A$ which is isomorphic to $M_2(\mathbb{F}_q(t))$. In this subalgebra there is an element $l_0$ for which $l$ and $l_0$ have the same minimal polynomial over $K$. This means that there exists an $m\in \A$ such that $l=m^{-1}l_0m$ (\cite[Theorem 1.2.1.]{Vigneras}). There exists a non-zero $l_0'\in\A_0$ such that $l_0l_0'+l_0'l_0=0$ (the existence of such an $l_0'$ was already proven at the beginning of the proof of Proposition \ref{square}). Let $l'=m^{-1}l_0'm$. We have that $l'^2=m^{-1}l_0'mm^{-1}l_0m=m^{-1}l_0^2m=l_0^2$, hence $l'^2\in\mathbb{F}_q(t)$. Since conjugation by $m$ is an automorphism we have that $ll'+l'l=m^{-1}(l_0l_0'+l_0'l_0)m=m^{-1}0m=0$. Thus we have proven the existence of a suitable element $l'$.
\end{proof}

Now we show how to apply this result to find a zero divisor in $\A$:
\begin{proposition}\label{application}
Let $\A\cong M_2(\K)$ be given by structure constants. Then there exists a randomized polynomial time algorithm which finds a zero divisor in $\A$.
\end{proposition}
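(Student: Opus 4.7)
The plan is to reduce the problem to a single quaternary quadratic equation over $\F_q(t)$, using the quaternion subalgebra produced by Proposition \ref{subalgebra} as a bridge between $\A$ over $\K$ and algorithms over $\F_q(t)$. First I would invoke Proposition \ref{subalgebra} on $\A$. If this returns a zero divisor, I stop. Otherwise it returns a quaternion $\F_q(t)$-subalgebra $B = H_{\F_q(t)}(a,b) \subseteq \A$ together with a basis $1,u,v,uv$ satisfying $u^2=a$, $v^2=b$, $uv=-vu$.

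The key algebraic observation is that the natural $\K$-algebra map $B \otimes_{\F_q(t)} \K \to \A$, $x\otimes k \mapsto x\cdot k$, is an isomorphism by a dimension count (both sides are $4$-dimensional over $\K$ and the map sends $1\otimes 1$ to $1$). Hence $B$ splits over $\K$, which by the classical theory of quaternion algebras is equivalent to the existence of an $\F_q(t)$-embedding $\K = \F_q(t)(\sqrt d) \hookrightarrow B$; concretely, of an element $\delta \in B$ with $\delta^2 = d$. A short computation shows that any such $\delta$ must be a pure quaternion $\delta = c_1 u + c_2 v + c_3 uv$, in which case $\delta^2 = a c_1^2 + b c_2^2 - ab c_3^2$.

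My next step would be to apply Algorithm 1 to the quaternary form
\[
Q(x_1,x_2,x_3,x_4) := a x_1^2 + b x_2^2 - ab x_3^2 - d x_4^2
\]
over $\F_q(t)$. Since $B$ splits over $\K$, $Q$ is isotropic, so Algorithm 1 returns a nontrivial zero $(c_1,c_2,c_3,c_4)$ in randomized polynomial time. If $c_4 \neq 0$, I set $\delta := (c_1u + c_2v + c_3uv)/c_4 \in B$, so $\delta^2 = d$, and return $w := \delta - \sqrt d \in \A$. Then $w(\delta + \sqrt d) = \delta^2 - d = 0$, and $w \neq 0$ because $B \cap \K = \F_q(t)$ inside $\A$ while $d$ is a non-square in $\F_q(t)$, so $\delta \notin \K$. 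If instead $c_4 = 0$, then $(c_1,c_2,c_3)$ is a nontrivial zero of the ternary form $ax_1^2 + bx_2^2 - abx_3^2$, so the pure quaternion $c_1u + c_2v + c_3uv \in B \subseteq \A$ is a nonzero nilpotent, hence already a zero divisor.

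The conceptual heart of the argument, and the only step demanding real thought, is the translation from the algebra-theoretic statement ``$\A \cong M_2(\K)$'' to the geometric statement ``$Q$ is isotropic over $\F_q(t)$''; once that bridge is in place, Algorithm 1 and a short case split finish the job. I do not expect any serious obstacle, since Proposition \ref{subalgebra} and Algorithm 1 both run in randomized polynomial time, and the remaining manipulations are linear-algebraic.
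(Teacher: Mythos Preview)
Your proposal is correct and follows essentially the same route as the paper: invoke Proposition~\ref{subalgebra}, then reduce to the quaternary form $ax_1^2+bx_2^2-abx_3^2-dx_4^2$ and apply Algorithm~1, with $\delta-\sqrt d$ as the zero divisor. The only cosmetic difference is ordering: the paper first tests whether the ternary part $ax_1^2+bx_2^2-abx_3^2$ is isotropic (i.e.\ whether $B$ is already split over $\F_q(t)$) and only afterwards passes to the quaternary form, whereas you go straight to the quaternary form and recover that case as the branch $c_4=0$. One small remark: your dimension-count for $B\otimes_{\F_q(t)}\K\to\A$ tacitly uses that $B\otimes_{\F_q(t)}\K$ is simple (so the nonzero unital map is injective); this is standard but worth a word.
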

\begin{proof}
We invoke the algorithm from Proposition \ref{subalgebra}. If it returns a zero divisor, then we are done. If not, then we have quaternion subalgebra $H$ over $\mathbb{F}_q(t)$. If $H$ is isomorphic to $M_2(\mathbb{F}_q(t))$, then one can find a zero divisor in it by using the algorithm form \cite{Cremona1} (or \cite{IKR}). If not, then there exists an element $s\in H$ such that $s^2=d$. Indeed, $H$ is split by $\K$ and therefore contains $\K$ as a subfield \cite[Theorem 1.2.8]{Vigneras}. Let $1,u,v,uv$ be a quaternion basis of $H$ with $u^2=a,v^2=b$. Every non-central element whose square is in $\F_q(t)$ is an $\mathbb{F}_q(t)$-linear combination of $u$, $v$ and $uv$. Hence finding an element $s$ such that $s^2=d$ is equivalent to solving the following equation:
\begin{equation}
ax_1^2+bx_2^2-abx_3^2=d.
\end{equation}

Since $H$ is a division algebra, the quadratic form $ax_1^2+bx_2^2-abx_3^2$ is anisotropic. Thus solving equation (9) is equivalent to finding an isotropic vector for the quadratic form $ax_1^2+bx_2^2-abx_3^2-dx_4^2$. One can find such a vector using Algorithm 1. We have found an element $s$ in $H$ such that $s^2=d$. Since $H$ is a central simple algebra over $\mathbb{F}_q(t)$ and $d$ is not a square in $\mathbb{F}_q(t)$, the element $s$ is not in the center of $\A$. Hence $s-\sqrt{d}$ is a zero divisor in $\A$.
\end{proof}
\begin{remark}
Let $\F$ be any field whose characteristic is different from 2 and let $\K$ be a quadratic extension of $\F$. The above described procedure reduces the question of finding a non-trivial zero of a ternary quadratic form over $\K$ to finding non-trivial zeros of quadratic forms of 4 or more variables over $\F$. 
\end{remark}
\smallskip

We also give another application of Algorithm 2 concerning quaternion algebras.
\begin{definition}
Let $\F$ be field such that $\chara~\F\neq 2$. We call two quaternion algebras $A_1=H_\F(a_1,b_1), A_2=H_\F(a_2,b_2)$ linked if there exist an element $\alpha\in \F$ such that $A_1=H_\F(\alpha,x)$ and $A_2=H_\F(\alpha,y)$.
\end{definition}

It is known (\cite[Chapter III, Theorem 4.8.]{Lam}) that over $\mathbb{F}_q(t)$ any two quaternion algebras are linked. We now propose an algorithm which finds such a presentation.
\begin{proposition}
Let $A_1=H_{\mathbb{F}_q(t)}(a_1,b_1), A_2=H_{\mathbb{F}_q(t)}(a_2,b_2)$, with $a_1,a_2,b_1,b_2\in\mathbb{F}_q(t)^*$. Then, there exists a randomized polynomial time algorithm which finds $\alpha\in\mathbb{F}_q(t)$ such that $A_1=H_{\mathbb{F}_q(t)}(\alpha,x)$ and $A_1=H_{\mathbb{F}_q(t)}(\alpha,y)$.
\end{proposition}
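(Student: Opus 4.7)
The plan is to reduce the linking problem to finding a common nonzero value represented by the two pure–quaternion ternary forms of $A_1$ and $A_2$, and then to recover the second generators by linear algebra. Recall that $A_1 = H_{\mathbb{F}_q(t)}(\alpha,x)$ for some $x\in\mathbb{F}_q(t)^*$ if and only if there exists a pure quaternion $s_1 = \xi_1 u_1 + \xi_2 v_1 + \xi_3 u_1v_1 \in A_1$ with $s_1^2 = \alpha$, that is, $\alpha$ is represented by the ternary form $a_1x_1^2 + b_1x_2^2 - a_1b_1x_3^2$. The analogous statement holds for $A_2$. Hence the task becomes: find $\alpha\in\mathbb{F}_q(t)^*$ represented by both pure–quaternion ternary forms.

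First I would set up the six–variable quadratic form
$$
R(x_1,x_2,x_3,y_1,y_2,y_3) = a_1x_1^2 + b_1x_2^2 - a_1b_1x_3^2 - \bigl(a_2y_1^2 + b_2y_2^2 - a_2b_2y_3^2\bigr).
$$
By Fact~\ref{fact2} together with Theorem~\ref{HM}, $R$ is isotropic over $\mathbb{F}_q(t)$, and the Remark after Theorem~\ref{main} shows that a non-trivial zero can be computed in randomized polynomial time by fixing one coordinate at zero and invoking Algorithm~2. Let $(\xi_1,\xi_2,\xi_3,\eta_1,\eta_2,\eta_3)$ be such a zero and set $\alpha := a_1\xi_1^2 + b_1\xi_2^2 - a_1b_1\xi_3^2 = a_2\eta_1^2 + b_2\eta_2^2 - a_2b_2\eta_3^2$. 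If $\alpha\neq 0$, we are done with the first half of the problem and the pure quaternions $s_1 = \xi_1 u_1 + \xi_2 v_1 + \xi_3 u_1v_1$ and $s_2 = \eta_1u_2 + \eta_2v_2 + \eta_3u_2v_2$ satisfy $s_1^2 = s_2^2 = \alpha$. The degenerate possibility $\alpha = 0$ means that the $(\xi_i)$ part is a non-trivial zero of the ternary form of $A_1$ (making $A_1$ split) or the $(\eta_i)$ part is a non-trivial zero of that of $A_2$ (making $A_2$ split); in this case I fall back to taking $\alpha = a_{3-j}$ where $A_j$ is the split one (so that the non-split algebra trivially represents $\alpha$ via $u_{3-j}$, and the split one represents every value, with a witness obtained by Algorithm~1 applied to the four-variable form $a_jx_1^2+b_jx_2^2-a_jb_jx_3^2-\alpha z^2$), or $\alpha=1$ if both algebras turn out to be split.

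Given $s_1,s_2$ and the scalar $\alpha$, I recover the second generators $x,y$ as follows. For each $i\in\{1,2\}$ the $\mathbb{F}_q(t)$-linear map $\sigma_i: A_i\to A_i$, $z\mapsto s_iz + zs_i$, lands inside the centralizer $\mathbb{F}_q(t)(s_i)$ (a short computation using $s_i^2\in\mathbb{F}_q(t)$ shows $\sigma_i(z)$ commutes with $s_i$), which is two–dimensional, so $\ker\sigma_i$ has dimension at least $2$. I pick a nonzero $t_i\in\ker\sigma_i$ by solving a $4\times 4$ linear system; the element $t_i^2$ then commutes with both $s_i$ and $t_i$ and hence with all of $A_i$, so $t_i^2\in\mathbb{F}_q(t)$. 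Setting $x:= t_1^2$ and $y:= t_2^2$ yields $A_1 = H_{\mathbb{F}_q(t)}(\alpha,x)$ and $A_2 = H_{\mathbb{F}_q(t)}(\alpha,y)$, together with the explicit quaternion bases $1,s_i,t_i,s_it_i$. The main obstacle is the possibility that $t_i^2 = 0$; this cannot occur when $A_i$ is a division algebra, and when $A_i$ is split one re-selects $t_i$ inside the two–dimensional anticommutant (or uses the resulting nilpotent $t_i$ as a zero divisor to compute the isomorphism $A_i\cong M_2(\mathbb{F}_q(t))$ directly and then produce a pair $(\alpha,x)$ by hand). All remaining steps—forming the linear systems, applying Cramer's rule, invoking Algorithms~1 and~2—run in randomized polynomial time by the results of Section~4.
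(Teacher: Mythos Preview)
Your approach is essentially the same as the paper's: set up the six-variable form $a_1x_1^2+b_1x_2^2-a_1b_1x_3^2-a_2y_1^2-b_2y_2^2+a_2b_2y_3^2$, find an isotropic vector via Algorithm~2, take $\alpha$ to be the common value of the two ternary halves, and then recover the second generators by solving for an element that anticommutes with the resulting pure quaternion. Your treatment of the degenerate case $\alpha=0$ is in fact more careful than the paper's: the paper asserts that both algebras are split when $\alpha=0$, whereas one of the two triples could be identically zero, in which case only the other algebra is immediately seen to be split---your fallback of taking $\alpha=a_{3-j}$ (trivially represented in $A_{3-j}$ and, since $A_j$ is split, universally represented there via Algorithm~1) handles this correctly.
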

\begin{proof}
Consider the quadratic form $a_1x_1^2+b_1x_2^2-a_1b_1x_3^2-a_2x_4^2-b_2x_5^2+a_2b_2x_6^2$. Find an isotropic vector for this quadratic form using Algorithm 2. Let the solution vector be $(y_1,\dots,y_6)$. Then let $\alpha=a_1y_1^2+b_1y_2^2-a_1b_1y_3^2=a_2y_4^2+b_2y_5^2-a_2b_2y_6^2$. If $\alpha=0$ then $A_1\cong A_2\cong M_2(\mathbb{F}_q(t))$, hence such a presentation can be found using the algorithm from \cite{Cremona1}. If $\alpha\neq 0$ then let $1,u_1,v_1,u_1v_1$ be the quaternion basis of $A_1$. Then the task of finding a suitable presentation reduces to finding an element which anticommutes with $y_1u_1+y_2v_1+y_3(u_1v_1)$. This can be done in polynomial time. The same goes for $A_2$.
\end{proof}
\begin{remark}
This problem can also be thought of as calculating a common splitting field of two quaternion algebras.
\end{remark}
\paragraph*{Acknowledgement.}
Research supported by the Hungarian National Research, Development and Innovation Office - NKFIH, Grant K115288. The authors are grateful to an anonymous referee for helpful remarks and suggestions. 


\begin{thebibliography}{MM}

\bibitem{Berlekamp}
E.R. Berlekamp:
Factoring polynomials over finite fields;
Bell System Technical Journal 46 (1967), 1853-1859.

\bibitem{CZ}
D.G. Cantor, H. Zassenhaus:
A new algorithm for  factoring polynomials
over finite fields;
Mathematics of Computation 36 (1981), 587-592.

\bibitem{Castel}
P. Castel: Un algorithme de r\'esolution des \'equations quadratiques en dimension 5 sans factorisation, Phd thesis, October 2011.

\bibitem{Cremona1}
J. Cremona, M. van Hoeij: Solving conics over function fields;
Journal de Th\'eorie des Nombres de Bordeaux 18(3) (2006), 595-606.
\bibitem{CR}
J.E. Cremona, D. Rusin: Efficient solution of rational conics; Mathematics of Computation 72 (2003), 1417-1441.

\bibitem{EH}

G. W. Effinger, D. R. Hayes:
Additive Number Theory of Polynomials over a Finite
Field, Oxford Science Publications, 1991. 
\bibitem{Gerstein}
L.J. Gerstein: Basic quadratic forms, Graduate Studies in
Mathematics, vol. 90, American Mathematical Society, Providence, RI, 2008.

\bibitem{IRS}
G. Ivanyos, L. R\'onyai, J. Schicho: Splitting full matrix algebras over algebraic number fields; Journal of Algebra 354 (2012), 211-223.

\bibitem{IKR}
G. Ivanyos, P. Kutas, L. R\'onyai: Computing explicit isomorphisms with full
matrix algebras over $\mathbb{F}_q(x)$; Foundations of Computational Mathematics 18(2) (2018), 381-397.
\bibitem{IS}
G. Ivanyos, \'A. Sz\'ant\'o:  Lattice basis reduction for indefinite forms and an application; Discrete Mathemathics 153 (1996), 177-188.
\bibitem{Kornblum}

Heinrich Kornblum: \"Uber  die  Primfunktionen  in  einer  arithmetischen  Progression; Mathematische Zeitschrift 5 (1919), 100-111.
\bibitem{K}
P. Kutas: Splitting quaternion algebras over quadratic number fields; accepted to Journal of Symbolic Computation (2018). \url{https://doi.org/10.1016/j.jsc.2018.08.002
}.

\bibitem{Lam}
T. Y. Lam: Introduction to quadratic forms over fields, Graduate Studies in
Mathematics, vol. 67, American Mathematical Society, Providence, RI, 2005.


\bibitem{Neukirch}
J. Neukirch: Algebraic Number Theory; Grundlehren der Mathematischen Wissenschaften, 322. Springer-Verlag, Berlin, 1999.
\bibitem{Rauter}

H. Rauter: \"Uber die Darstellbarkeit durch quadratische Formen im K\"orper der rationalen Funktionen einer Unbestimmten \"uber dem Restklassenk\"orper mod p, Phd thesis, Halle, 1926.

\bibitem{Rhin}

G. Rhin: R\'epartition modulo 1 dans un corps de s\'eries formelles sur un corps fini, Dissertationes Mathematicae (Rozprawy Matematyczne), No. 95., Mathematical Institute of the Polish Academy of Aciences, 1972. 

\bibitem{Ronyai}
 L. R\'onyai: Simple algebras are difficult; Proceedings of the 19th Annual ACM Symposium on the Theory of Computing, New York (1987), 398-408.


\bibitem{Shanks}

D. Shanks: Five Number Theoretic Algorithms; Proceedings of the Second Manitoba Conference on Numerical Mathematics (1973), 51-70.
\bibitem{Simon}
D. Simon: Quadratic equations in dimensions 4, 5 and more; preprint (2005). \url{http://web.archive.org/web/20061123185700/http://math.unicaen.fr/~simon/maths/Dim4.pdf}
\bibitem{Vigneras}
M-F. Vign\'eras: Arithm\'etique des Alg\`{e}bres de Quaternions,
Springer, LNM 800, 1980.
\bibitem{Wan}
D. Wan: Generators and irreducible polynomials over finite fields;
Mathematics of Computation 66 (1997), 1195-1212.
\bibitem{W}
C. van de Woestijne: Deterministic equation solving over finite fields, Ph.D. Thesis Uni-
versiteit Leiden, 2006.
\bibitem{Yun}
D. Y. Y. Yun: On square-free decomposition algorithms; Proceedings of the  1976 ACM Symposium on Symbolic and Algebraic Computation ISSAC , Yorktown Heights, NY
(1976), 26-35.


\end{thebibliography}
\end{document}